\theoremstyle{plain}
\newtheorem{thm}{Theorem}
\newtheorem{prop}{Proposition}[section]
\newtheorem{lem}[prop]{Lemma}
\newtheorem{cor}[prop]{Corollary}
\newtheorem{defi}[prop]{Definition}
\newtheorem{rmk}[prop]{Remark}
\newtheorem{alg}[prop]{Algorithm}
\newcommand {\R} {\mathbb{R}} \newcommand {\Z} {\mathbb{Z}}
 \newcommand {\N} {\mathbb{N}}
  \newcommand {\E} {\mathbb{E}}
 \newcommand {\F} {\mathcal{F}}
\newcommand {\D} {\Delta}
\newcommand{\vc}[1]{\mathbf{#1}}
\newcommand{\mt}[1]{\mathsf{#1}}
\DeclareMathOperator{\diag}{diag}
\DeclareMathOperator {\dist} {dist}
\DeclareMathOperator {\inte} {int}
\DeclareMathOperator {\argmin} {argmin}
\DeclareMathOperator {\Per} {Per}
\begin{document}

\title[]{On a probabilistic model for martensitic avalanches incorporating mechanical compatibility}

\author[F. Della Porta]{Francesco Della Porta}
\address{Max-Planck Institute for Mathematics in the Sciences, Inselstraße 22, 04103 Leipzig, Germany}
\email{DellaPorta@mis.mpg.de}
\author[A. Rüland]{Angkana Rüland}
\address{Ruprecht-Karls-Universität Heidelberg, Institut für Angewandte Mathematik, Im Neuenheimer Feld 294,
69120 Heidelberg, Germany}
\email{Angkana.Rueland@uni-heidelberg.de}
\author[J. Taylor]{Jamie M Taylor}
\address{Basque Center for Applied Mathematics, Mazarredo, 14, 48009 Bilbao Basque Country - Spain}
\email{taylor@bcamath.org}
\author[C. Zillinger]{Christian Zillinger}
\address{Karlsruhe Institute of Technology, Englerstraße 2,
76131 Karlsruhe, Germany}
\email{christian.zillinger@kit.edu}

\begin{abstract}
Building on the work in \cite{BCH15,CH18,TIVP17}, in this article we propose and study a simple, geometrically constrained, probabilistic algorithm geared towards capturing some aspects of the nucleation in shape-memory alloys. As a main novelty with respect to the algorithms in \cite{BCH15,CH18,TIVP17} we include \emph{mechanical compatibility}. The mechanical compatibility here is guaranteed by using \emph{convex integration building blocks} in the nucleation steps. We analytically investigate the algorithm's convergence and the solutions' regularity, viewing the latter as a measure for the fractality of the resulting microstructure. We complement our analysis with a numerical implemenation of the scheme and compare it to the numerical results in \cite{BCH15,CH18,TIVP17}.
\end{abstract}

\maketitle

\section{Introduction}

Shape-memory alloys are materials displaying a striking thermodynamical behaviour on the one hand and a rich mathematical structure on the other hand. Physically, these materials undergo a first-order, diffusionless, solid-solid phase transformation in which symmetry is reduced upon the passage from the high temperature phase, \emph{austenite}, to the low temperature phase, \emph{martensite}. This reduction of symmetry gives rise to various \emph{variants of martensite} in the low temperature regime. 

Mathematically, these materials have been successfully described within the calculus of variations by minimization problems of the form 
\begin{align}
\label{eq:min}
\mbox{ minimize } \int\limits_{\Omega} W(\nabla \vc y, \theta) d x,
\end{align}
for instance, with prescribed displacement boundary conditions \cite{B04, BJ92, B2, B3, B, M1}. Here $\Omega \subset \R^n$ is the reference configuration, $\theta: [0,\infty) \rightarrow [0,\infty)$ denotes temperature and $\vc y: \Omega \rightarrow \R^n$ is the deformation describing how the reference configuration is deformed. Defining $\R^{n\times n}_{+}$ to be the set of $n$ by $n$ matrices with positive determinant, the stored energy function $W: \R^{n\times n}_{+} \times [0,\infty) \rightarrow \R_+$ describes the energetic cost of a deformation at a given temperature. Physical requirements on it are \emph{frame indifference}, i.e. the fact that
\begin{align*}
W(\mt F) = W(\mt Q \mt F) \mbox{ for all } \mt F \in \R^{n \times n}_+, \ \mt Q \in SO(n),
\end{align*}
and \emph{material symmetry}, i.e. the fact that
\begin{align*}
W(\mt F) = W(\mt F \mt H) \mbox{ for all } \mt F \in \R^{n\times n}_+, \ \mt H \in \mathcal{P}, 
\end{align*}
where $\mathcal{P}$ denotes 
the (discrete) symmetry group which sends the austenite lattice into itself.

In particular, the zero set -- or physically the set of \emph{exactly stress-free strains} -- associated with $W$ is typically of the form

\begin{align*}
K(\theta) = \left\{ \begin{array}{ll}
\alpha(\theta) SO(n) Id & \mbox{ for } \theta > \theta_c, \\
\bigcup\limits_{m=1}^{N} SO(n)\mt U_j(\theta_c) \cup \alpha(\theta_c) SO(n) Id & \mbox{ for } \theta = \theta_c,\\
\alpha(\theta)\bigcup\limits_{m=1}^{N} SO(n)\mt U_j(\theta) & \mbox{ for } \theta \leq \theta_c.
\end{array} \right.
\end{align*}
Here the matrices $\mt U_j(\theta) \subset \R^{n\times n}_+$ are obtained through conjugation of $\mt U_1(\theta)$ by elements from $\mathcal{P}$ and represent the $N$ variants of martensite, while $\alpha(\theta):\R_+\to\R_+$ models the thermal expansion of the underlying lattice depending on temperature, with the convention that $\alpha(\theta_c)=1$. In order to study low energy configurations of \eqref{eq:min}, a common strategy \cite{B3, B, CDK, DM2, K1, K, R16, Rue16b, TS} is to first study exactly stress-free deformations by investigating the differential inclusion 
\begin{align}
\label{eq:diff_incl}
\nabla \vc y \in K(\theta).
\end{align}

While the study of the minimization problem \eqref{eq:min} has proved very successful and influential, e.g. in predicting interfaces between variants of martensite and scaling laws \cite{BJ92, KM1, KKO13, KK, C1, CO, CO1, Rue16b}, 
it is often the case that the dynamics of the phase transition play an important role in the formation of the complex microstructures observed in experiments (see e.g., \cite{SCDSJ13} and related comments in \cite{FDP1}). Indeed, observed microstructures are often the result of different smaller microstructures, nucleating at different  points of the domain and expanding. 
In order to preserve continuity of the deformation, and hence compatibility, these microstructures become finer and more complex when they encounter. However, they do not globally minimise an energy functional penalising interfaces between martensitic junctions. 
Such complex evolution has been observed both with optical microscopy, a common tool to analyse martensitic microstructure, and by phonon emission measurements, a second method based on the observation that every nucleation event is accompanied by an acoustic emission (see e.g., \cite{BBGVBZ}). In particular, both methods have thus been used for tracking the dynamics of nucleation phenomena. High time resolution measurements of the described type display strongly intermittend behaviour and the presence of ``avalanches'' \cite{PMV13, SKRSP09} with ``universal'', power law behaviour for central statistical quantities. 
Based on these and related observations, it has been the objective of several recent works to study simplified dynamic models of phase transformations in shape-memory alloys: On the one hand, the continuum mechanical models in \cite{FDP1,FDP2} seek to capture the evolution of the microstructures and the mechanical effects that the dynamics may have on them based on \emph{optical} microscopy observations. On the other hand, in parallel, simplified probabilistic, geometrically constrained dynamic models have been proposed and investigated in the literature \cite{BCH15, CH18, TIVP17} -- both in the mathematical and the physics community. The latter aim at predicting the above mentioned \emph{acoustic} observations and at deriving an improved understanding of the ``universal'', power law behavior for central statistical quantities. 
As shown in \cite[Figure 1]{TIVP17}, these  probabilistic, geometrically constrained models sometimes also successfully reflect the ``wild'', ``random'', irregular microstructures observed in optical microscopy.

In order to capture the avalanching phase transformation dynamics, the models proposed in \cite{BCH15, CH18, TIVP17} take into account two key features which are believed to be characteristic of many martensitic phase transformations:
\begin{itemize}
\item During the phase transformation a domain which has transformed from austenite to martensite does \emph{not} transform back (see also the moving mask hypotheses in \cite{FDP1}).
\item The nucleation domains are given by long (needle-like) domains (``plates'') which are oriented according to the  rank-one connections which are present between the wells (see e.g., the experimetal results in \cite{IHM13} and c.f. once more the moving mask hypotheses in \cite{FDP1}).
\end{itemize}
Based on this, the models in \cite{BCH15, CH18, TIVP17} roughly propose the following simplified, geometrically constrained nucleation mechanisms:
\begin{itemize}
\item[(i)] Choose a point randomly out of the sample/reference configuration and choose a direction (out of the possible rank-one directions, i.e. out of the directions of compatibility between austenite and a martensitic plate) randomly.
\item[(ii)] Nucleate a martensitic plate in the chosen direction through the chosen point until it hits another plate or the boundary of the sample.
\item[(iii)] Iterate this.
\end{itemize}

We emphasize that this leads to a purely ``scalar'' model which is not formulated on the level of the deformation gradients and, in particular, does \emph{not} take into account any compatibility of the associated deformation gradients beyond the fact that the nucleated plates should roughly be aligned with the rank-one directions.
Numerical simulations of these dynamics lead to highly fractal, self-organized, ``wild'' structures in the martensitic materials.
Based on the described dynamics, in their analysis and simulations in \cite{BCH15, CH18, TIVP17} the authors derive properties of the statistical distribution of martensitic plates and deduce self-similarity and power law behaviour in certain regimes. This may indicate that, in spite of the drastic simplifications, the geometrically imposed constraints could indeed provide insights into the experimentally measured universal exponents in the nucleation experiments.

It is the objective of this article, to propose and investigate an \emph{intermediate} model, capturing both the random, geometrically constrained, self-organizing behaviour discussed in \cite{BCH15, CH18, TIVP17} and including the key mechanical aspect of \emph{compatibility}. In the previous works the latter had only been taken into account in terms of fixing the orientation of the martensitic plate and not in terms of the associated deformation. As in \cite{BCH15, CH18, TIVP17} we are also interested in studying the universality properties of solutions. In contrast to these results we however focus on the \emph{regularity of solutions} as a measurement for this and the ``wildness'' of the microstructure, and interpret regularity as the main quantity from which statistical properties could be deduced (see Section \ref{sec:turb} below).
We further link this to our recent investigation of deterministic ``wild'' microstructures obtained through the method of convex integration.

\subsection{The model and the main results}

In the sequel, as a model setting, we focus on the geometrically non-linear, two-dimensional two-well problem. Extensions to other models would not pose any difficulties as pointed out in our discussion below. Fixing temperature below the transformation temperature, we thus consider
\begin{equation}
\label{defK_intro}
K = SO(2)\mt F_0\cup SO(2)\mt F_0^{-1}
\end{equation}
where $\mt F_0,\mt F_0^{-1}\in\R^{2\times2}$ are respectively given by
\begin{align}
\label{eq:matrices_intro}
{\mt F}_0 = \left[\begin{array}{ ccc } 1 & \gamma \\ 0 & 1  \end{array}\right],\qquad
{\mt F}_0^{-1} = \left[\begin{array}{ ccc } 1 & -\gamma \\ 0 & 1  \end{array}\right],
\end{align}
and $\gamma>0.$ 

\begin{figure}
\includegraphics[width = 0.5 \textwidth,page=1]{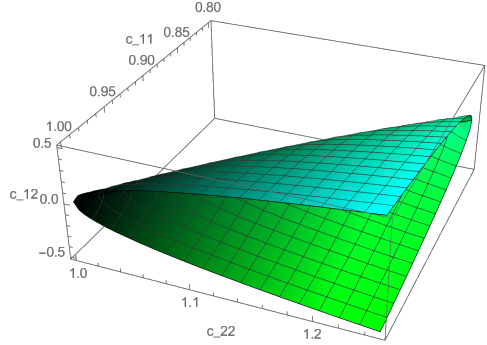}
\caption{The quasiconvex hull $K^{qc}$, i.e. the set of all macroscopically realizable deformations, associated with the set $K$, depicted in Cauchy-Green space, see Section \ref{sec:models} for more detailed definitions. Here the wells from \eqref{defK_intro} correspond to the two corners of the paraboloid (and are coloured cyan and green, respectively). All other matrices in the depicted set are obtained as Cauchy-Green tensors of first or second order laminates (corresponding to the boundaries of the paraboloid and its relative interior, respectively). The colour coding here is the colour coding which is used for vertical twins (see the explanations below). In order to illustrate the difference between horizontal and vertical twins (in the sense of \cite{DPR}), we use a second colour scheme (see Figure \ref{fig:vert}).}
\label{fig:Kqc}
\end{figure}

In the present work we propose dynamics which are strongly inspired by the ones in \cite{BCH15, CH18, TIVP17} but which take \emph{mechanical compatibility} into account. More precisely, essentially our nucleation algorithms still follow the steps (i)-(iii) from above, with the main difference that condition (ii) is now formulated on the level of the full deformation gradients (instead of the scalar order parameters from \cite{CH18, TIVP17}). Therefore, the plates which are nucleated in (ii) are now prescribed in a \emph{compatible} way (in the sense of not creating any stresses). This is achieved by relying on convex integration building blocks which are exactly stress-free solutions to the differential inclusion \eqref{eq:diff_incl} at a fixed temperature (in our case below the critical temperature) and with prescribed displacement boundary conditions. As in \cite{BCH15, CH18, TIVP17} for the setting of the two-well problem this gives rise to two specific orientations of the martensitic plates which are however now \emph{exact solutions} to the differential inclusion. In the infinite iteration/time limit, we thus obtain exactly stress-free solutions resembling those of \cite{BCH15, CH18, TIVP17} which now however are defined on the level of the deformations and in particular include compatibility and (up to a set of measure zero) fully transform $\Omega$. The precise algorithms used in our dynamics are described in Algorithms \ref{ModelA} and \ref{ModelB} in Section \ref{sec:models} below. 
Let us remark that such a behaviour reminds of experimental observations in TiNbAl (see e.g., \cite{Inamura}) where, after the phase transition, it is possible to observe different colonies of ``wild'' microstructures.

As in \cite{BCH15, CH18, TIVP17}, we seek to show that these dynamics give rise to ``power-law'' behaviour and self-organized structures in a probabilistic sense (see Section \ref{sec:sim_impl} for some numerical evaluations of the length scale statistics). As already indicated above, we do not aim at proving direct power-law distributions for the present lengths scales but view the \emph{regularity} of solutions as a proxy for this which encodes important statistical information (e.g. in terms of the solutions' heavy tailed Fourier distribution etc).

\begin{figure}[t]
\includegraphics[width=0.4 \textwidth]{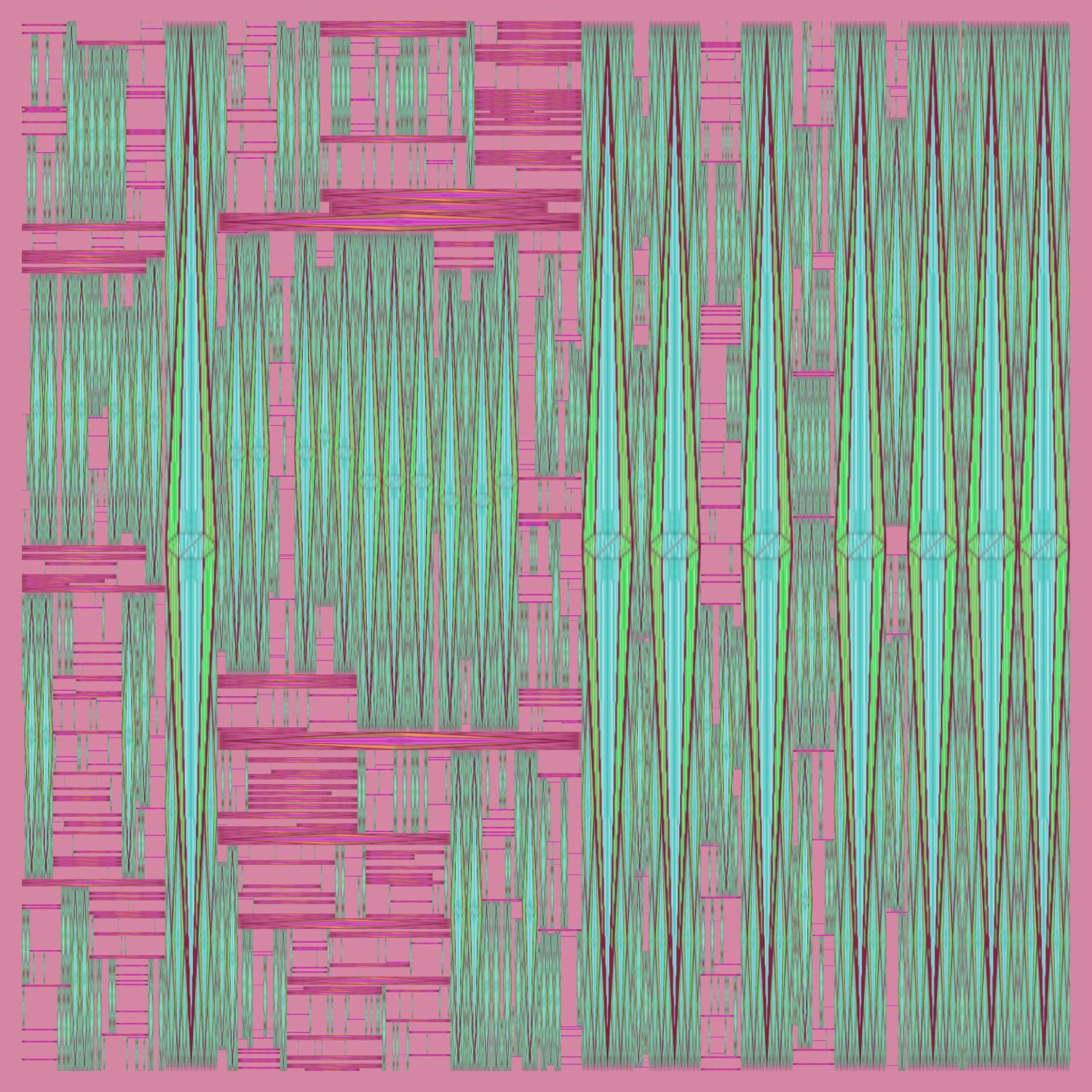}
\label{fig:ModelA2}
\caption{The radom convex integration solutions produced by (a variant) of our algorithms (the picture here is generated by means of the modifications explained in Section \ref{sec:sim_strat}). The colour coding uses cyan and green for vertical and magenta and orange for horizontal twins. In addition to the fractal behavior originating from the random (greedy type) covering which is observed in \cite{BCH15, CH18, TIVP17}, we here have a second source of fractality originating from the use of the convex integration building blocks within the random rectangle covering (see Section \ref{sec:models} below for more comments on this).}
\end{figure}

Our main analytical result for these dynamics is summarized in the following theorem:

\begin{thm}
\label{thm:main}
Let $K$ be as in \eqref{defK_intro}--\eqref{eq:matrices_intro} and let $\tilde\Omega =(0,1)^2 \subset \R^2$ 
Let $\{\vc y_k\}$ denote the sequences obtained in Algorithms \ref{ModelA} or \ref{ModelB} (defined in Section \ref{sec:models}) and let $\mu$ denote the corresponding probability measure (constructed in detail in Section \ref{sec:prob2}). 
Then there exists $\theta_{A,B} >0$ such that for all $s\in (0,1)$, $p\in (1,\infty)$ with $sp < \theta_{A,B}$ and for all $\mt M \in \inte K^{qc}$ and $\mu$-almost every sequence $\{\vc y_k\}$ there exists a deformation $\vc y:\Omega \rightarrow \R^2$ such that $\vc y_k \rightarrow \vc y$ in $W^{1,1}(\Omega)$ and
\begin{align}
\label{eq:diff_incl1}
\begin{split}
\nabla \vc y &\in K \mbox{ a.e. in } \tilde\Omega, \\
\vc y & = \mt M \vc x \mbox{ on } \partial \tilde\Omega,
\end{split}
\end{align}
with $\vc y \in W^{1,\infty}(\tilde\Omega;\R^{2})\cap W^{1+s,p}(\tilde\Omega;\R^{2})$.
\end{thm}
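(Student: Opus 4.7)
The plan is to combine the classical convex integration machinery with a greedy-covering probabilistic estimate and to then derive the fractional regularity by interpolating an $L^\infty$ bound against a controlled $BV$-growth. The $W^{1,1}$-limit and the differential inclusion come out of a randomised version of the standard staircase iteration, whereas the $W^{1+s,p}$-bound rests on balancing $BV$-growth against random measure decay.

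First I would establish $W^{1,1}$-convergence and the inclusion $\nabla\vc y\in K$. At each step $k$ the algorithm modifies $\vc y_{k-1}$ only on a random rectangle $R_k$ contained in the residual set
\[
 B_{k-1}=\{x\in\tilde\Omega:\nabla \vc y_{k-1}(x)\notin K\},
\]
replacing an affine piece (with gradient in $K^{qc}$) by a compatible convex integration building block whose gradient is in $K$ on a definite proportion of $R_k$. Because $R_k$ is selected via a greedy rule covering a uniform positive fraction of $B_{k-1}$, one obtains a conditional contraction
\[
 \E[|B_k|\mid \mathcal F_{k-1}]\leq \lambda\,|B_{k-1}|
\]
for some $\lambda\in(0,1)$ independent of $k$. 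Iteration and Borel--Cantelli yield $|B_k|\to 0$ $\mu$-almost surely. Since $\nabla \vc y_k$ takes values in the compact set $K^{qc}$ and $\vc y_k-\vc y_{k-1}$ is supported in $R_k\subset B_{k-1}$, one has $\|\vc y_k-\vc y_{k-1}\|_{W^{1,1}}\leq C|B_{k-1}|$, which is $\mu$-a.s.\ summable. The resulting limit $\vc y$ is in $W^{1,\infty}$ thanks to the $L^\infty$ control on the gradients, it inherits the affine boundary datum by construction of the building blocks, and it satisfies $\nabla\vc y\in K$ a.e.\ by a.e.\ convergence of $\nabla\vc y_k$.

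The main obstacle is the $W^{1+s,p}$ regularity. Here I would combine a $BV$-inflation estimate for $\nabla\vc y_k$ with the classical embedding of $BV\cap L^\infty$ into $W^{s,p}$ for $sp<1$. Each inserted convex integration building block raises $\|\nabla\vc y_k\|_{BV(\tilde\Omega)}$ by a controlled factor depending on the laminate aspect ratios, yielding $\|\nabla\vc y_k\|_{BV}\leq A^k$ for some geometric constant $A>1$ intrinsic to the construction, while the updates have expected support measure bounded by $\lambda^{k-1}$. Interpolating the $W^{s,p}$ seminorm on the small support $R_k$ of $\nabla\vc y_k-\nabla\vc y_{k-1}$ between the uniform $L^\infty$ bound and the $BV$-growth then produces an estimate of the schematic form
\[
 \E\|\vc y_k-\vc y_{k-1}\|_{W^{1+s,p}}^p\leq C\,A^{sp\,k}\,\lambda^{(1-sp)k},
\]
which is summable whenever $sp$ lies below the threshold $\theta_{A,B}:=\log(1/\lambda)/\log(A/\lambda)>0$. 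For such $(s,p)$ the increments are summable in $W^{1+s,p}$, the sequence is $\mu$-a.s.\ Cauchy there (via a Chebyshev/Borel--Cantelli upgrade of the expectation bound), and the limit coincides with the $W^{1,1}$-limit already constructed.

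The principal technical difficulty lies in quantitatively balancing the two geometric rates $A$ and $\lambda$: one has to track explicitly the $BV$-inflation produced by the nested, rank-one oriented laminates constituting the building blocks and verify that the random greedy covering shrinks $B_k$ fast enough that $\log(1/\lambda)$ dominates a non-trivial fraction of $\log(A/\lambda)$. Making this precise for the specific geometries of Algorithms \ref{ModelA} and \ref{ModelB} is the quantitative heart of the argument; once the two rates are isolated, the remainder is structural.
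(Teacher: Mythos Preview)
Your high-level strategy matches the paper's almost exactly: geometric decay in expectation of the residual measure (Propositions~\ref{Conv:A}, \ref{Conv:B}), a competing geometric growth factor, interpolation to produce an estimate of the shape $\E\|\nabla \vc y_k-\nabla \vc y_{k+1}\|_{W^{s,p}}^p\leq C(A^{sp}\lambda^{1-sp})^k$ (Theorems~\ref{RegA}, \ref{thm:ModelB}), and then Chebyshev plus Borel--Cantelli to upgrade to $\mu$-almost sure convergence (Theorem~\ref{thm:almostsure}). The threshold formula you write is exactly what emerges.

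There is, however, one genuine misconception in how you picture the building blocks, and it would break your argument as literally stated. You model each inserted block as a partial laminate step that puts $\nabla \vc y$ into $K$ only on ``a definite proportion of $R_k$'' and that raises the global $BV$ norm by a bounded factor, giving $\|\nabla \vc y_k\|_{BV}\leq A^k$. In the paper's algorithms the blocks $\vc z_k^j$ are the \emph{full} convex integration solutions supplied by Theorem~\ref{ThmCI}: they satisfy $\nabla \vc z_k^j\in K$ a.e.\ on \emph{all} of $\mathcal{B}_k^j$, so the residual set is exactly the uncovered region $\mathcal{V}_k$. The price is that these solutions are themselves limits of infinitely many laminate steps and need not be $BV$ at all; your bound $\|\nabla \vc y_k\|_{BV}\leq A^k$ is therefore not available, and the interpolation you propose between a global $L^\infty$ bound and a global $BV$ bound cannot be carried out.

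The paper circumvents this by never looking at a global $BV$ norm of $\nabla \vc y_k$. Instead it pulls the $W^{s,p}$ bound on each building block directly from Theorem~\ref{ThmCI} (which performs the $BV$/$L^1$ interpolation \emph{internally}, over the infinite convex integration iteration), yielding $\|\nabla \vc y_{k+1}\|_{W^{s,p}(\mathcal{B}_k^j)}\leq C\,\Per(\mathcal{B}_k^j)^{s}|\mathcal{B}_k^j|^{1/p-s}$. The growth factor you call $A$ then arises not from $BV$ inflation of the deformation but from the \emph{number} of pieces $\mathcal{B}_k^j$ in the outer random covering ($2^k$ in Model~A) together with a H\"older step summing over $j$, plus an explicit estimate of the off-diagonal Gagliardo term $\int_{\mathcal{B}_k^j}\int_{(\mathcal{B}_k^j)^c}$ controlled by $\Per(\mathcal{B}_k^j)^{sp}|\mathcal{B}_k^j|^{1-sp}$. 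Once you make this correction---use the building blocks as $W^{s,p}$ black boxes rather than as single $BV$-controlled steps---your outline coincides with the paper's proof.
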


\begin{rmk}
It would be possible to extend the result to domains $\tilde{\Omega} \subset \R^2$ which are more complicated (e.g. domains which can be written as controlled (in-)finite unions of rectangles). In order to avoid dealing with the associated issues and as the domain geometry does not constitute our main focus in this article, we restrict to the above model setting in which $\tilde{\Omega} = (0,1)^2$.
\end{rmk}

We emphasize that essentially all sequences (in terms of $\mu$) produced in our dynamics in the infinite iteration/long time limit lead to exactly stress-free solutions of the differential inclusion. Moreover, they have a certain fractality (and are in this sense self-organized and not completely random) as encoded in the higher Sobolev regularity result with $\nabla \vc y \in W^{s,p}(\tilde{\Omega}, \R^2)$ (see also the remarks below).
Numerical evaluations of the algorithms are presented in Section \ref{sec:sim}, in which we also discuss the lengths scale statistics involved in the solutions. We hope that this may eventually allow for comparisons with the measured length scale (and avalanche) distributions in experimental settings.

\subsection{Context}
Self-organized, critical systems and cellular automata have attracted substantial interest in systems undergoing phase transformations (see \cite{BTW87} and the large amount of literature building on this). Also in the mechanical literature there have been substantial endeavours towards understanding this more precisely, see for instance \cite{PTZ08,PTZ07, PTZ09,Z15, PTZT16, BUZZ16} and the references therein. In the context of martensitic phase transformations and self-similarity we highlight the early works \cite{RSS95, RSS95b, PLKK97} in which random, geometrically constrained models had been proposed and analyzed in the study of self-organized structures in martensitic phase transformations. Already in these, the emergence of self-similar, fractal microstructures was observed.

The models proposed in this article follow the line of ideas introduced in \cite{RSS95, RSS95b, PLKK97, BCH15, CH18, TIVP17}. It is our main objective to explore how simplified dynamics may lead to universal power law behaviour in nucleation processes as observed, for instance, through acoustic emission measurements. While previous models did not take into account mechanical \emph{compatibility} conditions, by connecting the probabilistic models from above with convex integration building blocks, our model does take this into account. 
In particular it allows us to link the ``self-organized'' model dynamics from \cite{BCH15, CH18, TIVP17}, convex integration schemes \cite{MS1,MSyl, MS} -- which have a natural dynamic interpretation -- and the recently obtained higher Sobolev regularity results for convex integration solutions \cite{RZZ16, RZZ18, RTZ19, DPR}. It is our hope that with further simulations, experiments and analytical investigations these connections can be strengthened and that eventually the obtained regularity exponents can be compared to the observed universal exponents of the (length scale) statistics in the experiments. From a mathematically point of view, the connection of the proposed model and ``random'', average convex integration algorithms in which only the \emph{average} instead of \emph{tailor-made packings} are considered also seems to be of independent interest (we also refer to \cite{K1} and \cite{Pompe1} for random walk interpretations of convex integration procedures). We emphasize that our model should be viewed as a \emph{hybrid} model connecting the ideas from \cite{PTZ08,PTZ07, PTZ09,Z15, PTZT16, BUZZ16} and from convex integration with higher Sobolev regularity from \cite{RZZ16, RZZ18, RTZ19, DPR}. For the sake of mathematical simplicity in this first treatment of probabilistic models involving convex integration we separate the two ingredients, the probabilistic point of view and the convex integration scheme as much as possible.
Building on this, as next steps, possibly slightly more natural algorithms could include a simultaneous iteration of the convex integration schemes and the random choice of the nucleation spots and the building block directions. These (possibly energetically more justified) models however lead to significantly more complicated analytical problems. Seeking to introduce a coupling between the ideas of convex integration (and thus of \emph{compatibility}) and the random, geometrically contrained (and thus \emph{self-organized}) structures from \cite{PTZ08,PTZ07, PTZ09,Z15, PTZT16, BUZZ16}, we here focus on the simplest possible setting, but plan to study the indicated, more complex structures in future projects.

\subsection{Regularity, self-similarity and power law length scale distributions}
\label{sec:turb}
Last but not least, we seek to heuristically connect the regularity of solutions to \eqref{eq:diff_incl1} and the power-law behaviour of statistical quantities such as length scale distributions. Precise relations between the (maximal) regularity of solutions and scaling laws are deduced in \cite{RTZ19}. On an $L^2$ based level the higher $H^s$ Sobolev regularity of the deformation gradient $\nabla \vc y$ corresponds to the finiteness of the integral
\begin{align}
\label{eq:FT}
\int\limits_{\R^2} |\vc k|^{2 + 2s}|\F \vc y ( \vc k)|^2 d \vc k.
\end{align}
In particular this implies that $\F \vc y( \vc k)$ (the Fourier transform of $\vc y$ at $\vc k$) necessarily has a decay rate that (in an average sense) is determined by the Sobolev regularity of $\vc y$. Assuming that $\F \vc y( \vc k)$ is of a power law distribution, i.e. that $|\F \vc y( \vc k)| \sim | \vc k|^{-\alpha/2}$ for $|\vc k|\geq 1$ and some $\alpha \in \R$, the finiteness condition for \eqref{eq:FT} would imply a power law behavior of the length scales involved in $\F \vc y( \vc k)$ of the order at least $\alpha  >4 +2s$. Combined with scaling laws for the associated elastic and surfac energies, one would also be able to provide upper bounds on $\alpha$ as explained in \cite{RTZ19}. In this sense, the Sobolev regularity captures the degree of self-organization in a precise sense.
Similar, Fourier based considerations (for two-point functions) as a measure of the fractality or degree of self-organization of a solution can be found in \cite{PLKK97}.

\subsection{Outline of the remainder of the article}
The remainder of the article is organized as follows: After briefly collecting our most important notation in Section \ref{sec:notation}, we present our models in Section \ref{sec:models}. In order to fix the precise setting these are complemented with the precise probabilistic set-up in Section \ref{sec:prob2}. A first convergence result for our algorithms is discussed in Section \ref{sec:conv-algor}. In Sections \ref{sec:reg_sol} and \ref{sec:prob} the higher Sobolev regularity and the $\mu$-almost everywhere convergence of the algorithms is studied. In Section \ref{sec:tail}, as our final analytic section, we explain how (for a slight variant) of Algorithm \ref{ModelA} it is possible to dispose of the non-degeneracy condition in the algorithms and to replace this by appropriate ``tail estimates''. Last but not least, we provide several illustrations of the numerical implementation of our algorithms and their statistics. We hope that these are of use in eventually comparing our results with experimental data.

\section{Notation}
\label{sec:notation}
For the convenience of the reader, we collect some of the notation which will be used in the following sections.
We first collect the central notation from the Algorithms \ref{ModelA} and \ref{ModelB} at step $k$:
\begin{itemize}
\item $\mathcal{V}_k$ -- this is the still not transformed part of the domain $(0,1)^2$ in the iteration step $k$, it consists of a finite union of open rectangles,
\item $\mathcal{C}(\mathcal{V}_k)$ -- this is the set of connected components of $\mathcal{V}_k$,
\item $\vc p_k$ -- this is the randomly chosen point in the algorithms,
\item $\mathcal{C}(\mathcal{V}_k,\vc p_k)$ -- is the connected component of $\mathcal{V}_k$ containing $\vc p_k	$,
\item $d_k$ -- this is the randomly chosen orientation in the algorithms,
\item $\vc y_k$ -- this is the current deformation,
\item $\vc z_k^j$ -- this is the replacement building block given by Theorem \ref{ThmCI},
\item $\mathcal{B}_k$, $\mathcal{B}_k^j$ -- these are the sets on which the current deformation is replaced by a deformation which is in the wells.
\end{itemize}
In our discussion of the probabilistic background we use the following notation:
\begin{itemize}
\item  $\Omega^k := \Omega \times \dots \times \Omega$ ($k$-times) -- the $k$-fold Cartesian product of a set $\Omega \subset \R^2$,
\item $\mathcal{B}(\Omega)$ -- the Borel sets on $\Omega$,
\item $\mu$, $\mu_k$, $\rho_k$ -- the measures constructed in Lemmas \ref{lem:finitekmeasures} and \ref{lem:extension},
\item $\E$, $\E_k$ -- expectations with respect to the measures $\mu$ and $\mu_k$, by construction $\mu$ is an extension of $\mu_k$, so $\E$ reduces to $\E_k$ for finite iterations of our algorithms,
\item $|A|$ -- Lebesgue measure of a Lebesgue measurable subset $ A \subset \R^n$,
\item $\mathcal{D}(D)$ -- the descendents of a set $D$, see Definition \ref{def descendants}.
\end{itemize}

\section{The models}
\label{sec:models}

As a model setting, we consider the energy wells determined by the strains $\mt F_0$ and $\mt F_0^{-1}$ from \eqref{defK_intro} and \eqref{eq:matrices_intro}.
We remark that, as shown in \cite[Sec. 5]{BJ92}, given two wells with two rank-one connections (and the physically natural condition of equal determinant), one can always reduce the problem to our case via an affine change of variables. 

\begin{figure}[t]
  \centering
  \includegraphics[width=0.45\linewidth,page=2]{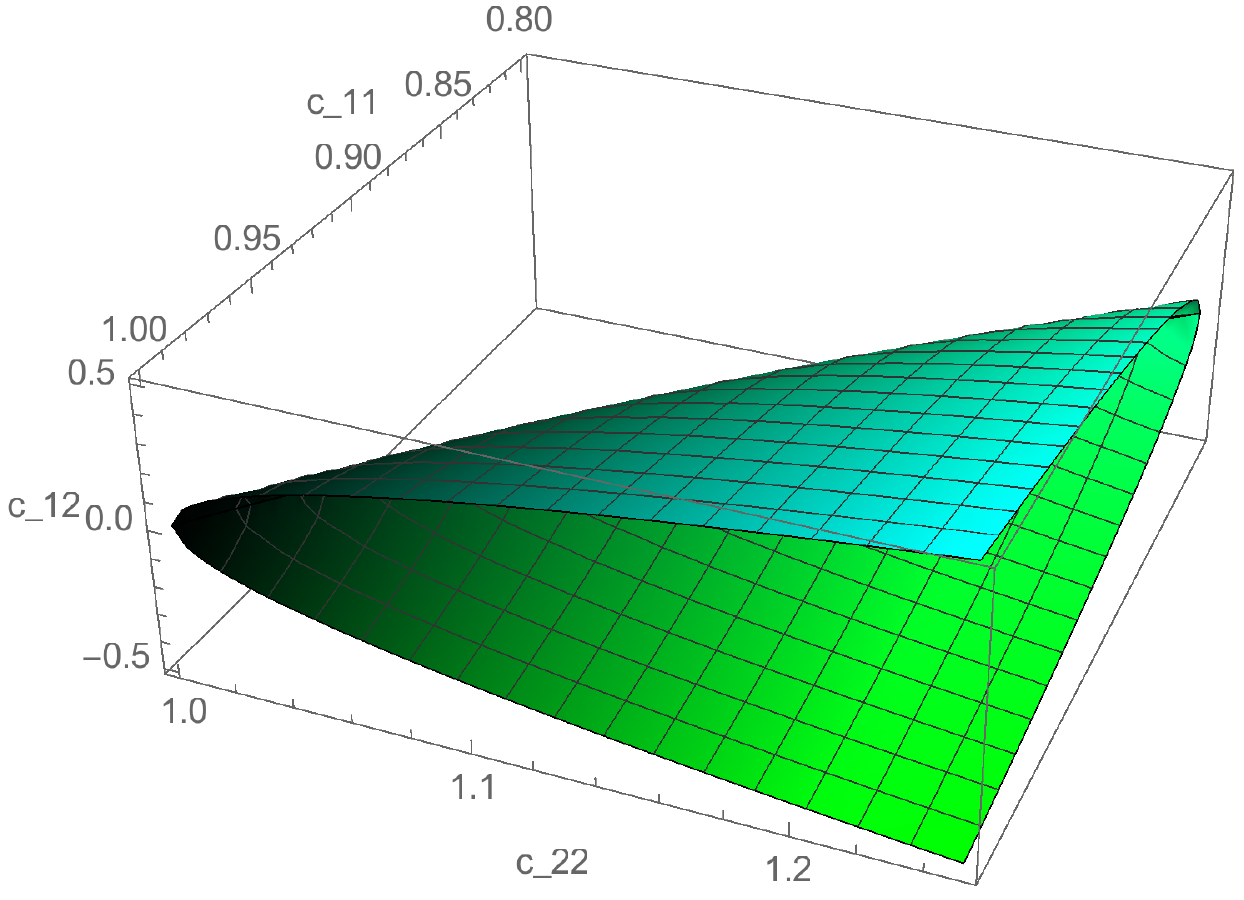}
  \includegraphics[width=0.45\linewidth,page=3]{figures.pdf}
   \caption{
	Representation of the iterative steps of the convex iteration algorithm in \cite{DPR} with boundary conditions   
  $M\approx \diag(0.939, 1.064)$, $\gamma = 0.5$
   are shown in terms of the
     Cauchy-Green tensors. Here the coordinates are $ x=(\mt M^T \mt M)_{11}\in(0,1),
     y=(\mt M^T \mt M)_{22}\in(0,1+\gamma^2)$
     for the plane directions and $z=(\mt M^T \mt M)_{12}=(\mt M^T \mt M)_{21}= \pm \sqrt{1-xy}$ for the vertical direction. The closer the steps are to $\mt F_0,\mt F_0^{-1}$ (that is to the points with $(x,y) = (1,1+\gamma^2)$) the closer the algorithm is to convergence.
     Here we use a magenta-orange-black color
     coding for the horizontal replacement (horizontal rectangle) and cyan-green-black for the
     vertical one (vertical rectangle).}
  \label{fig:vert}
\end{figure}

\begin{figure}[th]
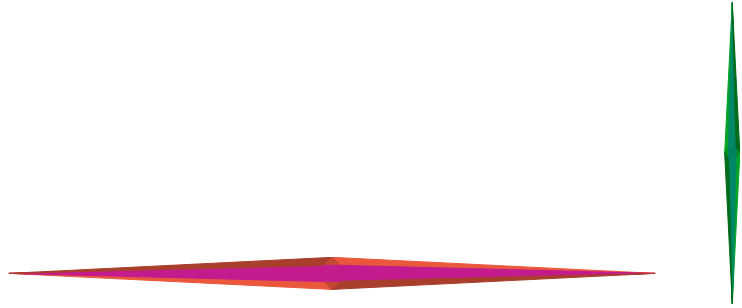

  \centering
  \includegraphics[width=0.7\linewidth,page=4]{figures.pdf}
  \hspace{0.5 cm}
  \includegraphics[height=4cm,page=5]{figures.pdf}
  \caption{Given the matrix decomposition, we employ ``rhombi-construction'' on a
    diamond-shaped domain. The green colour coding of Figure \ref{fig:vert} right corresponds to vertical twins, the magenta colour coding of Figure \ref{fig:vert} left to horizontal twins. In the notation of \cite{DPR} these correspond to the coordinates $\mt F_1$ and $\mt F_2$, respectively. }
  \label{fig:diamond}
\end{figure}

Below we are going to rely on the following theorem:
\begin{thm}[Thm. 1, \cite{DPR}]
\label{ThmCI}
Let $K$ be as in \eqref{defK_intro}--\eqref{eq:matrices_intro}. Let $\tilde\Omega \subset \R^2$ satisfy 
\begin{equation}
\label{eqDomain2}
\tag{D}
\text{\parbox{3.8in}{\centering $\tilde\Omega$ is open, connected, and can be covered (up to a set of measure zero) by finitely many open disjoint triangles.}}
\end{equation}
Then there exists $\theta_0 >0$ (independent of $\tilde{\Omega}$) such that for all $s\in (0,1)$, $p\in (1,\infty)$ with $sp < \theta_0$ and for all $\mt M \in \inte K^{qc}$ there exists a deformation $\vc u:\tilde{\Omega} \rightarrow \R^2$ such that
\begin{align*}
\nabla \vc u &\in K \mbox{ a.e. in } \tilde\Omega, \\
\vc u & = \mt M \vc x \mbox{ on } \partial \tilde\Omega,\\ 
\vc u &\in W^{1,\infty}(\tilde\Omega;\R^{2})\cap W^{1+s,p}(\tilde\Omega;\R^{2}).
\end{align*}
\end{thm}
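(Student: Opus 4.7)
The plan is to construct $\vc u$ by an iterative convex integration scheme in the spirit of \cite{MS1, MSyl, RZZ16, DPR}, starting from the affine map $\vc u_0 = \mt M \vc x$ and successively modifying it on triangular subregions through a carefully chosen building block so that the set on which the gradient is not yet in $K$ shrinks by a definite geometric factor while the $W^{1+s,p}$ cost is controlled via a $BV$-based interpolation.

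First I would isolate the single-step building block. Given $\mt N \in \inte K^{qc}$ and an open triangle $T$, I would produce a piecewise affine map $\vc v_{T,\mt N}: T \to \R^2$ with $\vc v_{T,\mt N} = \mt N \vc x$ on $\partial T$ such that $T$ decomposes up to a nullset into finitely many subtriangles on which $\nabla \vc v_{T,\mt N}$ is constant, each having its gradient either in $K$ or again in $\inte K^{qc}$, with the ``bad'' subtriangles carrying total measure at most $\eta(\mt N)\,|T|$ for some $\eta(\mt N) \in (0,1)$, and with $\|\nabla \vc v_{T,\mt N}\|_{L^\infty}$ and the diameter-normalized $BV$ seminorm controlled uniformly on compact subsets of $\inte K^{qc}$. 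The geometry is provided by the rhombi construction of Figure \ref{fig:diamond}: exploiting the two rank-one connections of $K$, one writes $\mt N$ as a second-order laminate whose non-$K$ components lie strictly closer to $K$ in Cauchy--Green space, and realizes this laminate on a rhombus inscribed in $T$ whose complementary triangles can be filled in so as to match the affine boundary data without leaving $\inte K^{qc}$.

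Next I would iterate. Starting from the triangulation of $\tilde\Omega$ provided by \eqref{eqDomain2}, I would apply the building block on every triangle to obtain $\vc u_1$, then on every bad subtriangle of $\vc u_1$ apply the building block again to produce $\vc u_2$, and so on. Since the residual bad gradients remain in a compact subset of $\inte K^{qc}$, the contraction factor $\eta$ can be chosen uniformly across iterations, giving $|\mathcal{B}_n| \leq \eta^n |\tilde\Omega|$ for the bad set $\mathcal{B}_n$ at stage $n$. Together with a uniform $L^\infty$ bound on $\nabla \vc u_n$, this yields a Cauchy sequence in $W^{1,q}$ for every $q < \infty$, a limit $\vc u \in W^{1,\infty}(\tilde\Omega;\R^2)$ with $\nabla \vc u \in K$ almost everywhere, and the boundary condition $\vc u = \mt M \vc x$ preserved because each building block matches its outer affine data on $\partial T$.

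The hard part is the $W^{1+s,p}$ estimate. For each $n$, the increment $\nabla \vc u_{n+1} - \nabla \vc u_n$ is supported in $\mathcal{B}_n$, uniformly bounded, and piecewise constant with jump set controlled by $\Per(\mathcal{B}_n) \lesssim \eta^n |\tilde\Omega|/\ell_n$, where $\ell_n \sim r^n$ is the minimum diameter of parent triangles at stage $n$. Combining the embedding $BV \hookrightarrow W^{s,1}$ with interpolation against $L^\infty$ yields an increment estimate of the schematic form
\begin{align*}
\|\vc u_{n+1} - \vc u_n\|_{W^{1+s,p}} \lesssim |\mathcal{B}_n|^{(1-sp)/p}\,\Per(\mathcal{B}_n)^{s},
\end{align*}
and inserting the geometric decay produces a series whose $n$-th term is a fixed power of $\eta^{1/p}r^{-s}$; this ratio is strictly less than one precisely when $sp$ lies below an explicit threshold $\theta_0 > 0$ depending only on $\eta$ and $r$. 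Since the same building block is invoked at every stage regardless of the initial triangulation, $\theta_0$ is independent of $\tilde\Omega$. The essential technical obstacle---and the main content of \cite{DPR}---is realizing the rhombi construction with $\eta$ and $r$ \emph{simultaneously} uniform over compact subsets of $\inte K^{qc}$, which amounts to showing that the laminate endpoints never degenerate towards $\partial K^{qc}$ under iteration.
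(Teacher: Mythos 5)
You should first note that the paper does not prove this statement at all: Theorem \ref{ThmCI} is quoted verbatim as Theorem 1 of \cite{DPR} and used as a black box (the only traces of its proof that appear here are the description of the rhombi-constructions around Figures \ref{fig:vert}--\ref{fig:buildingblocks} and the imported estimates \eqref{eq:inter_old1}--\eqref{eq:inter_old2} in the proof of Theorem \ref{RegA}). Your outline does reproduce the broad strategy of \cite{DPR} and of the earlier works \cite{MS1,RZZ16}: an in-approximation/laminate replacement on triangles realized by rhombi, geometric decay of the not-yet-transformed volume, a uniform $L^\infty$ bound giving convergence to an exact solution with the affine boundary data, and a $BV$--$L^\infty$--$L^1$ interpolation of the increments whose geometric series converges precisely when $sp$ is below an explicit threshold. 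That is consistent with how the present paper later uses the result.

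However, as a proof your sketch has a genuine gap at its quantitative core. The assertion that ``the residual bad gradients remain in a compact subset of $\inte K^{qc}$, so the contraction factor $\eta$ can be chosen uniformly across iterations'' cannot be right as stated: for the scheme to converge to $\nabla \vc u \in K$ a.e., the gradients on the bad set must approach $K \subset \partial K^{qc}$, and hence leave every fixed compact subset of $\inte K^{qc}$. The whole difficulty -- and the actual content of \cite{DPR} (and of \cite{RZZ16,RZZ18} in related settings) -- is the \emph{quantitative degeneration} of the building blocks as the gradient approaches the wells: the laminate volume fractions become increasingly skewed, the needle-like domains become increasingly anisotropic, and one must track how the per-step volume gain, the perimeter/$BV$ growth (the analogue of $\|\nabla \vc u_k\|_{BV}\leq C2^k\Per(\Omega)$ in \eqref{eq:inter_old2}), and the $L^1$ smallness $\|\nabla \vc u_k\|_{L^1}\leq Cc^k|\Omega|$ survive this degeneration simultaneously, with constants independent of the triangle and hence of $\tilde\Omega$. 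Your closing remark that one must show ``the laminate endpoints never degenerate towards $\partial K^{qc}$'' is in tension with convergence itself; the correct formulation is that the degeneration must be controlled at a summable rate. In addition, for the geometrically \emph{nonlinear} set \eqref{defK_intro} the $SO(2)$ invariance makes the single-step replacement (your first paragraph) considerably more delicate than the piecewise-affine second-order laminate picture suggests. So the proposal is a faithful high-level roadmap, but the steps you defer are exactly the ones that constitute the proof, and one of your uniformity claims, taken literally, is false.
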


We will use the solutions from Theorem \ref{ThmCI} as building blocks for our
``plates'' (see Step (ii) of the probabilistic nucleation algorithms explained in the introduction, see also Figure \ref{fig:buildingblocks} for an
illustration of a building block for $\mt M\approx\begin{pmatrix}
    0.939 & 0\\
    0 & 1.064
  \end{pmatrix}$ and $\gamma=0.5$).

\begin{figure}[htb]
  \centering
  \includegraphics[width=0.8\linewidth]{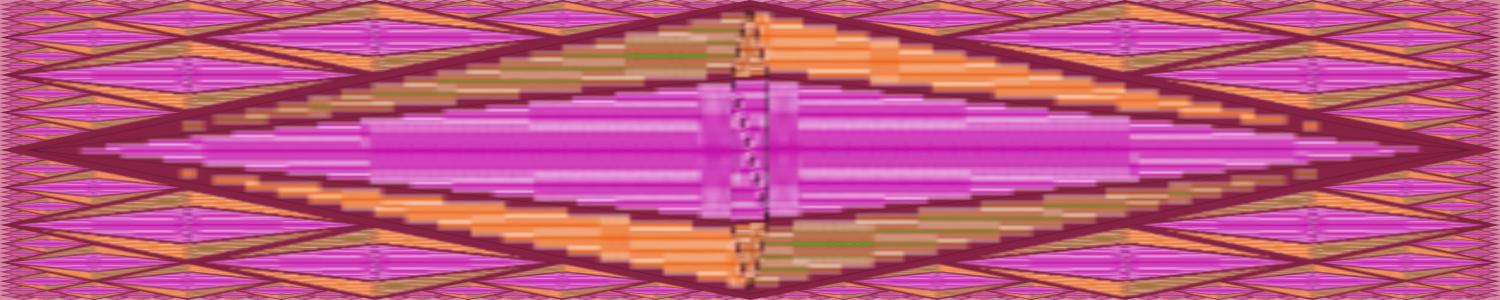}
  \caption{We use Theorem \ref{ThmCI} to construct building blocks in the shape of
    rectangles (which themselves are covered by rhombi-constructions in the form of diamonds, see Figure \ref{fig:diamond}). Different colours here correspond to different values of $\nabla
    \vc u$ with the colour coding given as in Figure \ref{fig:vert}.  
    For horizontal rectangles in the algorithms below we always begin with a decomposition along horizontal laminates, i.e. in the magenta-orange colour coding scheme.    
    In particular from the colours in this figure it is clear that the underlying deformation is not yet a full solution (but only a subsolution, roughly speaking an approximate solution) to the differential inclusion. The construction of solutions to Theorem \ref{ThmCI} is iterative. We have here depicted a subsolution obtained after three iterations.
    }
  \label{fig:buildingblocks}
\end{figure}

The solutions from Theorem \ref{ThmCI} are obtained iteratively through the method of convex integration, by iteratively deforming the current gradient distribution into an increasingly favourable one, eventually in the infinite iteration limit passing to a solution of the full differential inclusion \eqref{eq:diff_incl1} (see Figure \ref{fig:vert}).

Here in each step we cover a rectangle in the given domain by ``rhombi\hyp constructions'' (see Lemma 4.1 and Figure 4 in \cite{DPR}, building on the rhombi-constructions from the works \cite{C, CT05, MS1}) which are needle-like basic building blocks (see Figure \ref{fig:diamond}).

We remark that in this iterative replacement of deformation gradients, there are two favoured orientations for the rhombi-constructions (and thus for building blocks). These correspond to the horizontal and vertical rank-one directions which are present between the wells (see Lemma 4.1 and Figure 4 in \cite{DPR}). Thus, the choice of the orientation (of the needle-like nucleation domains) which was only heuristically justified in (ii) in \cite{BCH15, CH18, TIVP17} now becomes a rigorously justified consequence of compatibility. In order to avoid additional difficulties in the covering estimates and to keep closer to the models from \cite{BCH15,CH18,TIVP17}, we do not directly work with the diamond-shaped rhombi-constructions as the basic building blocks but consider rectangles oriented according to the rhombi-constructions which are then themselves covered by rhombi-constructions (see Figures \ref{fig:diamond} and \ref{fig:buildingblocks}).

\begin{rmk}
Instead of focusing on the geometrically non-linear two-well problem, we could also have used the results in \cite{RZZ16} or in \cite{RZZ18} instead of Theorem 1 in \cite{DPR}. As a consequence, all the results which are deduced below for the geometrically non-linear two-well problem would similarly hold in these settings. 
\end{rmk}

\begin{figure}[t]
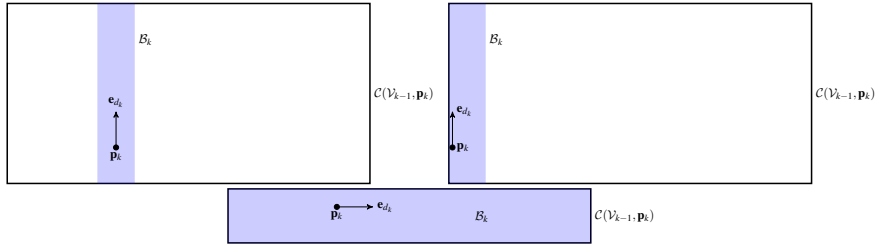

  \begin{center}
  \includegraphics[width=0.45\linewidth,page=6]{figures.pdf}
  \includegraphics[width=0.45\linewidth,page=7]{figures.pdf}
  \includegraphics[width=0.45\linewidth,page=8]{figures.pdf}
    \end{center}
  \caption{Given a point $\vc p_k \in \mathcal{C}(\mathcal{V}_k,\vc p_k)$ and a
    direction $\vc e_{d_k}$ we insert a maximal rectangle $\mathcal{B}_k\ni \vc p_k$ of
    aspect ratio $1:\delta$.
    Here, generically the rectangle is centered around $\vc p_k$ (left). If $\vc p_k$
    is too close to the boundary, we instead shift the rectangle $\mathcal{B}_k$
    to touch the boundary (right).
    If the domain $\mathcal{C}(\mathcal{V}_k,\vc p_k)$ is too narrow (pictured
    on the bottom), we instead pick $\mathcal{B}_k=\mathcal{C}(\mathcal{V}_k,\vc p_k)$.}
  \label{fig:Bk}
\end{figure}

With this background, we next introduce two possible models for simplified, geometrically constrained, \emph{mechanically compatible} nucleation dynamics. We emphasize that these dynamics are purely phenomenological and are not derived from first principles. Their main objective is to provide further insight into the observed phenomena of universal exponents in martensitic phase transformation by means of simplified dynamics now \emph{including compatibility}. Further we seek to indicate how convex integration algorithms could naturally play a role in these types of dynamics.

\begin{alg}[Model A]
\label{ModelA}
Let $\Omega=(0,1)^2$, $\delta\in(0,1)$ and:
\begin{itemize}
\item $\mt M\in K^{qc}$;
\item $\vc y_0 := \mt M\vc x$ in $\Omega$;
\item $\mathcal{V}_0 = \Omega$.
\end{itemize}
Then, for any $k\in\mathbb{N}$
\begin{itemize}
\item let $\mathcal{C}(\mathcal{V}_{k-1})$ be the set of connected components of $\mathcal{V}_{k-1}$ (these are at most $2^{k}$ rectangles)
\item let $\vc p_k: \mathcal{C}(\mathcal{V}_{k-1}) \to\R^2$ be a function associating to each element $D_{k-1}^j\in \mathcal{C}(\mathcal{V}_{k-1})$ a point $\vc p_k^j $ chosen uniformly at random in $D_{k-1}^j$
\item let $d_k: \mathcal{C}(\mathcal{V}_{k-1}) \to \{1,2\}$ be a function associating to each $D_{k-1}^j \in \mathcal{C}(\mathcal{V}_{k-1})$ an orientation $d_k^j$ (horizontal vs vertical; modelled by the numbers 1,2) which is equal to $1$ with probability $p\in(0,1)$ and equal to $2$ with probability $1-p$. We define $(d_k^j)^{\perp}:=\{1,2\}\setminus d_k^j$
\item for each $D_k^j\in \mathcal{C}(\mathcal{V}_{k-1})$ (which is a rectangle of sides-length $\ell_1^j,\ell_2^j$) we set
\[
\mathcal{B}_k^j:=
\begin{cases}
\bigl\{\vc x \in D_k^j \colon \vc x\cdot \vc e_{d_k^j}^\perp \in (\vc p_k^j\cdot \vc e_{d_k^j}^\perp -\delta \delta_k^j\ell_{d_k^j}^j,\vc p_k^j\cdot \vc e_{d_k^j}^\perp + \delta (1-\delta_k^j)\ell_{d_k^j}^j)\bigr\},\\
\quad \quad \qquad\text{if }\delta\ell_{d_k^j}^j<\ell_{(d_k^j)^\perp}^j,\\
D_k^j, \qquad \text{if }\delta\ell_{d_k^j}^j\geq\ell_{(d_k^j)^\perp}^j,
\end{cases}
\]
where 
$$
\delta_k^j:=\argmin\left\{\left|s-\frac12\right|\colon s\in(0,1)\text{ and both }\vc p_k^j- s \ell_{d_k^j}^j\vc e_{d_k^j}^\perp,\vc p_k^j + (1-s)\ell_{d_k^j}^j \vc e_{d_k^j}^\perp \in D_k^j	\right\}
$$
\item we set $\mathcal V_{k}:= \mathcal V_{k-1}\setminus \overline{\bigcup_{j}\mathcal{B}_k^j}$ and 
\[
\vc{y}_k:=
\begin{cases}
\vc y_{k-1},\qquad &\text{on }\Omega\setminus \overline{\bigcup_{j}\mathcal{B}_k^j},\\
\vc z_{k}^j,\qquad &\text{on }\mathcal B_k^j,
\end{cases}
\]
where $\vc z_k^j\in W^{1,\infty}(\mathcal{B}_k^j;\R^2)$ is given by Theorem \ref{ThmCI}.
\end{itemize}
\end{alg}

Let us comment on this algorithm and its dynamics: We begin with a sample $\Omega$ which represents our material at the beginning of the nucleation process (e.g. with the sample being in the austenite phase or possibly also being under some prestrain).
As illustrated in Figure \ref{fig:Bk}, in each iteration step of the algorithm, in each connected component of $\mathcal{V}_k$ we randomly choose a point and an orientation, and consider a set $\mathcal{B}_k^j$ (ideally centered at the chosen point and oriented in the chosen direction, see Figure \ref{fig:Bk}) on which we replace the current deformation $\vc y_k$ by a deformation $\vc z_k^j$ which itself is given by Theorem \ref{ThmCI}. We iterate this infinitely many times, eventually obtaining a deformation which is increasingly close to being a solution to \eqref{eq:diff_incl1} (and being an exact solution in the limit $k\rightarrow \infty$). 

We remark that the main idea of the dynamics of the described algorithm is very similar to the ones proposed and analyzed in \cite{RSS95, RSS95b, PLKK97, BCH15, CH18, TIVP17}. One main difference here is that instead of just ``declaring'' the domains $\mathcal{B}_k^j$ to be filled with martensite, our domains $\mathcal{B}_k^j$ are actually filled with martensite by replacing the deformation $\vc y_{k-1}$ from the previous step by the new deformation $\vc z_k^j$ which is obtained by virtue of Theorem \ref{ThmCI}. With respect to the algorithms from \cite{RSS95, RSS95b, PLKK97, BCH15, CH18, TIVP17} by prescribing the precise deformation, our algorithm thus takes care of an \emph{additional layer of complexity} which had been ignored in the previous models.

We remark that there are several natural ways of achieving this. In our algorithm the domains $\mathcal{B}_k^j$ are immediately completely covered by a stress-free martensite configuration. As a consequence, the fully transformed sets $\mathcal{B}_k^j$ will never be modified by the algorithm again (the material is already in the energy wells). As an alternative one could, for instance, have considered an algorithm in which the diamond-shaped rhombi-constructions (see Figure \ref{fig:diamond}) are iteratively applied and which thus improve the stress distribution but do not directly yield completely stress-free configurations. In this scenario, one would then try to improve the strain distribution in the sets $\mathcal{B}_k^j$ iteratively again in later steps of the algorithm.  Mathematically the latter model would thus correspond to a ``full, random convex integration model'', while our algorithm is rather a ``hybrid, random convex integration model'', where the convex integration part is taken as a full, black-box building block as a consequence of Theorem \ref{ThmCI}. Due to the additional difficulties in combining the probabilistic perspective and the detailed convex integration estimates, we postpone the study of ``full, random convex integration algorithms'' to future work.  

In studying the length scale distribution in the sense of understanding the regularity of the final solution $\vc y$ (in expectation or $\mu$-almost everywhere), we thus need to combine an analysis of the covering algorithm (determined by the generation of the sets $\mathcal{B}_k^j$) which is essentially a probabilistic fragmentation process (and thus related to the problems in for instance \cite{FGRV95, B06} and the references therein) with the regularity of the building blocks from Theorem \ref{ThmCI}. 

We stress that in our definition of the ``nucleation sets'' $\mathcal{B}_k^j$ we allow for \emph{degenerate} sets as long as their long axis is oriented perpendicular to the long axis of the sets which are introduced through nucleation. We however exclude degenerate, too long, thin sets, if their long axis is oriented in the same direction as the sets which are inserted in the nucleation step (see the second condition in the definition of the sets $\mathcal{B}_k^j$ which is a non-degeneracy condition). 
From a technical point of view this allows us to estimate the \emph{gain in volume fraction} in each iteration step without discussing \emph{tail estimates} which originate from increasingly degenerate domains. For these the perimeter would still be controlled, the gain in the volume would however not a priori yield exponential gains in the sense of Propositions \ref{Conv:A} and \ref{Conv:B}. From a physical point of view, the degenerate vs non-degenerate choice of the rectangles $\mathcal{B}_k^j$ at this point is ad hoc. However, we believe that in more sophisticated models control on the possible degeneracies can be deduced from surface energy constraints, thus giving some credence to these type of simplifications.
As an indication in the direction of being able to derive sufficiently strong tail estimates which allow us to drop the non-degeneracy assumption, in Section \ref{sec:tail} we establish such estimates for a slightly modified algorithm. We believe that with some further effort similar results could also hold for the unmodified algorithm (see Remark \ref{rmk:comment_alg}).

Let us next discuss a second variant of our nucleation mechanism:

\begin{alg}[Model B]
\label{ModelB}
Let $\Omega=(0,1)^2$, $\delta\in(0,1)$ and:
\begin{itemize}
\item $\mt M\in K^{qc}$;
\item $\vc y_0 := \mt M\vc x$ in $\Omega$;
\item $\mathcal{V}_0 = \Omega$.
\end{itemize}
Then, for any $k\in\mathbb{N}$
\begin{itemize}
\item let $\vc p_k$ be a point chosen uniformly at random in $\mathcal{V}_{k-1}$ and we define $\mathcal{C}(\mathcal{V}_{k-1},\vc p_k)$ to be the connected component of $\mathcal{V}_{k-1}$ containing $\vc p_k$ (we remark that $\mathcal{C}(\mathcal{V}_{k-1},\vc p_k)$ is always a rectangle of size $\ell_1\times\ell_2$, with $\ell_1,\ell_2\in(0,1)$)
\item let $d_k\in \{1,2\}$ be equal to $1$ with probability $p\in(0,1)$ and be equal to $2$ with probability $1-p$. We define $d_k^{\perp}:=\{1,2\}\setminus d_k$
\item we set
\[
\mathcal{B}_k:=
\begin{cases}
\bigl\{\vc x \in \mathcal{C}(\mathcal{V}_{k-1},\vc p_k)\colon \vc x\cdot \vc e_{d_k}^\perp \in (\vc p_k\cdot \vc e_{d_k}^\perp - \delta_k\ell_{d_k},\vc p_k\cdot \vc e_{d_k}^\perp + (1-\delta_k)\ell_{d_k})\bigr\},\\
 \qquad \quad \quad \text{if }\delta\ell_{d_k}<\ell_{d_k^\perp},\\
\mathcal{C}(\mathcal{V}_{k-1},\vc p_k), \qquad \text{if }\delta\ell_{d_k}\geq\ell_{d_k^\perp},
\end{cases}
\]
where 
\begin{align*}
\delta_k
&:=\argmin\left\{\left|s-\frac12\right|\colon s\in(0,1) \right.\\
& \quad \left. \text{ and both }\vc p_k- s \ell_{d_k}\vc e_{d_k}^\perp,\vc p_k + (1-s)\ell_{d_k} \vc e_{d_k}^\perp \in \mathcal{C}(\mathcal{V}_{k-1},\vc p_k)	\right\}
\end{align*}
\item we set $\mathcal V_{k}:= \mathcal V_{k-1}\setminus \overline{\mathcal{B}}_k$ and 
\[
\vc{y}_k:=
\begin{cases}
\vc y_{k-1},\qquad &\text{on }\Omega\setminus\mathcal B_k,\\
\vc z_{k},\qquad &\text{on }\mathcal B_k,
\end{cases}
\]
where $\vc z_k\in W^{1,\infty}(\mathcal{B}_k;\R^2)$ is given by Theorem \ref{ThmCI}.
\end{itemize}
\end{alg}

In contrast to the Algorithm \ref{ModelA} this algorithm does not nucleate a new martensitic plate in each connected component of $\mathcal{V}_k$ but considers the more realistic (but mathematically slightly more involved) situation of a single nucleation event in each step. The position of the nucleation here is determined by the volume of the largest undeformed piece in the sample (see Figure \ref{fig:pk} for an illustration of the differences between the two algorithms).

In the following sections we analyse both algorithms, study their convergence properties (in expectation) and the regularity of the resulting deformations.

\begin{figure}[t]
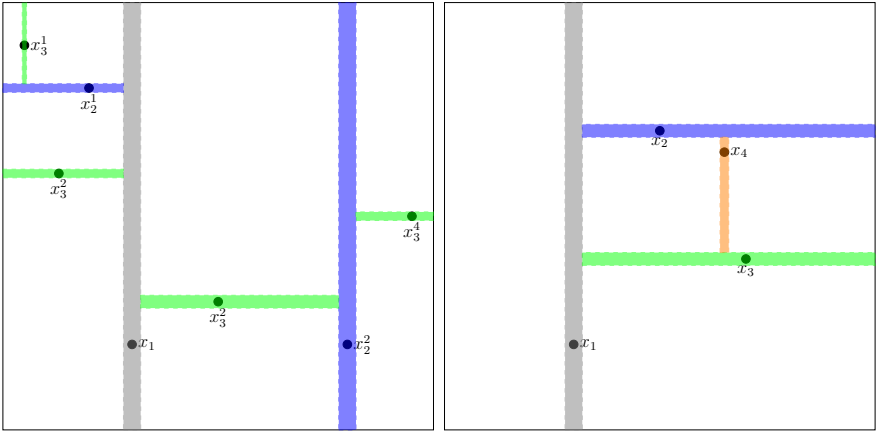

  \includegraphics[width=0.45\linewidth,page=10]{figures.pdf}
  \includegraphics[width=0.45\linewidth,page=11]{figures.pdf}
  \centering
  \caption{In Algorithm \ref{ModelB} (pictured on the right) in each step we randomly pick
    a point in the remaining area according to the normalized Lebesgue
    measure and insert a maximal rectangle $\mathcal{B}$ containing this point.
    In Algorithm \ref{ModelA} (pictured on the left) in each step we independently pick a
    random point for each connected component. In this schematic illustration of our algorithms the colours of the rectangles correspond to the iteration step $k$ of our algorithm. In particular, we observe that in Algorithm B (right) only one set $\mathcal{B}_k$ is introduced in the step $k$ while in the Algorithm A (left) we introduce $2^k$ new sets $\mathcal{B}_k^j$ in the $k$-th iteration step. As a consequence, on average, the microstructure produced in Algorithm \ref{ModelB} provides a much more uniform covering than the one from Algorithm \ref{ModelA}, see also Figures \ref{fig:ModelA}, \ref{fig:ModelB} in Section \ref{sec:sim}.}
      \label{fig:pk}
\end{figure}

\section{Probability spaces and extensions}
\label{sec:prob2}

In the following we define the probability spaces associated to the Algorithms
\ref{ModelA} and \ref{ModelB} for each finite step $k \in \N$ and a common
probability space $(X,\mathcal{F},\mu)$ which includes all finite steps. We thus consider our convex integration algorithms as stochastic processes with $k$ being interpreted as a discrete time step.
In Section \ref{sec:conv-algor} we then study the convergence of the algorithm
in $L^p$ by computing expectations of various norms of (differences of) the
sequences obtained in the constructions. In Sections \ref{sec:reg_sol} and \ref{sec:prob} we further study
higher regularity of the solutions and in particular show that our algorithms
$\mu$-almost surely produce a $W^{1+s,p}$ regular solution of the differential inclusion. 

Our probability spaces consider the sequences of points $x_k\in \Omega$
(produced by $\vc{p}_k$) and directions $d_k\in \{1,2\}$ chosen in the algorithms.
To each such sequence we may then associate a sequence of sets
\begin{align*}
\mathcal{V}_k=\mathcal{V}_k(x_1,d_1,x_2,d_2,\dots, x_k, d_k)
\end{align*}
by constructing the rectangles $\mathcal{B}_{k}(x_1,d_1,\dots, x_j,d_j)$ as prescribed in the
algorithm.
We will show that this function is measurable and that $(\mathcal{V}_k)_{k \in
  \N}$ can therefore be considered a random variable.

\subsection{The probabilistic set-up for Algorithm \ref{ModelB}}

For simplicity of notation in the following we first discuss Algorithm
\ref{ModelB} where $k$ steps correspond to choosing $k$ points $(x_1,\dots,
x_k)\in \Omega^{k}$ and directions $(d_1,\dots, d_k) \in \{1,2\}^k$. 
\begin{lem}
  \label{lem:finitekmeasures}
  Consider the sequences of points $(x_1,x_2,\dots)$ and directions $(d_1,d_2,\dots)$
  generated by Algorithm \ref{ModelB} as a stochastic process. Then the
  corresponding (pullback probability) measure can be expressed as a density.
  More precisely, for each $k \in \N$ there exists a
  probability density
  \begin{align*}
    \rho_k: (\Omega\times\{1,2\})^k \rightarrow [0,\infty),
  \end{align*}
  such that for every Borel set $B \in \mathcal{B}(\Omega^k)$ and every
  $(d_1',\dots, d_k') \in \{1,2\}^k$ the probability that Algorithm \ref{ModelB} produces a
  sequence with $(x_1,\dots, x_k) \in B$ and $(d_1,\dots,d_k)=(d_1',\dots,
  d_k')$ is given by
  \begin{align*}
  \mu_k(B \times\{d_1',\dots, d_k'\}):=  \int_{B} \rho_k(x_1,\dots, x_k; d_1', \dots , d_k') dx_1 \dots dx_k \prod_{j=1}^k P(d_j'),
  \end{align*}
  where $P(1)=p$ and $P(2)=1-p$. 
  That is, our probability measure can be written as a density with respect to the
  Lebesgue measure and a series of Bernoulli trials determining the direction of the
  rectangles.
  Furthermore, it holds that for any $k>1$ 
  \begin{align}
    \label{eq:2}
    \begin{split}
    &\quad p \int_\Omega dx_k  \rho_{k}(x_1,\dots x_k; d_1, \dots, d_{k-1}, 1) \\
    & \quad + (1-p) \int_\Omega dx_k  \rho_{k}(x_1,\dots x_k; d_1, \dots, d_{k-1}, 2) \\
    &= \rho_{k-1}(x_1,\dots x_{k-1}; d_1, \dots, d_{k-1}),
    \end{split}
  \end{align}
  that is $\mu_{k-1}$ is given by the marginal of $\mu_k$.
\end{lem}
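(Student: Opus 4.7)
The plan is to construct the measures $\mu_k$ and their densities $\rho_k$ by induction on $k$, using the Markov-kernel structure of the algorithm: conditionally on the past, $(x_k, d_k)$ are independent with $x_k$ uniform on $\mathcal{V}_{k-1}$ and $d_k \sim \mathrm{Bernoulli}(p)$. The central technical obstacle is verifying the measurability of the history-to-set map $(x_1, d_1, \ldots, x_{k-1}, d_{k-1}) \mapsto \mathcal{V}_{k-1}$ — more precisely, that the indicator $\mathbf{1}_{\mathcal{V}_{k-1}}(y)$ (jointly in $y$ and history) and the Lebesgue measure $|\mathcal{V}_{k-1}|$ (in history) are Borel functions.

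For the base case $k=1$, one has $\mathcal{V}_0 = (0,1)^2$ of unit measure and $d_1$ an independent $\mathrm{Bernoulli}(p)$ draw, so $\rho_1(x_1; d_1) := \mathbf{1}_\Omega(x_1)$ is a density and $\mu_1(B \times \{d_1'\}) = |B| P(d_1')$. For the inductive step, assuming $\rho_{k-1}$ and the measurability of $\mathcal{V}_{k-1}$, I would define
\begin{align*}
\rho_k(x_1, \ldots, x_k; d_1, \ldots, d_k) := \rho_{k-1}(x_1, \ldots, x_{k-1}; d_1, \ldots, d_{k-1}) \cdot \frac{\mathbf{1}_{\mathcal{V}_{k-1}}(x_k)}{|\mathcal{V}_{k-1}|},
\end{align*}
with the convention that the quotient is zero on the $\mu_{k-1}$-null set where $|\mathcal{V}_{k-1}| = 0$ (those histories on which the algorithm has exhausted the sample). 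Since $\rho_k$ is independent of $d_k$, the marginal identity \eqref{eq:2} reduces to Fubini: integrating the uniform kernel in $x_k$ returns $1$ whenever $|\mathcal{V}_{k-1}| > 0$, and then $p\cdot 1 + (1-p)\cdot 1 = 1$ recovers $\rho_{k-1}$ exactly.

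The measurability claim itself I would handle via a parallel induction showing that $\mathcal{V}_{k-1}$ is always a finite disjoint union of open axis-aligned rectangles whose vertex coordinates are Borel functions of the history. At each step, the connected component $\mathcal{C}(\mathcal{V}_{k-2}, x_{k-1})$ containing $x_{k-1}$ is selected from a finite list by the Borel condition ``$x_{k-1}$ lies in rectangle $R_j$''; the split location $\delta_{k-1}$ is the $\argmin$ of an explicit function of the vertices and $x_{k-1}$ over $(0,1)$; the cut-slab coordinates are elementary $\min$, $\max$, and affine expressions of those data; and the case distinction between $\delta\ell_{d_{k-1}} < \ell_{d_{k-1}^\perp}$ and its complement is itself Borel. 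Consequently $\mathcal{V}_{k-1} = \mathcal{V}_{k-2}\setminus\overline{\mathcal{B}_{k-1}}$ is again a finite union of open rectangles with Borel vertex coordinates, closing the induction and yielding both the joint Borel measurability of $\mathbf{1}_{\mathcal{V}_{k-1}}(\cdot)$ and the Borel measurability of $|\mathcal{V}_{k-1}|$ (as a finite sum of products of such coordinates). Finally, that $\mu_k$ as defined coincides with the genuine pushforward of the algorithmic randomness follows because both measures agree on product sets $B_1 \times \{d_1'\} \times \cdots \times B_k \times \{d_k'\}$ by the conditional construction, and such sets generate the product Borel $\sigma$-algebra.
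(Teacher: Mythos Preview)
Your proof is correct and follows essentially the same inductive/Markov-kernel approach as the paper: both build $\rho_k$ recursively by multiplying $\rho_{k-1}$ with the normalized indicator of $\mathcal{V}_{k-1}$, and both obtain the marginal identity from the normalization of the conditional kernel. You are in fact more explicit than the paper about the measurability of the history-to-set map (the paper simply asserts that $\mathcal{B}_1(x_1,d_1')$ is ``prescribed in a measurable way''), and your treatment of the null set $\{|\mathcal{V}_{k-1}|=0\}$ is a small but welcome clarification.
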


\begin{proof}
  The first point $x_1$ generated in Algorithm \ref{ModelB} is chosen uniformly
  at random in $\Omega$ (with respect to the Lebesgue measure) and the direction
  $d_1 \in \{1,2\}$ is chosen independently with probability $(p,1-p)$.
  Thus, in this case
  \begin{align}
    \mu_1(x_1,d_1)=
    \begin{cases}
      \frac{p}{|\Omega|}dx_1 &\text{ if } d_1=1, \\
      \frac{1-p}{|\Omega|}dx_1 &\text{ if } d_1=2.
    \end{cases}
  \end{align}
  Given a point and direction $(x_1,d_1) \in \Omega \times \{1,2\}$, in Algorithm \ref{ModelB} we obtain a rectangle
  $\mathcal{B}_{1}(x_1,d_1)$ and choose $x_2$ uniformly at random (with respect to
  the Lebesgue measure) in $\Omega\setminus \mathcal{B}_{1}(x_1,d_1)$ with probability density:
  \begin{align}
    \frac{1}{|\Omega\setminus \mathcal{B}_{1}(x_1,d_1)|} (1- 1_{\mathcal{B}_{1}(x_1,d_1)}(x_2)) dx_2,
  \end{align}
  and choose $d_2$ independently.
  Thus, given $d_1',d_2'$, we may compute
  \begin{align}
    \mu_2(x_1,x_2;d_1',d_2') = P(d_1')P(d_2')  \frac{1}{|\Omega\setminus \mathcal{B}_{1}(x_1,d_1')|} (1- 1_{\mathcal{B}_{1}(x_1,d_1')}(x_2)) dx_1 dx_2,
  \end{align}
  where we note that $\mathcal{B}_{1}(x_1,d_1')$ is prescribed in a measurable way.

  More generally, given $(x_1,\dots, x_k)$, $(d_1',\dots, d_k')$, and a set $B= B_1 \times \dots \times B_k $ the conditional
  probability for the choice of the point $x_{k+1}$ is given by the normalized
  Lebesgue measure on $\Omega$ with $k$ rectangles $(R_i(x_1,\dots, x_k,d_1',\dots,
  d_k'))_{i\in \{1,\dots, 2^k\}}$ removed and, for product sets,
  \begin{align}
    \label{eq:1}
    \begin{split}
    & \mu_{k+1}(B \times B_{k+1}\times \left(d_1',\dots,d_k', d_{k+1}'\right)) \\
    &= P(d_{k+1}') \int\limits_{B_{k+1}} \frac{1_{\Omega\setminus \cup R_i(x_1,\dots x_k, d_1',\dots, d_k')}(x_{k+1})}{|\Omega\setminus \cup R_i(x_1,\dots x_k, d_1',\dots, d_k')|} dx_{k+1}\  \mu_{k}(B \times \{d_1',\dots,d_k'\}).  
    \end{split}
  \end{align}
  In particular, as the conditional probabilities are normalized, the marginal
  property \eqref{eq:2} immediately follows.
\end{proof}

Having constructed probability spaces for each finite $k$, we now construct an
extension $((\Omega\times\{1,2\})^{\N}, \mathcal{F}, \mu)$ which includes all
these measures as restrictions.
In the case of independent measures this would correspond to identifying the above
measures with a premeasure on cylinder sets, constructing the product $\sigma$ algebra and using Caratheodory's extension theorem.
For our case we rely on the following more general extension theorem for
discrete time stochastic processes.

\begin{thm}[Theorem 3.3.6 in \cite{chung2001course}]
  \label{thm:kolmogorovextension}
  Let $m, n \in \N$, $1\leq m<n$ and define $\pi_{mn}$ to be the embedding map
  of the Borel $\sigma$ algebra $\mathcal{B}^{m}$ on $\R^m$ into $\mathcal{B}^n$ given by
  \begin{align*}
    \forall B \in \mathcal{B}^m: \pi_{mn}(B)=\{(x_1,\dots, x_{n}): (x_1,\dots, x_m) \in B\}.
  \end{align*}
  Suppose that for each $n \in \N$, $\mu_n$ is a probability measure on
  $(\mathbb{R}^n, \mathcal{B}^n)$ such that
  \begin{align}
    \label{eq:3}
    \forall m<n: \mu_n\circ \pi_{mn}= \mu_m.
  \end{align}
  Then there exists a probability space $(X, \mathcal{F}, \mu)$ and a sequence
  of random variables $X_j$ such that for each $n$, $\mu_n$ is the
  $n$-dimensional probability measure of the vector $(X_1,\dots, X_n)$.
\end{thm}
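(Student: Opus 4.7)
The plan is to prove the Kolmogorov extension theorem in the standard way: assemble the finite-dimensional measures $\mu_n$ into a premeasure on the algebra of cylinder sets in $X = \R^{\N}$, verify countable additivity by a tightness/compactness argument, and then invoke Carath\'eodory.

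First, I would equip $X = \R^{\N}$ with the product $\sigma$-algebra $\mathcal{F}$, i.e.\ the smallest $\sigma$-algebra making each coordinate projection $X_j(\omega) := \omega_j$ measurable. Let $\mathcal{A} \subset \mathcal{F}$ denote the collection of cylinder sets, those of the form $\pi_{n\infty}^{-1}(B)$ with $n \in \N$ and $B \in \mathcal{B}^n$ (here $\pi_{n\infty}^{-1}(B) = \{\omega \in X : (\omega_1,\dots,\omega_n) \in B\}$). A short check shows $\mathcal{A}$ is an algebra and $\sigma(\mathcal{A}) = \mathcal{F}$. On $\mathcal{A}$ I would define $\mu(\pi_{n\infty}^{-1}(B)) := \mu_n(B)$; the compatibility hypothesis \eqref{eq:3} ensures this is well-defined, because any two representations of a cylinder set (say with bases $B \in \mathcal{B}^m$ and $B \times \R^{n-m} \in \mathcal{B}^n$ for $m < n$) yield the same value. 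Finite additivity of $\mu$ on $\mathcal{A}$ then follows from finite additivity of each $\mu_n$.

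The main obstacle, and the only genuinely nontrivial step, is to upgrade finite additivity to countable additivity on $\mathcal{A}$. Equivalently, I must show $\mu$ is continuous from above at $\emptyset$: for any decreasing sequence $(A_k) \subset \mathcal{A}$ with $\bigcap_k A_k = \emptyset$ one has $\mu(A_k) \to 0$. I would argue by contradiction, assuming $\mu(A_k) \geq \varepsilon > 0$ for all $k$, and exploit the fact that every Borel probability measure on $\R^n$ is inner regular by compact sets. Writing $A_k = \pi_{n_k \infty}^{-1}(B_k)$, I would pick compact $K_k \subset B_k$ with $\mu_{n_k}(B_k \setminus K_k) < \varepsilon/2^{k+1}$; replacing $\pi_{n_k \infty}^{-1}(K_k)$ by intersections I may assume these cylinder sets over compacts $\tilde K_k$ are nested and still have $\mu$-measure bounded below by $\varepsilon/2$. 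A diagonal extraction on the coordinate projections (using sequential compactness coordinate-by-coordinate in $\R$) then yields a point $\omega^* \in \bigcap_k \tilde K_k \subset \bigcap_k A_k$, contradicting $\bigcap_k A_k = \emptyset$.

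Once countable additivity is established, Carath\'eodory's extension theorem produces a unique probability measure $\mu$ on $\mathcal{F}$ extending the premeasure. By construction, the coordinate random variables $X_j$ are $\mathcal{F}$-measurable, and the law of $(X_1,\dots,X_n)$ under $\mu$ is precisely $\mu_n$ since $\mu((X_1,\dots,X_n) \in B) = \mu(\pi_{n\infty}^{-1}(B)) = \mu_n(B)$ for every $B \in \mathcal{B}^n$. I expect the compactness/diagonalization step to be the main technical point; all other steps are essentially bookkeeping about cylinder algebras and consistency.
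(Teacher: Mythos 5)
Your outline is correct: it is the standard proof of the Kolmogorov extension theorem (cylinder-set premeasure, well-definedness via the consistency condition \eqref{eq:3}, countable additivity via inner regularity by compacts plus a diagonal compactness argument, then Carath\'eodory). The paper itself gives no proof of this statement — it is quoted directly as Theorem 3.3.6 of \cite{chung2001course} — and your argument is essentially the one in that cited source, so there is nothing further to compare.
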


Following the argument in \cite{chung2001course}, we may
apply this extension theorem to the sequence of probability measures
generated by Algorithm \ref{ModelB}, obtaining a probability measure on the
space of sequences $(\Omega \times\{1,2\})^{\N}$.

\begin{lem}
  \label{lem:extension}
  Let $\mu_k$ be the sequence of probability measures on $(\Omega
  \times\{1,2\})^k$ as in Lemma \ref{lem:finitekmeasures} with the product Borel
  $\sigma$ algebra for each $k$.
  Let $X=(\Omega \times\{1,2\})^{\N}$ be the Cartesian product
  equipped with the product $\sigma$ algebra.
  Then there exists a measure $\mu$ on $X$ and a sequence of random variables
  $X_j$ such that $\mu_k$ is the probability measure of the
  vector $(X_1,\dots, X_k)$.
\end{lem}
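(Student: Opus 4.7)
The statement is essentially a direct application of the Kolmogorov-type extension theorem (Theorem \ref{thm:kolmogorovextension}) to the consistent family $\{\mu_k\}$ already constructed in Lemma \ref{lem:finitekmeasures}. My plan is therefore to verify the two hypotheses of that theorem and then invoke it, with essentially no probabilistic work left to do beyond what Lemma \ref{lem:finitekmeasures} already accomplishes.

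The first step will be to recast each $\mu_k$ as a Borel probability measure on some $\mathbb{R}^{3k}$, since Theorem \ref{thm:kolmogorovextension} is stated for measures on $\mathbb{R}^n$. I would do this by identifying $\Omega \times \{1,2\}$ with its natural image in $\mathbb{R}^3$ under the embedding $(x,d)\mapsto (x_1,x_2,d)$, and extending $\mu_k$ by zero outside the (Borel) embedded copy of $(\Omega \times \{1,2\})^k$ in $\mathbb{R}^{3k}$. The measurability of this extension and the fact that it is a probability measure are immediate from the explicit density representation $\rho_k$ together with the Bernoulli weights $P(d_j')$ obtained in Lemma \ref{lem:finitekmeasures}.

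The second step is to verify the consistency relation $\mu_n\circ \pi_{mn}=\mu_m$. By iteration it suffices to treat the case $n=m+1$, which is precisely the marginal identity \eqref{eq:2}: integrating $\rho_k$ in the variable $x_k$ and summing over $d_k\in\{1,2\}$ with weights $p$ and $1-p$ recovers $\rho_{k-1}$. Thus for any Borel $B\subset (\Omega\times\{1,2\})^m$, one reads off $\mu_{m+1}(\pi_{m,m+1}(B))=\mu_m(B)$.

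Applying Theorem \ref{thm:kolmogorovextension} then yields a probability space $(\tilde X,\tilde{\mathcal F},\tilde\mu)$ and random variables $X_j$ whose $k$-dimensional marginal is $\mu_k$. To pass from $\tilde X$ to the announced $X=(\Omega\times\{1,2\})^{\N}$ equipped with the product $\sigma$-algebra, I would restrict: since every $\mu_k$ is supported on $(\Omega\times\{1,2\})^k$, all cylinder sets whose base lies outside this subset carry zero mass, so $\tilde\mu$ descends unambiguously to a measure $\mu$ on $X$ with the required property. The only obstacle here is notational, namely bridging the gap between the $\mathbb{R}^n$-valued formulation of Theorem \ref{thm:kolmogorovextension} and our state space $\Omega\times\{1,2\}$; this is handled cleanly by the Polish embedding just described, so the proof reduces to a verification of hypotheses rather than any substantive new argument.
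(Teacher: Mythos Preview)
Your proposal is correct and follows essentially the same approach as the paper: embed $\Omega\times\{1,2\}$ into $\mathbb{R}^3$, invoke the marginal identity \eqref{eq:2} to verify consistency, and apply Theorem \ref{thm:kolmogorovextension}. You spell out a few more details (the extension by zero and the restriction back to $(\Omega\times\{1,2\})^{\N}$), but the underlying argument is the same.
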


\begin{proof}
  We consider probability measures on
  $\Omega\times \{1,2\}$, which can be considered as a (two-dimensional) subset of $\R^3$.
  The marginal property \eqref{eq:3} is satisfied by \eqref{eq:2} and we hence
  conclude by applying Theorem \ref{thm:kolmogorovextension}.
\end{proof}

\subsection{The probabilistic set-up for Algorithm \ref{ModelA}}

It remains to discuss Algorithm \ref{ModelA}. Here, the choice of $(x_1,d_1)$ is
identical to Algorithm \ref{ModelB}, but in the $k$-th step we choose not just
one point $x_k$ but rather $2^k$ points $(x_k^i)_{i=1}^{2^k}$, one for each connected
component.

\begin{lem}
  \label{lem:extensionA}
  Let $\mu_k$ be the sequence of probability measures on $(\Omega
  \times\{1,2\})^{2^{k}}$ generated by $k$ steps of Algorithm \ref{ModelA}.
  Then there exists a measure $\mu$ on $(\Omega \times\{1,2\})^{\N}$ and a sequence of random variables
  $X_j$ such that $\mu_k$ is the probability measure of the
  vector $(X_1,\dots, X_{2^k})$.
\end{lem}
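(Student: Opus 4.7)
The plan is to mirror the argument used for Algorithm \ref{ModelB} in Lemmas \ref{lem:finitekmeasures} and \ref{lem:extension}, adapting it to accommodate the fact that Algorithm \ref{ModelA} generates several points per iteration step. As a first step I would establish the analogue of Lemma \ref{lem:finitekmeasures}, namely that for each finite $k$ there is a probability measure $\mu_k$ on $(\Omega\times\{1,2\})^{2^k}$ written as a density against Lebesgue measure in the point coordinates times a product of Bernoulli laws in the direction coordinates. Because at iteration $k$ the number of connected components $n_k := |\mathcal{C}(\mathcal{V}_{k-1})|$ satisfies $n_k\leq 2^{k-1}$, to fit the statement exactly one introduces a bookkeeping convention (e.g.\ padding with point masses at a fixed reference value for ``ghost slots'' corresponding to components that failed to be created); this does not affect the probabilistic content.

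The key structural observation is that, conditional on the history $(x_j^i,d_j^i)$ through iteration $k-1$, the partition $\mathcal{C}(\mathcal{V}_{k-1})=\{D_{k-1}^1,\ldots,D_{k-1}^{n_k}\}$ is a deterministic and measurable function of this history: each step of Algorithm \ref{ModelA} constructs $\mathcal{B}_j^i$ via finitely many elementary measurable operations on rectangles, and hence $\mathcal{V}_{k-1}$ and its components depend measurably on $(x_j^i,d_j^i)_{1\le j\le k-1}$. The conditional law of the step-$k$ variables $\{(\vc p_k^j,d_k^j)\}_j$ is then the product of uniform measures on the $D_{k-1}^j$ (normalised by $|D_{k-1}^j|^{-1}$) with independent Bernoulli$(p,1-p)$ laws on $\{1,2\}$, completed by the padding on the ghost slots. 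Multiplying by $\mu_{k-1}$ yields $\mu_k$, whose density is an explicit product of indicator-function weights and normalisations analogous to \eqref{eq:1}.

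Next I would verify Kolmogorov consistency: the marginal of $\mu_k$ onto the first $2^{k-1}$ coordinates is $\mu_{k-1}$. This follows verbatim from the argument giving \eqref{eq:2}, applied iteratively to each of the new step-$k$ sub-coordinates, since integrating a conditional probability density over its support yields one. Once consistency is in hand, the proof is completed exactly as for Lemma \ref{lem:extension}: view $\Omega\times\{1,2\}$ as a measurable subset of $\R^3$, embed each $(\Omega\times\{1,2\})^{2^k}$ into an appropriate Euclidean product space, and invoke Theorem \ref{thm:kolmogorovextension} to produce a probability space $(X,\mathcal{F},\mu)$ with random variables $X_j$ whose initial $2^k$-dimensional marginal is $\mu_k$.

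The only genuinely non-routine point is the measurability of $\mathcal{C}(\mathcal{V}_{k-1})$ (and of the enumeration used in the padding) as a function of the random history; everything else is bookkeeping. I expect this to be the main obstacle, but it is handled by induction: given a measurable enumeration at step $k-1$, the new components of $\mathcal{V}_k$ are obtained by removing a measurably-chosen rectangle from each $D_{k-1}^j$, and one may fix once and for all a canonical measurable ordering (e.g.\ lexicographic in the lower-left vertex) on the resulting at-most-two children, so the overall enumeration remains a Borel-measurable function of the history.
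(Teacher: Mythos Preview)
Your approach is essentially the same as the paper's: both reduce to Lemma \ref{lem:finitekmeasures}/\ref{lem:extension} by exploiting that the step-$k$ points are chosen independently across components, then invoke Theorem \ref{thm:kolmogorovextension}. The one structural difference is in how the consistency hypothesis of Theorem \ref{thm:kolmogorovextension} is arranged. That theorem, as stated, requires a measure $\mu_n$ for \emph{every} $n\in\N$ with marginals matching for all $m<n$; your formulation produces measures only at the indices $2^k$ and checks consistency between consecutive $\mu_k$ and $\mu_{k-1}$. The paper closes this gap by explicitly introducing \emph{intermediate} measures $\mu_k^1,\mu_k^2,\dots,\mu_k^{2^k}$, obtained by adding the step-$k$ points one at a time (independence makes the order irrelevant), so that the full list $\mu_1^1,\mu_2^1,\mu_2^2,\dots$ is indexed by all naturals and each successive marginal is automatic. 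Your phrase ``applied iteratively to each of the new step-$k$ sub-coordinates'' is exactly this idea, so the content is the same; you would just want to name these intermediate objects explicitly before invoking Theorem \ref{thm:kolmogorovextension}.

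On the side of extra care, you discuss two points the paper leaves implicit: the padding convention when $|\mathcal{C}(\mathcal{V}_{k-1})|<2^{k-1}$, and the measurability of a canonical enumeration of the children rectangles. Both are genuine but routine; the paper's sequential-picking device in fact sidesteps the padding issue, since one simply enumerates however many components actually exist and the intermediate-measure sequence adjusts its length accordingly.
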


\begin{proof}
We note that given the points obtained in step $k$, the algorithm picks all these points
independently at the same time.
In view of the extension of Theorem \ref{thm:kolmogorovextension} we further construct a
sequence of intermediate measures
\begin{align*}
\mu_{k}^{1}((x_{k-1}^{i}), x_{k}^1),\mu_{k}^{2}((x_{k-1}^{i}), x_{k}^1, x_{k}^2), \dots,
\end{align*}
where we pick them sequentially from the connected components (since these points are chosen
independently we may pick in any order).
Each such measure can be written in terms of a density expressing conditional
probabilities as in \eqref{eq:1}, where instead of all of $\Omega\setminus \cup
R_i(x_1,\dots, x_k^{i}, d_1, \dots, d_{k}^i)$, we now consider the (Lebesgue)
normalized densities on each connected component.

With this convention the measures considered in Algorithm \ref{ModelA}
correspond to the subsequence $\mu_{k}:=\mu_{k}^{2^k}$.
As the points are chosen independently according to a probability measure (which
is normalized), the sequence of measures
$\mu_k^1, \dots \mu_k^{2^k}, \mu_{k+1}^1, \dots, \mu_{k+1}^{2^{k+1}},
\mu_{k+2}^{1}, \dots$ satisfies the marginal property and hence $\mu$ can be
obtained by applying Theorem \ref{thm:kolmogorovextension}.
\end{proof}

\section{Convergence of the algorithms}
\label{sec:conv-algor}

In this section we study the convergence of the Algorithms \ref{ModelA} and
\ref{ModelB} with respect to $L^p$ norms. More precisely we show that the
expected value (with respect to the measure $\mu$ of Section \ref{sec:prob2}) of
the Lebesgue measure of the sets $\mathcal{V}_k$ tends to zero as
$k\rightarrow \infty$.
In Section \ref{sec:reg_sol} we further show that the expected value of the $BV$ norms
of the associated characteristic functions does not grow too quickly and that, as
a result, the expectations of the $W^{1+s,p}$ norms of the differences $\nabla \vc y_{k+1} - \nabla \vc y_k$
form a Cauchy sequence (in $\R$).
In Section \ref{sec:prob} we then pass from statements about expectations to
statements about sequences and in particular establish convergence and higher regularity for $\mu$-almost every 
sequence.

\subsection{Convergence of Model A}
In this section we prove the following result:
\begin{prop}
\label{Conv:A}
Consider the Algorithm \ref{ModelA} (Model A), let $\mu$ be the probability measure
constructed in Lemma \ref{lem:extensionA} and let $\mathbb{E}(\cdot)$ denote the
expectation with respect to $\mu$.
Then for each $k\geq 0$, $\mathcal{V}_k$ is a random variable with respect to
$\mu$ and it holds that 
\begin{align*}
\mathbb{E}\left(|\mathcal V_k| \right) \leq \tilde{c}_A^k |\Omega|,
\end{align*}
where $\tilde{c}_A := \max\left\{p + (1-p)(1-\delta),(1-p) + p(1-\delta)\right\}\in(0,1)$ and $p \in (0,1)$ is as in Algorithm \ref{ModelA}.
\end{prop}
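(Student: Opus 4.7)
The plan is to prove the bound $\mathbb{E}[|\mathcal V_k|] \leq \tilde c_A^k |\Omega|$ by induction on $k$, reducing everything to a single per-rectangle estimate. Measurability of $\mathcal V_k$ as a random variable with respect to $\mu$ follows from the measurable construction of the rectangles $\mathcal B_k^j$ already established in Section~\ref{sec:prob2}. Conditional on the $\sigma$-algebra $\mathcal F_{k-1}$ generated by the first $k-1$ rounds, Algorithm~\ref{ModelA} processes each connected component $D_{k-1}^j \in \mathcal C(\mathcal V_{k-1})$ independently, so linearity of expectation gives
\begin{align*}
\mathbb{E}\bigl[|\mathcal V_k| \bigm| \mathcal F_{k-1}\bigr] = \sum_j \bigl(|D_{k-1}^j| - \mathbb{E}\bigl[|\mathcal B_k^j| \bigm| \mathcal F_{k-1}\bigr]\bigr).
\end{align*}
Thus it suffices to prove the per-rectangle estimate $|D| - \mathbb{E}[|\mathcal B(D)|] \leq \tilde c_A\,|D|$ for every open rectangle $D$, where $\mathcal B(D)$ denotes the random rectangle produced by Algorithm~\ref{ModelA} applied to $D$; summing over components and iterating the tower property closes the induction.

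For the per-rectangle bound the key observation is that, for a rectangle $D$ of sides $\ell_1,\ell_2$, the area $|\mathcal B(D)|$ depends only on the dimensions and on the direction $d$, not on the sampled point $\vc p$: indeed $|\mathcal B| = \delta \ell_d^2$ in the non-degenerate case and $|\mathcal B| = \ell_1\ell_2$ in the degenerate one. Setting $s := \ell_1/\ell_2 \in (0,\infty)$ and case-splitting on $s \leq \delta$, $\delta < s < 1/\delta$, and $s \geq 1/\delta$, a direct computation yields the residual fraction
\begin{align*}
\rho(s) := 1 - \frac{\mathbb{E}[|\mathcal B(D)|]}{|D|} = \begin{cases} p(1-\delta s), & s \leq \delta,\\ 1 - \delta\bigl(ps + (1-p)/s\bigr), & \delta < s < 1/\delta,\\ (1-p)(1-\delta/s), & s \geq 1/\delta. \end{cases}
\end{align*}
In the two extreme regimes one of the two directions always collapses the rectangle completely, whereas in the middle regime both directions insert a genuine plate.

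The remaining task is to verify $\rho(s) \leq \tilde c_A = 1 - \delta \min(p,1-p)$ uniformly in $s \in (0,\infty)$. Possible suprema are attained at the boundary points $s \in \{0,\delta,1/\delta,\infty\}$ and, when it lies in $(\delta,1/\delta)$, at the interior critical point $s^* = \sqrt{(1-p)/p}$ of the middle regime; the associated candidate values are $p$, $1-p$, $p(1-\delta^2)$, $(1-p)(1-\delta^2)$, and $1 - 2\delta\sqrt{p(1-p)}$. Each comparison with $\tilde c_A$ reduces to an elementary arithmetic inequality: the boundary bounds come down to $\min(p,1-p)(1+\delta) \leq 1$, which holds since $\min(p,1-p) \leq 1/2$ and $\delta < 1$, while the interior bound reduces to $4p(1-p) \geq \min(p,1-p)^2$. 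The main obstacle, which actually fixes the precise form of $\tilde c_A$, is the ``thin-rectangle'' regime $s \to 0^+$ or $s \to \infty$: there one direction collapses the whole component while the other removes only an arbitrarily small amount, so $\rho(s)$ approaches $p$ respectively $1-p$. It is precisely the non-degeneracy cutoff in the definition of $\mathcal B_k^j$ that keeps this regime under control and prevents the exponential decay rate from degrading; the analysis without such a cutoff is the subject of the tail estimates of Section~\ref{sec:tail}.
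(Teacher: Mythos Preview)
Your proof is correct and follows the same overall structure as the paper's: condition on the history, reduce to a per-rectangle estimate, and iterate. The difference is only in how you establish the per-rectangle bound. You compute the exact expected residual fraction $\rho(s)$ as a piecewise function of the aspect ratio and then maximize it over $s\in(0,\infty)$, checking boundary points and the interior critical point. The paper instead takes a shortcut: it observes that whichever direction $d$ satisfies $\ell_d=\max(\ell_1,\ell_2)$ always removes at least a fraction $\delta$ of the area (either $\delta\ell_d/\ell_{d^\perp}\geq\delta$ in the non-degenerate case, or the whole rectangle in the degenerate case), while the other direction is simply bounded by $1$. This immediately gives
\[
\mathbb{E}\bigl[|D\setminus\mathcal B|\,\big|\,D\bigr]\leq p_{\mathrm{fav}}(1-\delta)|D|+p_{\mathrm{unfav}}|D|\leq\tilde c_A|D|,
\]
with no case analysis on $s$ and no critical-point computation. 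Your route has the advantage of making visible exactly where the bound $\tilde c_A$ is sharp (namely as $s\to 0^+$ or $s\to\infty$, as you note), which connects nicely to the tail discussion in Section~\ref{sec:tail}; the paper's route is shorter.
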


We note that $\mathcal{V}_k$ only depends on $((x_1,d_1),(x_2,d_2),\dots)$ in terms of the
points and directions chosen up to step $k$. Hence, the expectation $\mathbb{E}$
may equivalently be computed in terms of the measures $\mu_k$ in which case we work with $\mathbb{E}_k$ (see Section \ref{sec:notation} for the notation).

\begin{proof}
Let $D^j_k\in \mathcal{C}(\mathcal{V}_{k-1})$. We notice that 
\[
\mathbb{E}_{k+1}\left(|D_k^j\setminus B_k^j| \Big| D_k^j \right) \leq \left.
\begin{cases}
p(1-\delta)|D_k^j| + (1-p) |D_k^j|,&\qquad\text{if } \ell_1^j>\ell_2^j	  \\
(1-p)(1-\delta)|D_k^j| + p |D_k^j|,&\qquad\text{if }\ell_1^j\leq \ell_2^j
\end{cases}  \right\}
\leq \tilde{c}_A |D_k^j|,
\]
since the new rectangle covers a fraction $\delta$ of the area if a favourable orientation is chosen by the algorithm.
Here, $\mathbb{E}(V|D_{k}^j)$ corresponds to the conditional expectation with
$(x_{1},d_1,\dots, x_k,d_k)$ prescribed (see \eqref{eq:1} for the corresponding
probability density).

Integrating this estimate with respect to $(x_{1},d_1,\dots, x_k,d_k)$ (and
$\mu_k$) we obtain the expected value inequality
\begin{align*}
\mathbb{E}_{k+1}\left(|D_k^j\setminus B_k^j| \right)\leq \tilde{c}_A \mathbb{E}_k\left(|D_k^j|\right).
\end{align*}
Thus, taking the union over $j$ and exploiting the fact that $\mathcal{V}_0 = \Omega$ and that $|\mathcal{V}_k|=\bigcup_j |D_k^j\setminus B_k^j|$ proves the claim.
\end{proof}

In particular, Proposition \ref{Conv:A} implies the following convergence result:

\begin{cor}
\label{cor:A}
Algorithm \ref{ModelA} (Model A) converges in expectation, i.e.,
$$
\lim_{k\to\infty} \mathbb{E}\left(|\mathcal{V}_k|\right) =0,\qquad
\lim_{(k,l)\to(\infty,\infty)}  \mathbb{E} \left(\|\vc y_k - \vc y_l\|_{L^\infty(\Omega)}\right)=0.
$$
\end{cor}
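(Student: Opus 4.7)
Both limits will follow from Proposition \ref{Conv:A}. The first, $\lim_{k\to\infty}\E(|\mathcal{V}_k|)=0$, is immediate since $\tilde{c}_A\in(0,1)$ yields $\tilde{c}_A^k|\Omega|\to 0$.

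For the $L^\infty$-limit the key structural observation is that, for every $k\geq l$, the algorithm only modifies the deformation on newly transformed rectangles, so $\vc y_k=\vc y_l$ on $\Omega\setminus\mathcal{V}_l$. On each connected component $D\in\mathcal{C}(\mathcal{V}_l)$ (an open rectangle) one has $\vc y_l|_D=\mt M\vc x$ because $D$ has never been touched since initialization, while $\vc y_k|_{\overline D}$ is Lipschitz with a uniform constant $L$ depending only on $\mt M$ and the wells $K$, since the replacement blocks $\vc z_j^i$ from Theorem \ref{ThmCI} have gradients in $K$ a.e. and $\overline D$ is convex. Since every such building block matches $\mt M\vc x$ on its boundary, both $\vc y_k$ and $\vc y_l$ equal $\mt M\vc x$ on $\partial D$, hence $f:=\vc y_k-\vc y_l$ vanishes on $\partial D$ and is Lipschitz with constant $\leq 2L$; consequently $|f(x)|\leq 2L\,\dist(x,\partial D)$ on $D$.

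The final ingredient is the elementary fact that for a rectangle $D$ with sides $\ell_1,\ell_2$ the maximum of $\dist(\cdot,\partial D)$ equals the inradius $\min(\ell_1,\ell_2)/2\leq\sqrt{\ell_1\ell_2}/2=\sqrt{|D|}/2$, which gives
\[
\|\vc y_k-\vc y_l\|_{L^\infty(\Omega)}\leq L\max_{D\in\mathcal{C}(\mathcal{V}_l)}\sqrt{|D|}\leq L\sqrt{|\mathcal{V}_l|}.
\]
Taking the expectation and using Jensen's inequality (concavity of $\sqrt{\cdot}$) together with Proposition \ref{Conv:A} then yields
\[
\E\bigl(\|\vc y_k-\vc y_l\|_{L^\infty(\Omega)}\bigr)\leq L\sqrt{\E(|\mathcal{V}_l|)}\leq L\,|\Omega|^{1/2}\tilde{c}_A^{l/2},
\]
which vanishes as $\min(k,l)\to\infty$.

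The main conceptual point, and the step one could easily overlook, is that the naive bound $\|f\|_{L^\infty}\leq L\max_D\diam(D)$ would require controlling the largest side-length of the remaining rectangles, something that Proposition \ref{Conv:A} does not directly provide since a component may have small area while remaining long and thin. The boundary-matching property of the convex integration building blocks is precisely what allows the Poincaré-type inradius bound and thereby converts the exponential area decay into exponential $L^\infty$ decay.
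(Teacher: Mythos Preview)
Your proof is correct and follows essentially the same strategy as the paper: both arguments exploit that the difference $\vc y_k-\vc y_l$ vanishes on the boundary of suitable rectangles, is uniformly Lipschitz (since $K$ and $K^{qc}$ are bounded), and is therefore controlled by the inradius and hence by the square root of the area. The only cosmetic difference is the decomposition: the paper (taking $k<l$) works on the newly transformed building blocks $\mathcal{B}_j^i\subset\mathcal{V}_k\setminus\mathcal{V}_l$, whereas you (taking $k\geq l$) work on the still-untransformed components $D\in\mathcal{C}(\mathcal{V}_l)$; both yield the bound $\|\vc y_k-\vc y_l\|_{L^\infty}\leq c\,|\mathcal{V}_{\min(k,l)}|^{1/2}$, and your explicit invocation of the inradius estimate and Jensen's inequality makes the argument slightly more transparent than the paper's terse one-line version.
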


\begin{rmk}
We emphasize that this corollary only constitutes the very first step of our analysis of the generated sequences $\{ \vc y_k\}_{k\in \N}$. In particular, the corollary does not yet ensure the convergence of the sequence $\{\vc y_k\}_{k\in \N}$ to a solution of the differential inclusion \eqref{eq:diff_incl}.
\end{rmk}

\begin{proof}
The first statement is clear from Proposition \ref{Conv:A}. For the second statement, we just notice that (supposing without loss of generality that $k<l$) 
\begin{align*}
\E \| \vc y_k -  \vc y_l\|_{L^\infty(\Omega)} &= \E\| \vc y_k -  \vc y_l\|_{L^\infty(\mathcal{V}_{k}\setminus\mathcal{V}_{l})}\\
&\leq c\E(\|\nabla \vc y_{k}-\nabla \vc y_{l}\|_{L^\infty(\Omega)}\left| \mathcal{V}_{k}\setminus\mathcal{V}_{l}\right|^{\frac{1}{2}})\leq c\tilde{c}_A^{\frac{k}{2}}|\Omega|,
\end{align*}
for some $c>0,$ and where we have used that, since $K,K^{qc}$ are bounded, $\nabla \vc y_{j}$ is bounded in $L^\infty(\Omega)$ for each $l\geq 0$. The claim thus follows.
\end{proof}

\subsection{Convergence of Model B}
In this section we prove the following result (which does not yet ensure that $\vc y_k$ converges to a solution of \eqref{eq:diff_incl}, see Theorems \ref{RegA} and \ref{thm:almostsure} for this).

\begin{prop}
\label{Conv:B}
Consider the Algorithm \ref{ModelB} (Model B), let $\mu$ be the probability measure
constructed in Lemma \ref{lem:extension} and let $\mathbb{E}(\cdot)$ denote the
expectation with respect to $\mu$.
Then for each $k\geq 0$, $\mathcal{V}_k$ is a random variable with respect to
$\mu$ and it holds that 
\begin{align*}
\mathbb{E}\left(|\mathcal V_{2k+1}| \right) \leq \tilde{c}_B \mathbb{E}\left(|\mathcal V_{k}| \right),
\end{align*}
where $\tilde{c}_B := \tilde{c}_A + (1-\tilde{c}_A)\frac{1+e^{-\frac12}}{2}\in(0,1)$ and $\tilde{c}_A$ is as in Proposition \ref{Conv:A}.
\end{prop}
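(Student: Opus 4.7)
The strategy is to condition on $\mathcal{V}_k$ and establish the pointwise bound $\mathbb{E}(|\mathcal{V}_{2k+1}| \mid \mathcal{V}_k) \leq \tilde{c}_B\,|\mathcal{V}_k|$, after which the claim follows by taking expectations. A preliminary structural observation is that each iteration of Algorithm~\ref{ModelB} removes a rectangular strip from a single connected component and leaves at most two rectangular pieces, so after $k$ steps $\mathcal{V}_k$ has at most $n\leq k+1$ connected components $D_1,\dots,D_n$ with volumes $v_i$ summing to $V:=|\mathcal{V}_k|$. I will classify the $D_i$ as \emph{large} if $v_i\geq V/(2(k+1))$ and \emph{small} otherwise; since there are at most $k+1$ small components, the total small volume $V_S$ satisfies $V_S<V/2$ and hence $V_L:=V-V_S>V/2$.

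For each index $i$, let $T_i\in\{1,\dots,k+1\}\cup\{\infty\}$ denote the smallest $j$ for which the point $p_{k+j}$ chosen by the algorithm lies in a descendant of $D_i$, with the convention $T_i=\infty$ if no such $j$ exists. The crucial observation is that on $\{T_i>j\}$ the rectangle $D_i$ has not been touched, so $|D_i\cap\mathcal{V}_{k+j}|=v_i$ while $|\mathcal{V}_{k+j}|\leq V$; hence the conditional probability of hitting $D_i$ at step $k+j+1$ is at least $v_i/V$. Iterating this bound across the $k+1$ subsequent steps yields the tail estimate
\[
\mathbb{P}(T_i=\infty\mid\mathcal{V}_k)\leq (1-v_i/V)^{k+1}\leq e^{-(k+1)v_i/V},
\]
which for large $i$ is bounded by $e^{-1/2}$. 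Moreover, on $\{T_i<\infty\}$ the sub-component chosen at step $k+T_i$ is precisely the still-intact rectangle $D_i$, so the single-rectangle estimate underlying the proof of Proposition~\ref{Conv:A} applies and gives $\mathbb{E}(|D_i\cap\mathcal{V}_{k+T_i}|\mid T_i<\infty,\mathcal{V}_k)\leq\tilde{c}_A v_i$; subsequent hits can only further reduce this quantity.

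Combining both events via the tower property produces
\[
\mathbb{E}(|D_i\cap\mathcal{V}_{2k+1}|\mid\mathcal{V}_k)\leq v_i\bigl[\tilde{c}_A+(1-\tilde{c}_A)\,\mathbb{P}(T_i=\infty\mid\mathcal{V}_k)\bigr],
\]
which for large $i$ is at most $v_i[\tilde{c}_A+(1-\tilde{c}_A)e^{-1/2}]$ and for small $i$ is trivially $\leq v_i$. Summing over $i$ produces the bound $V_L[\tilde{c}_A+(1-\tilde{c}_A)e^{-1/2}]+V_S$, and since the bracket is strictly less than $1$, this expression $V - V_L(1 - \tilde{c}_A - (1-\tilde{c}_A)e^{-1/2})$ is decreasing in $V_L$; its supremum over $V_L\in(V/2,V]$ equals the value at $V_L=V/2$, and a short algebraic manipulation identifies this limiting value with $\tilde{c}_B V$. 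Taking the outer expectation then completes the proof. The main technical point that I expect to require care is the proper handling of $T_i$ as a stopping time adapted to the filtration generated by the selection process $(p_j,d_j)$, together with the verification that on $\{T_i>j\}$ the rectangle $D_i$ really does appear intact in $\mathcal{V}_{k+j}$, so that the one-step reduction of Algorithm~\ref{ModelB} is legitimately $\tilde{c}_A$-bounded on a rectangle of full volume $v_i$ rather than on an arbitrarily smaller split-off sub-piece.
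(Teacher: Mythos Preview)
Your proposal is correct and follows essentially the same approach as the paper's proof: conditioning on $\mathcal{V}_k$, splitting the components into ``large'' ($v_i\geq V/(2(k+1))$) and ``small'', bounding the miss probability of a large component by $(1-v_i/V)^{k+1}\leq e^{-1/2}$, and using that there are at most $k+1$ small components so their total volume is below $V/2$. The only differences are presentational: you phrase the hitting argument via a stopping time $T_i$ and finish by optimizing over $V_L\in(V/2,V]$, whereas the paper packages the same computation as a bound on $\sum_j r_j(1-r_j)^{k+1}$ with $r_j=v_j/V$.
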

In order to work with a concise notation, we recall the concept of a descendant of a domain:

\begin{defi}[Def. 3.3 in \cite{RZZ18}]
\label{def descendants}
Let $\hat D\in \mathcal{C}(\mathcal{V}_{k})$ for some $k\geq 0.$ Then we say that $\check{D}\in \mathcal{C}(\mathcal{V}_{l})$ for some $l\geq k$ is a descendant of $\hat D$ if $\check{D}\subset\hat D.$ We denote the set of all descendants of $\hat{D}$ by $\mathcal{D}(\hat{D})$.
\end{defi} 

\begin{proof}[Proof of Proposition \ref{Conv:B}]
We know that, in the setting of Model \ref{ModelB}, $\mathcal{V}_{k}$ has at most $k+1$ connected components $D_k^j$. After $k+1$ iterations of the algorithm we thus obtain that
\[
\begin{split}
\mathbb{E}\left(|\mathcal V_{2k+1}| \Big| \mathcal V_k \right) &= \sum_{j} \mathbb{E}\left(\bigl|\mathcal{D}(D_k^j)\cap \mathcal{V}_{2k+1}\bigr| \Big| \mathcal V_k \right)\\
\leq \sum_{j} \bar p_j \mathbb{E}&\left(\bigl|\mathcal{D}(D_k^j)\cap \mathcal{V}_{2k+1}\bigr|\Big| \mathcal V_k \text{ and $\vc p_l\in D_k^j$ for some $l\in\{k+1,\dots,2k+1\}$}\right)\\
&+ \sum_{j} (1-\bar p_j) | D_k^j|,
\end{split}
\]
where $\bar p_j$ is the probability that $\vc p_l\in D_k^j$ for some $l\in\{k+1,\dots,2k+1\}$. This can be computed by noticing that 
$$
1-\bar p_j =  \prod_{l=k+1}^{2k+1} \left(1 - \frac{|D_k^j|}{|\mathcal{V}_{l-1}|} \right) \leq  \left(1 - \frac{|D_k^j|}{|\mathcal{V}_k|} \right)^{k+1},
$$
from which we obtain that
\begin{align*}
\bar{p}_j \geq 1- \left(1 - \frac{|D_k^j|}{|\mathcal{V}_k|} \right)^{k+1}=: p_j.
\end{align*}
Therefore, setting $\tilde{c}_{A}:= \min\left\{p + (1-p)(1-\delta),(1-p) + p(1-\delta)\right\}\in(0,1)$ and arguing as in the proof of Proposition \ref{Conv:A}, we deduce that 
$$
\mathbb{E}\left(\bigl|\mathcal{D}(D_k^j)\cap \mathcal{V}_{2k+1}\bigr|\Big| \mathcal V_k \text{ and $\vc p_l\in D_k^j$ for some $l\in\{k+1,\dots,2k+1\}$}\right) \leq \tilde{c}_A |D_k^j|,
$$
which allows us to
estimate
\begin{equation}
\label{estimate1}
\begin{split}
\mathbb{E}\left(|\mathcal V_{2k+1}| \Big| \mathcal V_k \right) 
&\leq \sum_{j} \left(\bar p_j\tilde{c}_A  +(1-\bar p_j) \right)| D_k^j| 
\\
&\leq \sum_{j} \left( p_j\tilde{c}_A  +(1- p_j) \right)| D_k^j| \leq |\mathcal{V}_k| + (\tilde{c}_A - 1)\sum_{j} p_j | D_k^j| 
\\
&\leq \tilde{c}_A|\mathcal{V}_k| + (1 - \tilde{c}_A)\sum_{j} \left(1 - \frac{|D_k^j|}{|\mathcal{V}_k|} \right)^{k+1} | D_k^j|.
\end{split}
\end{equation}
Let now $r_j:=\frac{|D_k^j|}{|\mathcal{V}_k|}.$ We now claim that 
\begin{equation}
\label{Claim}
\sum_{j} \left(1 - r_j \right)^{k+1} r_j \leq \hat{c}_B
\end{equation}
for some $\hat{c}_B \in (0,1).$ Indeed, let 
$$
J_1 := \left\{ j\colon r_j\geq \frac1{2(k+1)}\right\},\qquad J_2 := \left\{ j\colon r_j < \frac1{2(k+1)}\right\},
$$
and note that $J:= J_1 \cup J_2$ has $k+1$ elements.
We have
$$
\sum_{j\in J_1} \left(1 - r_j \right)^{k+1} r_j \leq \sum_{j\in J_1} \left(1 - \frac1{2(k+1)} \right)^{k+1} r_j \leq e^{-\frac 12} \sum_{j\in J_1} r_j ,
$$
and
$$
\sum_{j\in J_2} \left(1 - r_j \right)^{k+1} r_j \leq \sum_{j\in J_2} r_j .
$$
Since $\# J = k+1$,
$$
\sum_{j\in J_2} r_j \leq \frac1{2(k+1)}\sum_{j\in J_2} 1\leq \frac1{2(k+1)}\sum_{j\in J} 1\leq \frac 1 2,
$$
and since $\sum_{j\in J_1} r_j=1-\sum_{j\in J_2} r_j$, we have
\begin{align*}
\sum_{j \in J} \left(1 - r_j \right)^{k+1} r_j
&\leq \sum_{j \in J_1} \left(1 - r_j \right)^{k+1} r_j + \sum_{j \in J_2} \left(1 - r_j \right)^{k+1} r_j\\
& \leq e^{-\frac{1}{2}} \sum\limits_{j\in J_1} r_j + \sum\limits_{j\in J_2} r_j\\ 
&\leq e^{-\frac 12} + (1 - e^{-\frac 12} )\sum_{j\in J_2} r_j \leq e^{-\frac 12} + (1 - e^{-\frac 12} )\frac12,
\end{align*}
which is \eqref{Claim} with $\hat{c}_B:=\frac{1+e^{-\frac12}}2.$ Therefore, combining \eqref{estimate1} and \eqref{Claim} we deduce
$$
\mathbb{E}\left(|\mathcal V_{2k + 1}| \Big| \mathcal V_k \right) \leq | \mathcal V_k| (\tilde{c}_A + (1-\tilde{c}_A)\hat{c}_B )=:\tilde{c}_B| \mathcal V_k| .
$$
We remark that $\tilde{c}_B\in(0,1)$ is independent of $k$. Thus, taking expectation, we deduce
$$
\mathbb{E}\left(|\mathcal V_{2k + 1}| \right) \leq \tilde{c}_B\mathbb{E}\left(|\mathcal V_{k}| \right).
$$
\end{proof}

In particular, Proposition \ref{Conv:B} implies the desired convergence result.

\begin{cor}
\label{cor:B}
Algorithm \ref{ModelB} (Model B) converges in expectation. That means
$$
\lim_{k\to\infty} \mathbb{E}\left(|\mathcal{V}_k|\right) =0,\qquad \lim_{(k,l)\to(\infty,\infty)} \mathbb{E}\left(\|\vc y_k - \vc y_l\|_{L^\infty(\Omega)}\right)=0.
$$
\end{cor}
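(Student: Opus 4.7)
The plan is to adapt the proof of Corollary \ref{cor:A} to Algorithm \ref{ModelB}. The structural difference is that Proposition \ref{Conv:B} only yields geometric decay along the subsequence $k \mapsto 2k+1$; I would upgrade this to convergence of the full sequence by exploiting pathwise monotonicity.

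For the first limit, observe that by construction $\mathcal{V}_{k+1} = \mathcal{V}_{k} \setminus \overline{\mathcal{B}}_{k+1} \subset \mathcal{V}_k$ holds on every realization, so $k \mapsto |\mathcal{V}_k|$ is $\mu$-a.s.\ non-increasing, and hence so is $k\mapsto \E(|\mathcal{V}_k|)$. Defining $k_0 := 0$ and $k_{n+1} := 2 k_n + 1$, iterating Proposition \ref{Conv:B} gives $\E(|\mathcal{V}_{k_n}|) \leq \tilde{c}_B^{\,n} |\Omega|$. Since $\tilde{c}_B \in (0,1)$, this subsequence tends to zero, and the monotonicity just established then forces $\E(|\mathcal{V}_k|) \to 0$ as $k \to \infty$.

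For the second limit I would argue exactly as in the proof of Corollary \ref{cor:A}. For $k < l$ the modifications occurring between steps $k$ and $l$ take place only on $\mathcal{B}_{k+1} \cup \dots \cup \mathcal{B}_l$, which up to a null set coincides with $\mathcal{V}_k \setminus \mathcal{V}_l$; outside this set $\vc y_k \equiv \vc y_l$. By the boundary conditions of the building blocks from Theorem \ref{ThmCI} together with the fact that $\vc y_j \equiv \mt M \vc x$ on $\mathcal{V}_j$ for every $j$, the difference $\vc y_k - \vc y_l$ vanishes on the boundary of each connected component of $\mathcal{V}_k \setminus \mathcal{V}_l$. Since $K^{qc}$ is bounded, the gradients $\nabla \vc y_j$ are uniformly bounded in $L^\infty$, and the elementary two-dimensional inradius estimate $\dist(x,\partial A) \leq |A|^{1/2}/\sqrt{\pi}$ yields
\begin{equation*}
\|\vc y_k - \vc y_l\|_{L^\infty(\Omega)} \leq c\, \|\nabla \vc y_k - \nabla \vc y_l\|_{L^\infty(\Omega)}\, |\mathcal{V}_k \setminus \mathcal{V}_l|^{1/2} \leq C\, |\mathcal{V}_k|^{1/2}.
\end{equation*}
Taking expectations, applying Jensen's inequality and invoking the first limit concludes.

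Since the argument is essentially parallel to Corollary \ref{cor:A}, no substantial obstacle is expected; the only point requiring mild care is the interplay between the subsequential geometric decay of Proposition \ref{Conv:B} and the pathwise monotonicity of $|\mathcal{V}_k|$, but both facts hold on every realization so nothing delicate is hidden.
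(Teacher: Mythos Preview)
Your proposal is correct and follows essentially the same approach as the paper: both arguments use the subsequence $k_n = 2^n - 1$ (equivalently defined by $k_0 = 0$, $k_{n+1} = 2k_n + 1$), iterate Proposition \ref{Conv:B} along it, and then invoke the pathwise monotonicity of $|\mathcal{V}_k|$ to upgrade to the full sequence; for the second limit both defer to the argument of Corollary \ref{cor:A}. Your write-up is slightly more explicit (spelling out the inradius estimate and the use of Jensen), but there is no substantive difference.
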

\begin{proof}
Let $k\geq 1$, and $\bar{n},\bar{k}$ be defined by
$$
\bar{n} := \sup\left\{n\in\mathbb{N}\colon 2^n-1\leq k\right\} ,\qquad \bar{k} := 2^n-1.
$$
Then, by Proposition \ref{Conv:B} we have
$$
\mathbb{E}\left(|\mathcal{V}_k|\right) \leq \mathbb{E}\left(|\mathcal{V}_{\bar{k}}|\right)\leq \tilde{c}_B^{\bar{n}}|\Omega|,
$$
which implies the first claim. The second claim follows by arguing as in the case of Model A in Corollary \ref{cor:A}.
\end{proof}

\section{Regularity of the solutions}
\label{sec:reg_sol}

After having discussed the convergence in expectation of the algorithms from Model A and Model B in the previous section, we now study their expected higher  regularity properties. 

Again we begin by considering Model A first and then pass on to Model B.

\begin{thm}
\label{RegA}
There exists $\theta_A\in(0,1)$ such that, for each $(s,p)\in(0,1)\times[1,\infty)$ satisfying $sp < \theta_A$ we have that $\vc y_k$ constructed as in Algorithm \ref{ModelA} (Model A) satisfies
$$
\mathbb{E}\left(\|\nabla\vc y_k - \nabla\vc y_{k+1}\|_{W^{s,p}(\Omega)} \right )\leq C 2^{-k\alpha} 
$$
for some $C,\alpha>0$ depending on $\theta_A,p$ and $\mt M\in K^{qc}$ only.
\end{thm}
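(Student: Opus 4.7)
The plan is to first identify the structure of the difference $\nabla \vc y_{k+1}-\nabla \vc y_k$ in each realization, and then combine the $W^{s,p}$ scaling of the convex integration building blocks from Theorem \ref{ThmCI} with the expected-volume estimate from Proposition \ref{Conv:A}. On $\mathcal V_k$ the map $\vc y_k$ is still the affine $\mt M \vc x$ (the algorithm has not touched this set), while on each rectangle $\mathcal B_{k+1}^j\subset\mathcal V_k$ it is replaced by a convex integration solution $\vc z_{k+1}^j$ satisfying $\vc z_{k+1}^j=\mt M \vc x$ on $\partial \mathcal B_{k+1}^j$. Hence
\[
\nabla \vc y_{k+1}-\nabla \vc y_k = \sum_{j}(\nabla \vc z_{k+1}^j-\mt M)\chi_{\mathcal B_{k+1}^j},
\]
with disjoint supports and a uniform $L^\infty$ bound inherited from the fact that $\mathrm{Im}\,\nabla \vc z_{k+1}^j\subset K$ and $\mt M\in \mathrm{int}\,K^{qc}$.

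Next, I would estimate the $W^{s,p}(\Omega)$ norm of each summand via the Gagliardo-Slobodeckij formulation. Splitting the seminorm into a local part (both variables in $\mathcal B_{k+1}^j$) and a nonlocal ``tail'' part (one variable inside, one outside $\mathcal B_{k+1}^j$), the local contribution is bounded by $[\nabla \vc z_{k+1}^j]_{W^{s,p}(\mathcal B_{k+1}^j)}^p$, which via affine rescaling to the unit square (and the $W^{1+s,p}$ bound from Theorem \ref{ThmCI}, valid for $sp<\theta_0$) scales as $C|\mathcal B_{k+1}^j|^{(2-sp)/2}$, up to a constant depending on the aspect ratio. The tail contribution, using the $L^\infty$ bound together with $\int_{\R^2\setminus R}|x-y|^{-2-sp}dy\leq C\,\mathrm{dist}(x,\partial R)^{-sp}$, yields after one more integration a quantity of the same order $C|\mathcal B_{k+1}^j|^{(2-sp)/2}$. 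Summing over $j$, using $\sum_j|\mathcal B_{k+1}^j|\leq |\mathcal V_k|$ together with Hölder's inequality applied to the concave exponent $(2-sp)/2$ and the fact that there are at most $2^{k+1}$ rectangles,
\[
\|\nabla \vc y_{k+1}-\nabla \vc y_k\|_{W^{s,p}(\Omega)}^p \leq C\,2^{(k+1)sp/2}\,|\mathcal V_k|^{(2-sp)/2}.
\]

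Taking expectations and applying Jensen's inequality to the concave map $t\mapsto t^{(2-sp)/2}$ together with Proposition \ref{Conv:A} gives
\[
\mathbb{E}\bigl(\|\nabla \vc y_{k+1}-\nabla \vc y_k\|_{W^{s,p}(\Omega)}^p\bigr)\leq C\,2^{ksp/2}\,\tilde c_A^{k(2-sp)/2},
\]
and a further application of Jensen's inequality to pass from the $p$-th moment to the expectation yields the exponential decay rate $\bigl(2^{s/2}\tilde c_A^{(2-sp)/(2p)}\bigr)^k$. This rate is strictly less than one precisely when $sp$ lies below the threshold $\theta_A:=\min\bigl\{\theta_0,\ 2\log(\tilde c_A^{-1})/(\log 2+\log(\tilde c_A^{-1}))\bigr\}$, which is the desired $\theta_A\in (0,1)$ (or truncated to $(0,1)$ if the formula yields a larger value). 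The constant $C$ and the rate $\alpha>0$ depend only on $\theta_A$, $p$ and $\mt M$.

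The main obstacle I anticipate is controlling the aspect-ratio dependence of the constant in the scaling of $[\nabla \vc z_{k+1}^j]_{W^{s,p}(\mathcal B_{k+1}^j)}^p$: iteration of the algorithm produces rectangles whose aspect ratio can be large, while the non-degeneracy condition in Algorithm \ref{ModelA} only limits the aspect ratio of the \emph{newly inserted} rectangle relative to the ambient domain, not absolutely. Either one accepts a rectangle-dependent constant and then has to verify that the expected value remains exponentially small (using that very degenerate rectangles are increasingly rare), or one proves an anisotropic refinement of Theorem \ref{ThmCI} that decouples the two side-lengths. A secondary, technical issue is the nonlocal cross contribution between distinct $\mathcal B_{k+1}^j$ in the Slobodeckij seminorm; the $L^\infty$ tail bound above handles it but requires care when the rectangles are close, which is the reason for keeping the argument at the level of the $\chi_{\mathcal B_{k+1}^j}$-extensions rather than patching globally.
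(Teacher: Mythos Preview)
Your strategy matches the paper's proof almost step for step: identify $\nabla\vc y_{k+1}-\nabla\vc y_k$ as a sum of disjointly supported pieces on the rectangles $\mathcal B_{k+1}^j$, split the Gagliardo seminorm into a local part and a tail, control the local part by Theorem~\ref{ThmCI}, control the tail by the $L^\infty$ bound, sum via H\"older, and feed in Proposition~\ref{Conv:A}. The only substantive difference is in how the local bound is stated, and this is exactly the point where you flag the aspect-ratio obstacle.

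The paper's resolution of that obstacle is precisely the ``anisotropic refinement'' you conjecture, and it comes essentially for free from the construction in \cite{DPR}. Rather than rescaling to a unit square and incurring an aspect-ratio constant, the paper invokes the $L^\infty$--$L^1$--$BV$ interpolation inequality
\[
\|u\|_{W^{s,p}}\le \|u\|_{L^\infty}^{1-1/p}\bigl(\|u\|_{L^1}^{1-sp}\|u\|_{BV}^{sp}\bigr)^{1/p},
\]
together with the bounds $\|\nabla\vc z\|_{BV(\mathcal B)}\le C\,\Per(\mathcal B)$ and $\|\nabla\vc z\|_{L^1(\mathcal B)}\le C\,|\mathcal B|$ that hold for the convex-integration solution on any rectangle $\mathcal B$ (these are recorded in \cite[Lemma~7.1, Proposition~7.1]{DPR}). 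This gives directly
\[
\|\nabla\vc z_{k+1}^j-\mt M\|_{W^{s,p}(\mathcal B_{k+1}^j)}^p \le C\,\Per(\mathcal B_{k+1}^j)^{sp}\,|\mathcal B_{k+1}^j|^{1-sp},
\]
with no aspect-ratio constant. The crucial elementary observation is then that every $\mathcal B_{k+1}^j$ is an axis-parallel rectangle contained in $(0,1)^2$, so $\Per(\mathcal B_{k+1}^j)\le 4$ uniformly in $j$ and $k$. Summing over the at most $2^{k+1}$ rectangles and using the concavity of $t\mapsto t^{1-sp}$ yields
\[
\|\nabla\vc y_{k+1}-\nabla\vc y_k\|_{W^{s,p}(\Omega)}^p \le C\,2^{k\,sp}\,|\mathcal V_k|^{1-sp},
\]
after which Proposition~\ref{Conv:A} closes the argument with $\theta_A$ determined by $2^{\theta_A}\tilde c_A^{\,1-\theta_A}<1$. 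Note that this gives the pair of exponents $(2^{ksp},|\mathcal V_k|^{1-sp})$ rather than your $(2^{ksp/2},|\mathcal V_k|^{1-sp/2})$; your pair would only be valid under a uniform aspect-ratio bound, which is not available. The tail term is handled in the paper exactly as you describe, and also produces the $\Per^{sp}|\mathcal B|^{1-sp}$ scaling.
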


\begin{proof}
Let us first recall that for any $f\in W^{s,p}(\Omega)$ we have
$$
\|f \|_{W^{s,p}(\Omega)}^p = \|f\|_{L^p(\Omega)}^p + \int_\Omega\int_\Omega\frac{|f(\vc x) - f(\vc y)|^p}{|\vc x-\vc y|^{2+sp}}\,\mathrm{d}\vc x\,\mathrm{d}\vc y.
$$
Let us start by assuming that $sp<\theta_0$, where $\theta_0$ is as in Theorem \ref{ThmCI}. On the one hand, since $\nabla\vc y_l(x)\in K$ for $x\in \Omega \setminus\mathcal{V}_{l}$ and any $l\geq 0$ (and as $\nabla \vc y_k$ will not be changed along the iteration on that set any more) and as both $K$ and $K^{qc}$ are compact, we have
$$
\|\nabla\vc y_k - \nabla\vc y_{k+1}\|_{L^p(\Omega)}^p\leq c |\mathcal{V}_k\setminus\mathcal{V}_{k+1}|\leq c |\mathcal{V}_k|.
$$
On the other hand, setting $\mt v_k:=\nabla\vc y_k - \nabla\vc y_{k+1}$ and using that $\nabla \vc y_k = \nabla \vc y_{k+1}$ on $(\bigcup\limits_j \mathcal{B}_k^j)^c$, we observe that
\begin{align}
\label{eq:norm_nolocal}
\begin{split}
\int_\Omega&\int_\Omega\frac{\left|\mt v_k (\vc x) - \mt v_k (\vc y)\right|^p}{|\vc x-\vc y|^{2+sp}}\,\mathrm{d}\vc x\,\mathrm{d}\vc y \\
&\leq \sum_j\int_{\mathcal{B}_k^j}\int_{\mathcal{B}_k^j}\frac{\left|\mt v_k (\vc x) - \mt v_k (\vc y)\right|^p}{|\vc x-\vc y|^{2+sp}}\,\mathrm{d}\vc x\,\mathrm{d}\vc y + 2\sum_j\int_{\mathcal{B}_k^j}\int_{(\mathcal{B}_k^j)^c}\frac{\left|\mt v_k (\vc x) - \mt v_k (\vc y)\right|^p}{|\vc x-\vc y|^{2+sp}}\,\mathrm{d}\vc x\,\mathrm{d}\vc y .
\end{split}
\end{align}
The first term in \eqref{eq:norm_nolocal} can be bounded thanks to Theorem \ref{ThmCI}. Indeed,
$$
\sum_j\int_{\mathcal{B}_k^j}\int_{\mathcal{B}_k^j}\frac{\left|\mt v_k (\vc x) - \mt v_k (\vc y)\right|^p}{|\vc x-\vc y|^{2+sp}}\,\mathrm{d}\vc x\,\mathrm{d}\vc y \leq \sum_j \left( \|\nabla \vc y_{k+1} \|_{W^{s,p}(\mathcal{B}_k^j)}^p +  \|\nabla \vc y_{k} \|_{W^{s,p}(\mathcal{B}_k^j)}^p\right).
$$
Now building on the interpolation estimate (see \cite[Corollary 3]{RZZ16})
\begin{align}
\label{eq:inter_old1}
\|u\|_{W^{s,p}(\Omega)} \leq \|u\|_{L^{\infty}(\Omega)}^{1- \frac{1}{p}}(\|u\|_{L^1(\Omega)}^{1-\tilde{\theta}_0} \|u\|_{BV(\Omega)}^{\tilde{\theta}_0})^{\frac{1}{p}},
\end{align}
with $\tilde{\theta}_0 = sp$, as well as the estimates (see \cite[Proposition 7.1]{DPR} and \cite[Lemma 7.1]{DPR}, where in the latter $|\Omega|$ has to be replaced by $\Per(\Omega)$)
\begin{align}
\label{eq:inter_old2}
\begin{split}
\|\nabla \vc u_k \|_{BV(\Omega)} &\leq C 2^k \Per(\Omega),\\
\|\nabla \vc u_k\|_{L^1(\Omega)} & \leq C c^k |\Omega|,
\end{split}
\end{align}
for some constant $c \in (0,1)$ which is independent of $\Omega$ and where $\vc u_k$ is the deformation from \cite{DPR}, we obtain that for our deformation, by combining \eqref{eq:inter_old1} and \eqref{eq:inter_old2}, we have for $sp < \theta_0$ (where $\theta_0>0$ is the regularity threshold from \cite{DPR} and Theorem \ref{ThmCI})
\begin{align}
\label{eq:apriori_bd}
\|\nabla \vc y_k\|_{W^{s,p}(\mathcal{B}_k^j)} + \|\nabla \vc y_{k+1}\|_{W^{s,p}(\mathcal{B}_k^j)}\leq C \Per(\mathcal{B}_k^j)^{s} |\mathcal{B}_k^j|^{\frac{1}{p}-s}.
\end{align}
Hence, Cauchy-Schwarz and the fact that in the $k$-th iteration step there are $2^k$ sets $\mathcal{B}_k^j$ in which $\vc y_k$ is modified, implies that
\begin{align*}
\sum_j\int_{\mathcal{B}_k^j}\int_{\mathcal{B}_k^j}\frac{\left|\mt v_k (\vc x) - \mt v_k (\vc y)\right|^p}{|\vc x-\vc y|^{2+sp}}\,\mathrm{d}\vc x\,\mathrm{d}\vc y
&\leq \sum\limits_{j} \|\nabla \vc y_k \|_{W^{s,p}(\mathcal{B}_k^j)}^p  
\leq \Per(\Omega)^{sp}\sum\limits_{j=1}^{2^k} |\mathcal{B}_k^j|^{1-sp} \\
&\leq \Per(\Omega)^{sp} 2^{k sp} |\mathcal{V}_k \setminus \mathcal{V}_{k+1}|^{1-sp}.
\end{align*}

Regarding the second term in \eqref{eq:norm_nolocal}, exploiting the boundedness of the $\mt v_k$, we have for each $j$ that
\begin{align*}
&\int_{\mathcal{B}_k^j}\int_{(\mathcal{B}_k^j)^c}\frac{\left|\mt v_k (\vc x) - \mt v_k (\vc y)\right|^p}{|\vc x-\vc y|^{2+sp}}\,\mathrm{d}\vc y\,\mathrm{d}\vc x\leq c\int_{\mathcal{B}_k^j}\int_{(\mathcal{B}_k^j)^c}\frac{1}{|\vc x-\vc y|^{2+sp}}\,\mathrm{d}\vc y\,\mathrm{d}\vc x
\\
&\leq c\int_{\mathcal{B}_k^j}\int_{\left(B\left(\vc x,\dist(\vc x,\partial\mathcal{B}_k^j)\right)\right)^c}\frac{1}{|\vc x-\vc y|^{2+sp}}\,\mathrm{d}\vc y\,\mathrm{d}\vc x
\\
&\leq c\int_{\mathcal{B}_k^j}\int_{\dist(\vc x,\partial\mathcal{B}_k^j)}^\infty\frac{1}{r^{1+sp}}\,\mathrm{d}r\,\mathrm{d}\vc x
\\
&\leq c\Per(\mathcal{B}_k^j)\min\{\ell_1^j,\ell_2^j\}^{1-sp}
\\
&\leq c\Per(\mathcal{B}_k^j)^{sp}|\mathcal{B}_k^j|^{{1-sp}}.
\end{align*}
Here the estimate in the second to last line is a consequence of the following considerations:
Splitting 
\begin{align*}
\int_{\mathcal{B}_k^j}\int_{\dist(\vc x,\partial\mathcal{B}_k^j)}^\infty\frac{1}{r^{1+sp}}\,\mathrm{d}r\,\mathrm{d}\vc x
&= \sum\limits_{i=1}^2 \int_{\Delta_{j,k}^{i}}\int_{\dist(\vc x,\partial\mathcal{B}_k^j)}^\infty\frac{1}{r^{1+sp}}\,\mathrm{d}r\,\mathrm{d}\vc x \\
& \quad + \sum\limits_{i=1}^2\int_{T_{j,k}^i}\int_{\dist(\vc x,\partial\mathcal{B}_k^j)}^\infty\frac{1}{r^{1+sp}}\,\mathrm{d}r\,\mathrm{d}\vc x,
\end{align*}
where $\Delta_{j,k}^i$ denote the triangles and $T_{j,k}^i$ the trapezoids from Figure \ref{fig:distances}. Further,
in the triangle $\D_{j,k}^i$ in Figure \ref{fig:distances} we estimate from above by
  the estimate over the rectangle $[0,\ell_2^j]\times [0,\ell_2^j]$:
\begin{align*}
\int_{\Delta_{j,k}^{i}}\int_{\dist(\vc x,\partial\mathcal{B}_k^j)}^\infty\frac{1}{r^{1+sp}}\,\mathrm{d}r\,\mathrm{d}\vc x \leq   \ell_2^j \int_0^{\ell_2^j} x_1^{-sp} dx_1= \frac{1}{1-sp} \ell_2^j (\ell_2^j)^{1-sp},
\end{align*}
where we used that the integral with respect to $r$ is given by a constant times $\text{dist}^{-sp}$.
Similarly, for the trapezoids on the bottom in Figure \ref{fig:distances} we similarly estimate by
\begin{align*}
\int_{T_{j,k}^i}\int_{\dist(\vc x,\partial\mathcal{B}_k^j)}^\infty\frac{1}{r^{1+sp}}\,\mathrm{d}r\,\mathrm{d}\vc x \leq  \ell_1^j \int_0^{\ell_2^j}x_2^{-sp} dx_2 =\frac{1}{1-sp} \ell_1^j (\ell_2^j)^{1-sp}.
\end{align*}
Thus, the integral is controlled by
\begin{align*}
  \max(\ell_1^j,\ell_2^j)\min(\ell_1^j, \ell_2^j)^{1-sp}\leq \Per(\mathcal{B}_k^j)\min(\ell_1^j, \ell_2^j)^{1-sp}.
\end{align*}
\begin{figure}[htbp]
  \centering
  \includegraphics[width=0.7\linewidth,page=12]{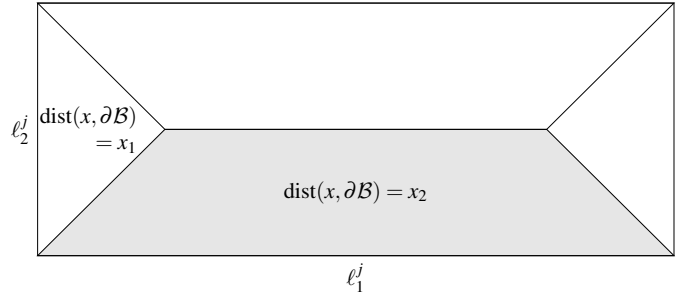}
  \caption{In order to estimate the $W^{s,p}$ seminorm over
    $\mathcal{B}_k^j\times \mathcal{B}_k^j$ we split the rectangle into the
    depicted four regions. In each of these regions the distance to the boundary is
    explicitly given in terms of the Cartesian coordinates.}
  \label{fig:distances}
\end{figure}

Thus, by H\"older's inequality and \eqref{eq:apriori_bd} we infer
\begin{align*}
&\sum_j\int_{\mathcal{B}_k^j}\int_{(\mathcal{B}_k^j)^c}\frac{\left|\mt v_k (\vc x) - \mt v_k (\vc y)\right|^p}{|\vc x-\vc y|^{2+sp}}\,\mathrm{d}\vc x\,\mathrm{d}\vc y 
\leq c\sum_j\Per(\mathcal{B}_k^j)^{sp}|\mathcal{B}_k^j|^{{1-sp}}\\
&\leq c\left(\sum_j\Per(\mathcal{B}_k^j)\right)^{sp}\left(\sum_j|\mathcal{B}_k^j|\right)^{1-sp}
\leq c\left(2^k\right)^{sp}|\mathcal{V}_k|^{1-sp}.
\end{align*}
By collecting all the above estimates we obtain for $c=c(\Per(\Omega))>0$
\begin{equation}
\label{stima big}
\|\nabla\vc y_k - \nabla\vc y_{k+1}\|_{W^{s,p}(\Omega)}^p\leq c  2^{k sp} |\mathcal{V}_k \setminus \mathcal{V}_{k+1}|^{1- sp} + c 2^{k sp} |\mathcal{V}_k|^{1-sp},
\end{equation}
which by taking the expected value and by Proposition \ref{Conv:A} becomes
\begin{align*}
\mathbb{E}\left(\|\nabla\vc y_k - \nabla\vc y_{k+1}\|_{W^{s,p}(\Omega)}^p \right) 
&\leq c \left(2^k\right)^{sp}\left(\tilde{c}_A^k |\Omega|\right)^{{1-sp}}.
\end{align*}
Therefore, choosing  $\theta_A := sp\in(0,\theta_0)$ such that $2^{\theta_A}\cdot\tilde{c}_A^{1-\theta_A} <1$, we deduce the existence of $\alpha>0$ satisfying 
$$
\mathbb{E}\left(\|\nabla\vc y_k - \nabla\vc y_{k+1}\|_{W^{s,p}(\Omega)}^p \right) 
\leq c 2^{-\alpha k}.
$$
\end{proof}

Arguing similarly as for Model A, regarding Model B we have: 

\begin{thm}
  \label{thm:ModelB}
There exists $\theta_B\in(0,1)$ such that, for each $(s,p)\in(0,1)\times[1,\infty)$ satisfying $sp < \theta_B$ we have that $\vc y_k$ constructed as in Algorithm \ref{ModelB} (Model B) satisfies
$$
\mathbb{E}\left(\|\nabla\vc y_k - \nabla\vc y_{2k+1}\|_{W^{s,p}(\Omega)} \right )\leq C 2^{-\alpha\log_2(k+1)} 
$$
for any $k\geq 1$, and for some $C,\alpha>0$ depending on $\theta_A,p$ and $\mt M\in K^{qc}$ only.
\end{thm}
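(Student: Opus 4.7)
The plan is to follow the architecture of the proof of Theorem~\ref{RegA}, i.e.\ split the $W^{s,p}$-seminorm of $\mt v := \nabla\vc y_k-\nabla\vc y_{2k+1}$ into ``local'' contributions on each newly inserted rectangle and ``nonlocal'' cross terms, estimate both via the building-block bound \eqref{eq:apriori_bd}, and combine by H\"older before taking expectations. There are however two decisive structural differences with Model A that drive the (weaker) rate. First, passing from step $k$ to step $2k+1$ in Algorithm~\ref{ModelB} introduces only $k+1$ new rectangles $\mathcal{B}_{k+1},\dots,\mathcal{B}_{2k+1}$ (one per step), rather than the $2^k$ rectangles of a single Model A step. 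Second, by Proposition~\ref{Conv:B} the expected measure $\mathbb{E}(|\mathcal{V}_k|)$ decays only polynomially in $k$, and not exponentially; this is ultimately responsible for the fact that the final rate is polynomial, which is precisely what $2^{-\alpha\log_2(k+1)}=(k+1)^{-\alpha}$ encodes.

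For the rectangle-by-rectangle estimate I would use that $\mathcal{B}_{k+1},\dots,\mathcal{B}_{2k+1}$ are pairwise disjoint (each $\mathcal{B}_l$ is carved out of $\mathcal{V}_{l-1}\subset\mathcal{V}_k$) and that $\mt v$ vanishes outside their union. On each $\mathcal{B}_l$, the iterative definition of $\vc y_k$ forces $\vc y_k=\mt M\vc x$ (since $\mathcal{B}_l\subset\mathcal{V}_k$ has not yet been transformed at step $k$), while $\vc y_{2k+1}=\vc z_l$, so \eqref{eq:apriori_bd} applied to $\vc z_l$ from Theorem~\ref{ThmCI} gives $\|\mt v\|_{W^{s,p}(\mathcal{B}_l)}^p \leq c\,\Per(\mathcal{B}_l)^{sp}|\mathcal{B}_l|^{1-sp}$. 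For the nonlocal term I reuse verbatim the Cartesian-geometry computation from the proof of Theorem~\ref{RegA} illustrated in Figure~\ref{fig:distances}, obtaining the same $\Per(\mathcal{B}_l)^{sp}|\mathcal{B}_l|^{1-sp}$-bound on $\int_{\mathcal{B}_l}\int_{(\mathcal{B}_l)^c}$; any cross contribution between two distinct $\mathcal{B}_l,\mathcal{B}_m$ is absorbed into this integral since $\mathcal{B}_m\subset(\mathcal{B}_l)^c$.

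To combine the $k+1$ summands I apply H\"older with dual exponents $1/(sp)$ and $1/(1-sp)$,
\[
\sum_{l=k+1}^{2k+1}\Per(\mathcal{B}_l)^{sp}|\mathcal{B}_l|^{1-sp}\leq \Bigl(\sum_l\Per(\mathcal{B}_l)\Bigr)^{sp}\Bigl(\sum_l|\mathcal{B}_l|\Bigr)^{1-sp}\leq c(k+1)^{sp}|\mathcal{V}_k|^{1-sp},
\]
using $\Per(\mathcal{B}_l)\leq 4$ and $\bigsqcup_l\mathcal{B}_l\subset\mathcal{V}_k$. Together with the trivial $L^p$-bound $\|\mt v\|_{L^p(\Omega)}^p\leq c|\mathcal{V}_k|$, taking expectations and applying Jensen's inequality to the concave map $t\mapsto t^{1-sp}$ together with Corollary~\ref{cor:B} yields
\[
\mathbb{E}\bigl(\|\mt v\|_{W^{s,p}(\Omega)}^p\bigr)\leq c(k+1)^{sp}\,\mathbb{E}(|\mathcal{V}_k|)^{1-sp}\leq C(k+1)^{sp-\beta(1-sp)},
\]
with $\beta:=-\log_2\tilde{c}_B>0$. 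Setting $\theta_B:=\beta/(1+\beta)$ makes the exponent strictly negative whenever $sp<\theta_B$, and a final Jensen step (convexity of $t\mapsto t^{p}$) converts this into the desired bound on $\mathbb{E}(\|\mt v\|_{W^{s,p}})$ with $\alpha=(\beta(1-sp)-sp)/p>0$.

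The main obstacle, as I see it, is precisely the slow, polynomial decay of $\mathbb{E}(|\mathcal{V}_k|)$: the recursion in Proposition~\ref{Conv:B} is inherently of the form $k\mapsto 2k+1$, so one can only iterate it $\sim\log_2(k+1)$ times between steps $0$ and $k$, which is what forces the logarithmic exponent on the right-hand side of the theorem. A second, more delicate point is to apply Jensen's inequality in the correct direction: since $1-sp\in(0,1)$ one needs concavity of $t\mapsto t^{1-sp}$ to pass from $\mathbb{E}(|\mathcal{V}_k|^{1-sp})$ to $\mathbb{E}(|\mathcal{V}_k|)^{1-sp}$. Finally, one must check carefully that only $k+1$ rectangles (and not the cumulative $2k+1$ rectangles since step $0$) enter the H\"older bound---otherwise the prefactor would be $(2k+1)^{sp}$, which is only harmless because it is still polynomial, but the cleaner $(k+1)^{sp}$ is what nicely matches the form $2^{-\alpha\log_2(k+1)}$ of the statement.
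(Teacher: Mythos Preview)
Your proposal is correct and follows essentially the same approach as the paper: decompose into local and nonlocal contributions on the $k+1$ newly inserted rectangles $\mathcal{B}_{k+1},\dots,\mathcal{B}_{2k+1}$, apply the building-block estimate \eqref{eq:apriori_bd} and the geometric computation of Figure~\ref{fig:distances}, combine via H\"older to get $c(k+1)^{sp}|\mathcal{V}_k|^{1-sp}$, and then take expectations using the polynomial decay from (the proof of) Corollary~\ref{cor:B}. You are in fact more explicit than the paper on two points the paper leaves tacit---the Jensen step $\mathbb{E}(|\mathcal{V}_k|^{1-sp})\leq\mathbb{E}(|\mathcal{V}_k|)^{1-sp}$ and the final passage from $\mathbb{E}(\|\cdot\|^p)$ to $\mathbb{E}(\|\cdot\|)$---and the only minor omission is that $\theta_B$ must also satisfy $\theta_B\leq\theta_0$ so that \eqref{eq:apriori_bd} applies, i.e.\ one should take $\theta_B=\min\{\theta_0,\beta/(1+\beta)\}$.
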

We remark that for $k=2^l-1$ it holds that $2k+1 = 2^{l+1}-1$ and
  $2^{-\alpha \log_2(k+1)}=2^{-\alpha l}$. Hence, we may consider the
  subsequence $\tilde{y}_{l}:= y_{2^l-1}$ to obtain an estimate of the same form
  as in Theorem \ref{RegA}.
\begin{proof}
The proof follows the approach devised in the proof of Theorem \ref{RegA}. Again, we start by assuming that $sp<\theta_0$, where $\theta_0$ is as in Theorem \ref{ThmCI}. Then, by arguing as in the proof of Theorem
 \ref{RegA} we deduce
\begin{align*}
\|\nabla\vc y_k - \nabla\vc y_{2k+1}\|_{W^{s,p}(\Omega)}^p
&\leq c \Per(\Omega)^{sp}(k+1)^{sp} |\mathcal{V}_{k} \setminus \mathcal{V}_{2k+1}|^{1-sp} \\
& \quad + c\left(\sum_{j=k+1}^{2k+1}\Per(\mathcal{B}_j)\right)^{sp}|\mathcal{V}_k|^{1-sp}
\\
&\leq c \Per(\Omega)^{sp} (k+1)^{sp} |\mathcal{V}_{k} \setminus \mathcal{V}_{2k+1}|^{1-sp}\\
& \quad + c\left(k+1\right)^{sp}|\mathcal{V}_k|^{1-sp},
\end{align*}
where, in order to infer the estimate for the $W^{s,p}$ semi-norm, we bound the difference of $\nabla\vc y_k - \nabla\vc y_{2k+1}$ in the different $\mathcal{B}_j$, with $j\in\{k+1,\dots,2k+1\}$, rather than in the sets $\mathcal{B}_k^j.$ Thus, taking the expected value and using the estimate from Proposition \ref{Conv:B}, we arrive at
\begin{align*}
\mathbb{E}\left(\|\nabla\vc y_k - \nabla\vc y_{2k+1}\|_{W^{s,p}(\Omega)}^p \right) 
&\leq c \left((\tilde{c}_B^{\bar{n}}|\Omega|)^{1-sp} (k+1)^{sp}  + \left(k+1\right)^{sp}\left(\tilde{c}_B^{\bar{n}} |\Omega|\right)^{{1-sp}} \right)
\\
&\leq c (2^{\theta_B}\cdot\tilde{c}_B^{\frac{\bar n}n(1-\theta_B)})^n,
\end{align*}
where $n:=\log_2(k+1)$, $\bar{n}:=\left\lfloor n \right\rfloor$ and $\theta_B = sp$. Since $\frac{\bar n}n\geq\frac12$, whenever $k\geq 1$, by choosing $\theta_B:=sp\in(0,\theta_0)$ such that $2^{\theta_B}\cdot\tilde{c}_B^{\frac12(1-\theta_B)} <1$, we thus infer the claimed result.
\end{proof}

\section{Almost sure convergence and higher regularity}
\label{sec:prob}

In Sections \ref{RegA}-\ref{sec:reg_sol} we have established estimates on
solutions, their regularity and their convergence in expectation. As a consequence of these
results we further obtain convergence along sequences for $\mu$-almost every sequence produced by the algorithms.

The following theorem converts the results on expectations of Theorem \ref{RegA}
into a statement on $\mu$-almost every sequence. 

\begin{thm}
  \label{thm:almostsure}
  Consider Algorithm \ref{ModelB} and let $\mu$ be as in Lemma
  \ref{lem:extension} (or Algorithm \ref{ModelA} and $\mu$ as in Lemma
\ref{lem:extensionA}).
  Then there exists $\alpha'>0$ such that for $\mu$-almost every sequence $(x_1,d_1, \dots)$ obtained in the
  algorithm, there exists $K<\infty$ (depending on the sequence) such that for all $k\geq
  K$ it holds that
  \begin{align}
    \label{eq:4}
    \|\nabla\vc y_k - \nabla\vc y_{k+1}\|_{W^{s,p}(\Omega)} \leq \tilde{C} 2^{-\alpha' k}, 
  \end{align}
  for some constant $\tilde{C}$ independent of $k$ (in the case of Algorithm
  \ref{ModelA} we estimate $\nabla \vc y_{2^k-1} - \nabla\vc y_{2^{k+1}-1}$ instead).
  In particular, $\mu$-almost every generated sequence $\{\vc y_k\}_{k\in \N}$ is Cauchy in $W^{1+s,p}(\Omega)$ and has a limit $\vc y \in
  W^{1+s,p}(\Omega)$. The function $\vc y$ satisfies the differential inclusion problem \eqref{eq:diff_incl1}.
\end{thm}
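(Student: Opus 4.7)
The plan is to upgrade the expectation bounds of Theorems \ref{RegA} and \ref{thm:ModelB} to statements valid for $\mu$-almost every sequence by means of Markov's inequality and the first Borel--Cantelli lemma. Concretely, for Algorithm \ref{ModelA} set $E_k := \{\|\nabla \vc y_k - \nabla \vc y_{k+1}\|_{W^{s,p}(\Omega)} > 2^{-\alpha' k}\}$; then Markov's inequality applied to the $p$-th power gives
\begin{align*}
\mu(E_k) \leq 2^{p\alpha' k}\, \E\bigl(\|\nabla \vc y_k - \nabla \vc y_{k+1}\|_{W^{s,p}(\Omega)}^p\bigr) \leq C\, 2^{-(\alpha - p\alpha')k}.
\end{align*}
Choosing $\alpha' \in (0, \alpha/p)$ makes the right-hand side summable in $k$, so by Borel--Cantelli the set on which $E_k$ occurs infinitely often has $\mu$-measure zero. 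On its full-measure complement, the estimate \eqref{eq:4} holds from some random index $K$ on. For Algorithm \ref{ModelB} the same argument is run along the subsequence prescribed by Theorem \ref{thm:ModelB} (equivalently $\tilde{\vc y}_l := \vc y_{2^l - 1}$), for which the geometric decay is available.

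Summability of $\sum_k 2^{-\alpha' k}$ then implies that $\{\nabla \vc y_k\}$ is Cauchy in $W^{s,p}(\Omega;\R^{2\times 2})$, whence $\{\vc y_k\}$ is Cauchy in $W^{1+s,p}(\Omega;\R^2)$ and converges to a limit $\vc y \in W^{1+s,p}(\Omega;\R^2)$. Since the wells $K$ and the quasiconvex hull $K^{qc}$ are compact, $\{\nabla \vc y_k\}$ is uniformly bounded in $L^\infty$, so a standard lower semicontinuity argument places $\vc y \in W^{1,\infty}(\Omega;\R^2)$ as well.

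To identify $\vc y$ as a solution of \eqref{eq:diff_incl1}, I apply Markov and Borel--Cantelli a second time, now to the exponential bounds $\E(|\mathcal{V}_k|) \leq \tilde{c}^k$ of Propositions \ref{Conv:A} and \ref{Conv:B}. On a set of full $\mu$-measure one has $|\mathcal{V}_k| \to 0$, and hence (after passing to a further a.e.-convergent subsequence using the $W^{1+s,p}$ convergence) $\nabla \vc y_k \to \nabla \vc y$ a.e.\ in $\Omega$. By construction $\nabla \vc y_k(\vc x) \in K$ whenever $\vc x \in \Omega \setminus \mathcal{V}_k$, and once $\vc x$ has left $\mathcal{V}_k$ the gradient at $\vc x$ is never modified again, so $\nabla \vc y(\vc x) = \nabla \vc y_k(\vc x) \in K$ for a.e.\ $\vc x \in \Omega \setminus \mathcal{V}_k$. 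Sending $k \to \infty$ yields $\nabla \vc y \in K$ a.e.\ in $\Omega$. The boundary condition $\vc y = \mt M \vc x$ on $\partial \Omega$ passes to the limit since every $\vc y_k$ satisfies it by construction and the trace operator is continuous on $W^{1,p}(\Omega)$.

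The main obstacle is reconciling the two Borel--Cantelli conclusions (one for the Sobolev differences, one for the volumes $|\mathcal{V}_k|$) with the subsequence restriction in Algorithm \ref{ModelB}: countable intersections of $\mu$-full-measure sets remain of full measure, and the monotonicity $\mathcal{V}_{k+1} \subset \mathcal{V}_k$ (together with uniform $L^\infty$ bounds) lets one transfer pointwise convergence of $\nabla \vc y_{2^l-1}$ to the whole sequence, so that the final limit $\vc y$ is the same object regardless of which subsequence is used for the Cauchy argument.
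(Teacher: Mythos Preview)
Your proposal is correct and follows essentially the same approach as the paper: both apply Markov/Chebychev to the expectation bounds of Theorems~\ref{RegA} and~\ref{thm:ModelB}, use summability of the resulting tail (you invoke Borel--Cantelli by name, the paper constructs the exceptional sets $U_K$ by hand), and then repeat the argument for $\mathbb{E}(|\mathcal{V}_k|)$ via Propositions~\ref{Conv:A} and~\ref{Conv:B} to force $\nabla\vc y\in K$ almost everywhere. The only cosmetic difference is that you apply Markov to the $p$-th power (which is what the proof of Theorem~\ref{RegA} actually bounds) whereas the paper applies it to the first moment as stated; either works.
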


\begin{proof}[Proof of Theorem \ref{thm:almostsure}]
  In Theorem \ref{RegA} we had shown that there exists $\alpha>0$ such that
  \begin{align*}
    \mathbb{E}(\|\nabla\vc y_k - \nabla\vc y_{k+1}\|_{W^{s,p}(\Omega)}) \leq \tilde{C} 2^{-\alpha k}.
  \end{align*}
  Let now $0<\alpha'<\alpha$ and define $C>1$ such that $2^{-\alpha' k}= C^{k}
  2^{-\alpha k}$.
  Then by Chebychev's inequality it holds that
  \begin{align*}
    \mu(\{(x_1,d_1,\dots): \|\nabla\vc y_k - \nabla\vc y_{k+1}\|_{W^{s,p}(\Omega)}\geq  2^{-\alpha'k}\}) \leq \tilde{C} C^{-k}.
  \end{align*}
  In particular, given $K\in \N$ we may define the exceptional sets
  \begin{align*}
    U_K=\bigcup_{k\geq K} \{(x_1,d_1,\dots): \|\nabla\vc y_k - \nabla\vc y_{k+1}\|_{W^{s,p}(\Omega)}\geq  2^{-\alpha'k}\}
  \end{align*}
  and by subadditivity of the measure and the geometric series we obtain that
  \begin{align*}
    \mu(U_K)\leq \tilde{C} \sum_{k\geq K} C^{-k}= \tilde{C} C^{-K} \frac{1}{1-C^{-1}}.
  \end{align*}
  Thus for every $\epsilon>0$ we may find $K\in \N$ sufficiently large such
  that $\mu(U_K)\leq \epsilon$ and therefore 
  \begin{align*}
    W_{\epsilon}:= (\Omega\times \{1,2\})^{\N}\setminus U_K
  \end{align*}
  has measure at least $1-\epsilon$ and by construction of $W_{\epsilon}$ the estimate \eqref{eq:4}
  holds for each sequence in $W_{\epsilon}$ for all $k\geq K$.

  Let now $\epsilon_j$ be some sequence with $\epsilon_j\rightarrow 0$ and define 
  \begin{align*}
    W^{\star}= \bigcup_{j} W_{\epsilon_j}.
  \end{align*}
  This set has full measure since
  \begin{align*}
    \mu((\Omega\times \{1,2\})^{\N} \setminus W^{\star})
    \leq \inf_{j} \mu((\Omega\times \{1,2\})^{\N} \setminus W_{\epsilon_j}) \leq \inf_{j} \epsilon_j =0,
  \end{align*}
  and therefore any sequence $\mu$-almost surely is in $W^{\star}$.
  By construction, for any $(x_1,d_1, \dots) \in W^{\star}$ there exists
  $\epsilon_{j}$  and hence $K$ such that \eqref{eq:4} is valid for $k\geq K$.
  
    As a consequence, $ \nabla \vc y_k \rightarrow \nabla \vc y$ in $W^{s,p}(\Omega)$ for $\mu$-almost every sequence constructed in Algorithm \ref{ModelA} and $\vc y \in W^{1+s,p}(\Omega)\cap W^{1,\infty}(\Omega)$ $\mu$-almost surely.
  
  In order to observe that $\nabla \vc y \in K$ for $\mu$-almost every sequence $(x_1,d_1,\dots)$, we argue analogously: By Proposition \ref{Conv:A} we have that $\mathbb{E}(|\mathcal{V}_k|) \leq c^k$ for some $c\in (0,1)$. Then, as above, Chebychev's inequality again yields that for some $d>1$ 
\begin{align*}
\mu(\{(x_1,d_1,x_2,d_2,\dots): \ |\mathcal{V}_k|\geq d^k c^k\})\leq d^{-k}.
\end{align*}
Given $K \in \N$, we again define exceptional sets 
\begin{align*}
\tilde{U}_K = \bigcup\limits_{k\geq K}\{(x_1,d_1,\dots): \ |\mathcal{V}_k|\geq (dc)^{k}\},
\end{align*}
and obtain that
\begin{align*}
\mu(\tilde{U}_K) \leq d^{-k} \frac{1}{1-d^{-1}}.
\end{align*}  
Defining sets $\tilde{W}_{\epsilon}$ and $\tilde{W}^{\star}$ as above, and noticing that the union of two null sets is again a null set, concludes the proof in the case of Model A. 

  The result for Algorithm \ref{ModelB} follows analogously from Theorem \ref{thm:ModelB}.
\end{proof}

\begin{proof}[Proof of Theorem \ref{thm:main}]
The proof of Theorem \ref{thm:main} is an immediate consequence of Theorem \ref{thm:almostsure}.
\end{proof}

\section{On large aspect ratios and tail estimates}
\label{sec:tail}

As remarked in Section \ref{sec:models} in our Algorithms \ref{ModelA} and
\ref{ModelB} we opted to completely cover the rectangle $D_k^j$, if
\begin{align*}
  \ell^j_{d^j_k}\geq \delta^{-1} \ell^j_{(d^j_k)^\perp},
\end{align*}
that is, if the length of the rectangle in the direction $\vc e_{d_k^j}$ we picked is too long.
This cut-off simplifies the covering arguments and allows us to more easily deduce
uniform bounds on volume fractions which are iteratively covered (as proved for instance in Propositions \ref{Conv:A} and \ref{Conv:B}).

In the following we show that for a slightly modified version of our algorithm
such a cut-off is \emph{not} required. These modifications are made precise in Algorithm \ref{ModelAmod} below and are illustrated in
Figure \ref{fig:Amod}. All other, not explicitly defined quantities are defined in the same way as in Algorithm \ref{ModelA}. 

\begin{figure}[thb]
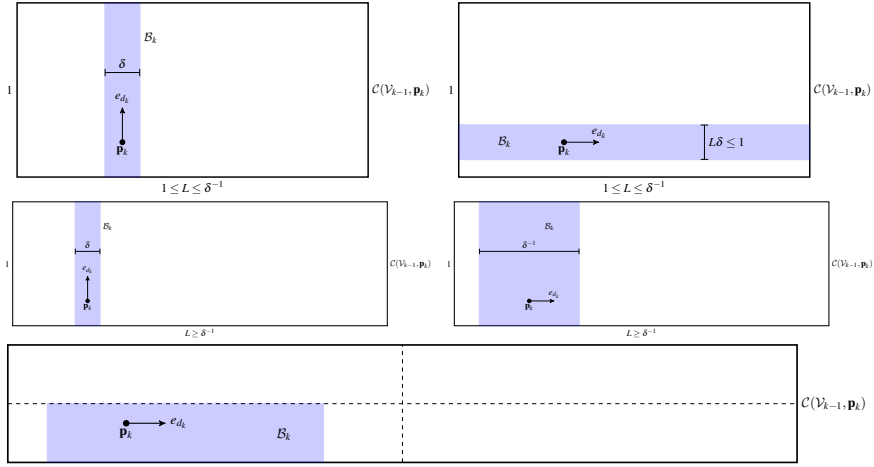

  \centering
  \includegraphics[width=0.45\linewidth,page=13]{figures.pdf}
  \includegraphics[width=0.45\linewidth,page=14]{figures.pdf}
  \includegraphics[width=0.45\linewidth,page=15]{figures.pdf}
  \includegraphics[width=0.45\linewidth,page=16]{figures.pdf}
   \includegraphics[width=0.9\linewidth,page=17]{figures.pdf}
  \caption{Top: If the rectangle $D^j_k$ has aspect ratio $1:L$ with $1\leq L\leq \delta^{-1}$ the
    inserted rectangles are translates of $(0,\delta) \times (0,1)$ and $(0,L)
    \times (0,L\delta)$.
    Center: If the aspect ratio is too long, that is $L>\delta^{-1}$, a
    rectangle $(0,L) \times (0, L\delta)$ is too tall to fit.
    We thus instead insert a translate of $(0,\delta^{-1}) \times (0,1)$.
    Bottom: If the aspect ratio satisfies $\frac{1}{2}\delta^{-1}\leq L \leq 2
    \delta^{-1}$ we only modify the quadrant $Q\subset D^{j}_k$ which contains
    the picked point $\vc p_k^j$. More precisely, we use the constructions on the
    top and center with $D^j_k$ replaced by $Q$. 
  }
    \label{fig:Amod}
\end{figure}

\begin{alg}
  \label{ModelAmod}
  We consider Algorithm \ref{ModelA} but make the following two modifications with respect to its dynamics:
  \begin{enumerate}
  \item for each $D_k^j \in \mathcal{C}(\mathcal{V}_{k-1})$ (which is a
    rectangle of side lengths $\ell_1^j, \ell_2^j$)
    we define $\mathcal{B}_{k}^j$ as follows.
    If $\ell_{d_k^j}^j<\frac{1}{2} \delta^{-1} \ell_{(d_k^j)^\perp}^j$, we keep the previous definition:
    \begin{align}
    \label{eq:replace_0}
     \mathcal{B}_k^j:= \bigl\{\vc x \in D_k^j \colon \vc x\cdot \vc e_{d_k^j}^\perp \in (\vc p_k^j\cdot \vc e_{d_k^j}^\perp -\delta \delta_k^j\ell_{d_k^j}^j,\vc p_k^j\cdot \vc e_{d_k^j}^\perp + \delta (1-\delta_k^j)\ell_{d_k^j}^j)\bigr\},
    \end{align}
    where $\delta_k^j$ is chosen such that $\mathcal{B}_{k}^j$ is contained in
    $D_k^j$:
    \begin{align*}
      \delta_k^j:=\argmin\left\{\left|s-\frac12\right|\colon s\in(0,1)\text{ and both }\vc p_k^j- s \ell_{d_k^j}^j\vc e_{d_k^j}^\perp,\vc p_k^j + (1-s)\ell_{d_k^j}^j \vc e_{d_k^j}^\perp \in D_k^j	\right\}.
    \end{align*}
    See Figure \ref{fig:Amod} (top) for an illustration.
    
    If $\delta\ell_{d_k^j}^j> 2\ell_{(d_k^j)^\perp}^j$, we cannot insert a
    translate of $(0,\ell_{d_k^j}^j) \times (0,\delta\ell_{d_k^j}^j)$, since
    $(0,\delta\ell_{d_k^j}^j) \not \subset (0,\ell_{(d_k^j)^\perp}^j)$.
    We thus instead insert a translate of $(0, \delta^{-1}
    \ell_{(d_k^j)^{\perp}}^j) \times (0,\ell_{(d_k^j)^\perp}^j)$ according to
    the following definition:
    \begin{align}
    \label{eq:replace}
    \begin{split}
      \mathcal{B}_k^j&:= \bigl\{\vc x \in D_k^j \colon \\
      & \quad \vc x \cdot \vc e_{d_k^j}  \in (\vc p_k^j\cdot \vc e_{d_k^j} - \lambda_{k}^j \delta^{-1} \ell_{(d^j_k)^{\perp}} ,\vc p_k^j\cdot \vc e_{d_k^j} + (1-\lambda_{k}^j)\delta^{-1} \ell_{(d^j_k)^{\perp}})  \bigr\},
      \end{split}
    \end{align}
    where
    \begin{align*}
      \lambda_{k}^j&:= \argmin\{|s-\frac{1}{2}|: s \in (0,1)\\
      & \quad \text{ and both } \vc p_k^j- s \delta^{-1}\ell_{(d^j_k)^{\perp}} \vc e_{d^j_k}, \vc p_k^j+ (1-s) \delta^{-1} \ell_{(d^j_k)^{\perp}} \vc e_{d^j_k} \in D_k^j \}.
    \end{align*}
    See Figure \ref{fig:Amod} (center) for an illustration.
  \item Suppose that $D_k^j \in \mathcal{C}(\mathcal{V}_{k-1})$ has lengths
    $\ell_1, \ell_2$ with $\delta^{-1}/2 \leq \frac{\ell_1}{\ell_2} \leq 2
    \delta^{-1}$ and we picked $d_{k}^j=1$.
    Then we divide $D_k^j$ into four
    quadrants and consider only the quadrant $Q$ which contains the point $\vc
    p_k^j$ which had been picked. All other three quadrants remain unchanged and
    are added to the remainder set $\mathcal{V}^{k+1}$. In the picked quadrant $Q$, we define $\mathcal{B}_k^j$ 
\begin{itemize}    
\item    by the formula \eqref{eq:replace_0} (with $D^j_k$ replaced by $Q$) if $\delta^{-1} \geq \frac{\ell_1}{\ell_2}$,
\item and by the formula \eqref{eq:replace} (with $D^j_k$ replaced by $Q$) if $\frac{\ell_1}{\ell_2} \geq  \delta^{-1}$.
\end{itemize}
    See Figure \ref{fig:Amod} (bottom) for an illustration.
  \end{enumerate}
\end{alg}
For simplicity of presentation in the following we further restrict to the case where the horizontal and vertical directions are chosen with equal probabilities $p=1-p=\frac{1}{2}$. 

\begin{rmk}
\label{rmk:2}
The first point (1) in Algorithm \ref{ModelAmod} is a relaxation of Algorithm \ref{ModelA} where we allow for rectangles with large aspect ratio also in the case of alignment with the replacement. The condition (2) is a technical assumption which we do not expect to be necessary. It ensures that we do not cover ``too much'' volume, see Lemma \ref{lem:gain} and Remark \ref{rmk:improve}. 
\end{rmk}

We show that for the modified Algorithm \ref{ModelAmod} in expectation only a (uniformly
bounded) fraction of the total volume is covered by very long rectangles while most of the volume is covered by non-degenerate rectangles.
In order to make this more precise we sort rectangles into \emph{buckets}
according to their aspect ratio.

\begin{defi}
  Let $\mathcal{V}^k$ be a collection of rectangles (defined in Algorithm \ref{ModelAmod}), let $0<\delta<1$ and
  $1<\lambda$ be given and for simplicity of notation assume that
  $\delta=\lambda^{-J+1/2}$ for some positive integer $J \in \N$.
  We then say that a rectangle $R \in \mathcal{V}^k$ is in the \emph{class} $C_{j}, j
  \in \{J, J-1, \dots, 0, -1, \dots \}\subset \Z$, if its aspect ratios $1:L$,
  $L\geq 1$ satisfies
  \begin{align*}
    \lambda^{-j-1/2}\delta^{-1}\leq L < \lambda^{-j +1/2} \delta^{-1}.
  \end{align*}
  We note that $C_l$ with $l>0$ corresponds to aspect ratios $1:L$ with $1\leq L <
  \delta^{-1}\lambda^{-1/2}$ and $C_l$ with $l<0$ corresponds to ``long'' rectangles with aspect
  ratio $L> \delta^{-1}\lambda^{1/2}$.

  Furthermore, we introduce the corresponding volumes
  \begin{align*}
    V_{j}^k= \sum_{R \in \mathcal{V}^k: R \in C_j}|R|.
  \end{align*}
  We note that the the total volume $|\mathcal{V}^k|$ satisfies
  \begin{align*}
    |\mathcal{V}^k|= \sum_{-\infty \leq j \leq J} V_{j}^k=\sum_{R \in \mathcal{V}^k} |R|. 
  \end{align*}
\end{defi}

Our objective in the following is to show that if $\mathcal{V}^{k}$ is the
random variable given by our Algorithm \ref{ModelAmod}, then there exists $J_1<0$ such
that for all $k$ it holds that
\begin{align}
  \label{eq:100}
  \sum_{j\leq J_1} \mathbb{E}(V_{j}^k)\leq 0.1 \ \mathbb{E}(|\mathcal{V}^k|).
\end{align}
We call this a \emph{tail estimate} since it shows that the contribution of
$j\leq J_1$ (which corresponds to long, thin rectangles) to the sum of the sequence $(\mathbb{E}(V_j^k))_{j\leq J}$ is small.

Supposing for the moment that this estimate holds, we deduce that the
expectation of the remaining volume decreases at an exponential rate.

\begin{thm}
  \label{thm:exp}
  Suppose that for some $J_1<0$ the random process generated
  by Algorithm \ref{ModelAmod} satisfies the estimate \eqref{eq:100} for all $k
  \in \N$. Let $\delta \in (0,0.1)$ be the threshold from Algorithms \ref{ModelA} and \ref{ModelAmod}.
  Then there exists $c=c(J_1,\delta) \in (0,1)$ such that for all $k$
  \begin{align*}
    \mathbb{E}(|\mathcal{V}^{k+1}|)\leq c \ \mathbb{E}(|\mathcal{V}^{k}|)
  \end{align*}
  and, as a consequence,
  \begin{align*}
    \mathbb{E}(|\mathcal{V}^{k}|)\leq c^{k} |\Omega|.
  \end{align*}
\end{thm}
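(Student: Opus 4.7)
The plan is to establish a per-step expected contraction of $|\mathcal{V}^k|$ and then iterate. I would split the connected components of $\mathcal{V}^k$ by aspect ratio: the ``tame'' rectangles (those in classes $C_j$ with $j>J_1$, so aspect ratio $L\leq L_0:=\lambda^{-J_1+1/2}\delta^{-1}$) should each lose a uniformly positive expected fraction of their volume in a single step of Algorithm~\ref{ModelAmod}, whereas the ``tail'' rectangles (classes $j\leq J_1$) account, by the hypothesis \eqref{eq:100}, for at most $10\%$ of the total expected volume and can therefore simply be dropped from the lower bound on the covered area.

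\textbf{Step 1: per-rectangle covering rate.} Given a connected component $R$ of $\mathcal{V}^k$ with side-lengths $\ell_1\geq\ell_2$ and aspect ratio $L:=\ell_1/\ell_2\geq 1$, I would first compute the expected fraction $f(L)$ of $|R|$ removed by one step of Algorithm~\ref{ModelAmod}. Since the dimensions of the inserted rectangle are determined by $L$ and the chosen direction $d_k^j$ (the random point only affects its translate, which is then clipped to $R$), $f(L)$ is a function of $L$ and $\delta$ only. A case analysis then gives three regimes: for $L<\tfrac12\delta^{-1}$, formula \eqref{eq:replace_0} applied with both direction choices yields $f(L)=\tfrac{\delta}{2}(L+L^{-1})\geq \delta$; for $L>2\delta^{-1}$, formula \eqref{eq:replace} (for $d_k^j=1$) combined with \eqref{eq:replace_0} (for $d_k^j=2$) gives $f(L)=(\delta^{-1}+\delta)/(2L)$, which on $(2\delta^{-1},L_0]$ is bounded below by $\delta^{-1}/(2L_0)$; and in the transition regime $L\in[\tfrac12\delta^{-1},2\delta^{-1}]$, where quadrant-splitting is active only for $d_k^j=1$, one checks that the coverage fraction inside the selected quadrant is always at least $\tfrac12$, and combining with the untouched case $d_k^j=2$ yields $f(L)\geq \tfrac12\cdot\tfrac14\cdot\tfrac12=\tfrac{1}{16}$. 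Consequently $c_0:=\inf_{L\in[1,L_0]}f(L)>0$, with $c_0=c_0(J_1,\delta)$.

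\textbf{Step 2: assembly and iteration.} Because for each connected component the choices of point and direction are made independently, conditioning on $\mathcal{V}^k$ gives
\begin{align*}
\E\bigl[|\mathcal{V}^{k+1}|\bigm|\mathcal{V}^k\bigr]=|\mathcal{V}^k|-\sum_{R\in\mathcal{C}(\mathcal{V}^k)}f(L_R)|R|\leq |\mathcal{V}^k|-c_0\Bigl(|\mathcal{V}^k|-\sum_{j\leq J_1}V_j^k\Bigr).
\end{align*}
Taking expectations and invoking \eqref{eq:100} then yields $\E[|\mathcal{V}^{k+1}|]\leq (1-0.9\,c_0)\,\E[|\mathcal{V}^k|]$, and setting $c:=1-0.9\,c_0\in(0,1)$ and iterating from $\mathcal{V}^0=\Omega$ proves $\E[|\mathcal{V}^k|]\leq c^k|\Omega|$. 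The main technical obstacle is Step~1 in the transition regime $L\in[\tfrac12\delta^{-1},2\delta^{-1}]$: here the quadrant-selection factor $\tfrac14$ could a priori combine with a degenerate per-quadrant coverage and spoil the uniform lower bound on $f(L)$. The fix is a careful enumeration of the sub-cases ($L$ slightly below vs.\ above $\delta^{-1}$, and the picked direction aligned with the long vs.\ short axis of $R$), from which one reads off that inside the selected quadrant the covered fraction is bounded below by a universal constant independent of $\delta$, which is enough to keep $f(L)\geq\tfrac{1}{16}$ throughout the transition regime.
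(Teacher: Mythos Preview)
Your proposal is correct and follows essentially the same route as the paper's proof: compute a lower bound on the per-rectangle expected coverage fraction as a function of the aspect ratio (your $f(L)$, the paper's bucketed constants $c_j$), take the infimum over the ``tame'' range $j>J_1$, and then use the tail hypothesis \eqref{eq:100} to absorb the remaining $\leq 10\%$ of the expected volume, arriving at $c=1-0.9\min c_j$. Your three-regime case split (short, transition with quadrant modification, long) matches the paper's, and your handling of the transition regime---observing that inside the selected quadrant one always covers at least half its volume, hence $f(L)\geq \tfrac12\cdot\tfrac14\cdot\tfrac12=\tfrac1{16}$---is exactly the content of the paper's ``$\max(\tfrac14,\lambda^{-|j|-1/2})$'' contribution. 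The only cosmetic difference is that you work directly with $f(L)$ rather than passing through the class structure $C_j$, which makes your argument slightly more streamlined but yields the same constant.
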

We thus obtain similar results as in Section \ref{Conv:A} even without
completely covering long rectangles, however possibly with worse rates.

\begin{proof}[Proof of Theorem \ref{thm:exp}]
  We recall that in  Algorithm \ref{ModelAmod} we \emph{independently} insert a building block into each
  rectangle. For any given rectangle $R$ ($=D_k^j \in \mathcal{V}^k$) we may thus compute the
  expected volume fraction (of $R$) covered by the inserted building block and
  subsequently sum over all rectangles.
  We note that, by scaling, this volume fraction only depends on the aspect ratio of $R$ which
  is comparable to $\lambda^{-l} \delta^{-1}$ if $R \in C_l$.

  We claim that there exists a sequence of coefficients $c_j\in (0,1)$ (which is independent of $k$) such that
  \begin{align}
    \label{eq:200}
    \mathbb{E}(|\mathcal{V}^k|)-\mathbb{E}(|\mathcal{V}^{k+1}|) \geq \sum_{J_1<j \leq J} c_j \mathbb{E}(V_j^k).
  \end{align}
  We remark that the quantity on the left equals the total volume covered by
  building blocks when passing from step $k$ to $k+1$ (since only building blocks
  are removed). In the following we will thus have to estimate the expected volume
  fraction covered by building blocks for any given rectangle $R \in \mathcal{V}^k$. Before proving \eqref{eq:200} let us
  discuss how it allows us to conclude our proof.
To this end,  we may further estimate the right-hand-side of \eqref{eq:200} by invoking \eqref{eq:100}
  \begin{align*}
    \min(c_j) \sum_{J_1< j \leq J} \mathbb{E}(V_j^k) \geq 0.9 \min(c_j) \mathbb{E}(|\mathcal{V}^k|).
  \end{align*}
  Inserting this estimate back into \eqref{eq:200} we deduce that
  \begin{align*}
    \mathbb{E}(|\mathcal{V}^{k+1}|) \leq (1-0.9\min(c_j)) \mathbb{E}(|\mathcal{V}^k|),
  \end{align*}
  which yields the result of Theorem \ref{thm:exp}.

  It hence remains to prove the claimed inequality \eqref{eq:200}.
  Let thus $R$ be a given rectangle of lengths $\ell_1, \ell_2$ and for
  simplicity of notation denote
  $L=\frac{\max(\ell_1,\ell_2)}{\min(\ell_1,\ell_2)}\geq 1$.
  Then after rescaling, translation and possibly rotating by $\frac{\pi}{2}$ we
  may assume that
  \begin{align*}
  R=(0,L)\times (0,1).
  \end{align*}
  Let $j \in \{\dots, -1, 0, 1, \dots, J\}$ such that $R \in C_j$ and hence
  \begin{align}
    \label{eq:8}
    \lambda^{-j-1/2} \delta^{-1} \leq L \leq \lambda^{-j+1/2} \delta^{-1}.
  \end{align}
  We then estimate the expected volume covered by the inserted building block as
  follows:

  If the direction $\vc e_{d^j_k}$ picked is vertical, $\vc e_{d^j_k}= \vc e_2$, and $L \not \in
  (\frac{1}{2}\delta^{-1}, 2 \delta^{-1})$ then we
  insert a translate of $(0,\delta) \times (0,1)$ into a rectangle
  $(0,L)\times(0,1)$ (see Figure \ref{fig:Amod} on the left side of the top and
  center rows).
  If instead $L \in (\frac{1}{2}\delta^{-1}, 2 \delta^{-1})$, then by point
  $(2)$ of Algorithm \ref{ModelAmod} we only modify a quadrant and hence
  insert a translate of $(0,\delta/2) \times (0,1/2)$ (see Figure \ref{fig:Amod}
  bottom).
  In both cases we cover at most a volume fraction
  \begin{align}
    \label{eq:9}
    \frac{\delta}{L}\leq \lambda^{j-1/2}\delta^{2}
  \end{align}
  and recall that we picked the direction $e_{d^j_k}=e_2$ with a probability $\frac{1}{2}$.
  
  Suppose the direction picked is horizontal, that is $\vc e_{d^j_k}= \vc e_1$, and let again
  without loss of generality $R=(0,L) \times (0,1) \in C_j$. Then we distinguish
  three cases:
  \begin{itemize}
  \item  If $L\leq \frac{1}{2} \delta^{-1}$ (which implies that $j\geq 0$ in
    \eqref{eq:8}), we insert a translate of $(0,L)\times (0,L \delta)$ and hence
    cover a volume fraction $L\delta \leq \lambda^{-j-1/2}= \lambda^{-|j|-1/2}$. 
  \item If $L\geq 2 \delta^{-1}$ (which implies that $j\leq 0$ in
    \eqref{eq:8}), we insert a translate of $(0,\delta^{-1}) \times(0,1)$ and hence
    cover a volume fraction $\frac{\delta^{-1}}{L}\leq \lambda^{j-1/2}= \lambda^{-|j|-1/2}$.
  \item Finally, if $\frac{1}{2} \delta^{-1} \leq L \leq 2 \delta^{-1}$ (which
    implies that $|j|$ is small in \eqref{eq:8}), we only
    modify $R$ inside a quadrant and hence may bound the volume fraction
    covered from above by $\frac{1}{4}$.
  \end{itemize}
  We thus cover at most a volume fraction $\max(\frac{1}{4},
  \lambda^{-|j|-1/2})$ and recall that we picked $\vc e_{d^j_k}= \vc e_1$ with
  probability $\frac{1}{2}$.
  Combining this estimate and \eqref{eq:9}, we may thus choose 
  \begin{align}
    \label{eq:6}
    c_j=\frac{1}{2} \delta^2 \lambda^{j-1/2} + \frac{1}{2} \max(\frac{1}{4},\lambda^{-|j|-1/2}) \in (0,1).
  \end{align}
  This establishes the claimed inequality \eqref{eq:200} and hence concludes the proof.
\end{proof}

\begin{rmk}
\label{rmk:optimal}
We remark that our computations of $\mathbb{E}(|\mathcal{V}^k|)-
\mathbb{E}(|\mathcal{V}^{k+1}|)$ in \eqref{eq:200} are close to being sharp.
More precisely, given the aspect ratio of a rectangle $R$ we can precisely
compute the expected volume fraction (of $R$) which is covered by the building
block. Since in our buckets $C_j$ we group ratios which differ by at most a factor
$\lambda^{\pm 1}$, we may bound these volume fractions from above and below by
constants $c_j^*$ and $c_j$ which differ from each other by a factor at most
$\lambda^{\pm 1}$ (see Lemma \ref{lem:lowerbound} for a calculation of lower bounds).
\end{rmk}

The remainder of this section is concerned with establishing the claimed
estimate \eqref{eq:100}.
More precisely, we make the stronger claim that there exists a constant $C>1$
(for our purpose the constant can, for instance, be chosen to be $C=100$) such that
for all $k\geq 0$ and all $j\leq J_1$ it holds that
\begin{align}
  \label{eq:300}
  \mathbb{E}(V^k_{j})\leq C \lambda^{j} \mathbb{E}(|\mathcal{V}^k|).
\end{align}
That is, rectangles with a very large aspect ratio comparable to
$\lambda^{|j|}\delta^{-1}$ cover an exponentially decreasing amount of the total
volume $\mathbb{E}(|\mathcal{V}^k|)$.
As $j\leq J_1 <0$ is negative, we may relate this to the geometric series in
$\frac{1}{\lambda}<1$ (starting at $|J_1|$) and after
possibly choosing $J_1$ even more negative it holds that
\begin{align*}
  C \sum_{j\leq J_1} \lambda^{j} = C \frac{\lambda^{J_1}}{1-\lambda^{-1}} \leq 0.1,
\end{align*}
which implies the desired result \eqref{eq:100}.
In order to prove \eqref{eq:300} we proceed by induction using an upper and a
lower bound given by the following two lemmas.

\begin{lem}
  \label{lem:gain}
   Let $\mathcal{V}^k$ be as above.  Suppose that \eqref{eq:300} holds for a given $k$ and $C$ large ($C=100$)
  and $\lambda=1.1$.
  Then for all $j\leq J_1<0$ it holds that
  \begin{align}
    \label{eq:10}
    \mathbb{E}(V^{k+1}_{j})\leq 0.7\ C \lambda^{j} \mathbb{E}(|\mathcal{V}^k|).
  \end{align}
\end{lem}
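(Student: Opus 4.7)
The plan is to decompose $\E(V^{k+1}_j)$ into contributions from parent rectangles of each aspect-ratio class at step $k$. Since Algorithm \ref{ModelAmod} treats each connected component of $\mathcal{V}^k$ independently, and the conditional distribution of one replacement step applied to a single rectangle depends only on its aspect ratio (by scale invariance of the uniform sampling), there exist transition coefficients $P_{j',j}\ge 0$, independent of $k$ and of the other rectangles, such that
\begin{equation*}
  \E(V^{k+1}_j) \;\le\; \sum_{j'\in\Z} P_{j',j}\,\E(V^{k}_{j'}),
\end{equation*}
where $P_{j',j}$ bounds the expected fraction of the volume of a single $R\in C_{j'}$ that ends up in $C_j$ after one iteration. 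I would then split this sum at $j'=J_1$, using the inductive hypothesis \eqref{eq:300} for $j'\le J_1$ and the trivial bound $\E(V^k_{j'})\le \E(|\mathcal{V}^k|)$ for $j'>J_1$.

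The heart of the proof is a case analysis of $P_{j',j}$ on a parent reduced (via rescaling, translation and rotation) to $R=(0,L)\times(0,1)$ with $L\in[\lambda^{-j'-1/2}\delta^{-1},\lambda^{-j'+1/2}\delta^{-1})$. For $j'=j$, the replacement block has $\vc e_1$-extent $\delta^{-1}$ or $\delta$ according to the sampled direction, and the remaining two pieces have widths distributed uniformly via the position of the point; for $\lambda=1.1$ a geometric check shows that at most one of these pieces can satisfy the aspect-ratio constraint of $C_j$, and integrating over the point together with the $O(\lambda^{j})$ ``push-to-boundary'' edge cases yields $P_{j,j}\le (1-\lambda^{-2})+O(\lambda^{j})$. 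For $j'<j$ (parent strictly more elongated), the cut must fall in a sub-interval of length $\sim\lambda^{-j}\delta^{-1}$ of a total length $\sim\lambda^{-j'}\delta^{-1}$, so the probability is $\sim\lambda^{j'-j}$; multiplying by the resulting child area $\sim\lambda^{-j}\delta^{-1}$ and dividing by $|R|\sim\lambda^{-j'}\delta^{-1}$ gives $P_{j',j}\lesssim 2(\lambda-1)\lambda^{2(j'-j)-1/2}$. For $j'>j$ (parent strictly less elongated), the only child that can reach the large aspect ratio $\lambda^{-j}\delta^{-1}$ is a thin sliver of $\vc e_1$-width $\sim\delta\lambda^{j}$, and the same scheme produces the negligible bound $P_{j',j}\lesssim\delta^{3}\lambda^{2j+j'}$.

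Inserting these estimates, the inductive regime $j'\le J_1$ contributes
\begin{equation*}
  C\lambda^{j}\Bigl(P_{j,j}+\sum_{j'<j}P_{j',j}\lambda^{j'-j}\Bigr)\E(|\mathcal{V}^k|),
\end{equation*}
where the second sum is a geometric series of ratio $\lambda^{-3}$ that evaluates to $2(\lambda-1)\lambda^{-1/2}/(\lambda^3-1)\approx 0.58$ at $\lambda=1.1$. The trivial regime $j'>J_1$ only contributes through the thin-sliver terms, whose sum is of order $\delta^{2}\lambda^{j}\E(|\mathcal{V}^k|)$ and thus a lower-order correction. Combining a sharpened bound on $P_{j,j}$ (taking into account the quadrant restriction of rule (2) of Algorithm \ref{ModelAmod} in the intermediate-$L$ regime and averaging the position of the point within $R$) with the above geometric sum brings the total strictly below $0.7\,C\lambda^{j}\E(|\mathcal{V}^k|)$, which is precisely the claim.

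The main technical obstacle is the fine control of $P_{j,j}$: its non-edge contribution is already $1-\lambda^{-2}\approx 0.17$, and combined with the $j'<j$ series one must keep the total under $0.7$. This requires simultaneously tracking the push-to-boundary probabilities (which are of order $\lambda^{j}$ and therefore harmless for $j\le J_1$ sufficiently negative) and the quadrant restriction that ensures parents with aspect ratio comparable to $\delta^{-1}$ lose at most a quarter of their volume in one step; the bookkeeping is elementary but tedious, and the constant $J_1$ must ultimately be chosen negative enough so that the $O(\lambda^{j})$ corrections can be absorbed into the $0.7$ threshold.
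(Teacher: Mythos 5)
Your overall strategy is the same as the paper's: sort parents into the aspect-ratio buckets $C_{j'}$, bound the expected fraction of a parent's volume that lands in $C_j$ after one insertion, apply the inductive hypothesis \eqref{eq:300} to the elongated buckets and the trivial bound $\E(V^k_{j'})\leq \E(|\mathcal{V}^k|)$ to the rest, and sum a geometric series of ratio $\lambda^{-3}$. The gap is in the decisive quantitative step. By your own accounting the dominant part is $P_{j,j}+\sum_{j'<j}P_{j',j}\lambda^{j'-j}\approx (1-\lambda^{-2})+0.58\approx 0.75>0.7$, and the mechanism you invoke to recover the missing margin is not available: the quadrant rule $(2)$ of Algorithm \ref{ModelAmod} is only triggered for parents whose aspect ratio lies in $[\tfrac12\delta^{-1},2\delta^{-1}]$, i.e.\ for buckets near $C_0$, whereas the diagonal term $P_{j,j}$ with $j\leq J_1$ concerns parents of aspect ratio about $\lambda^{|j|}\delta^{-1}$, which for any $J_1$ compatible with \eqref{eq:100} is far larger than $2\delta^{-1}$, so rule $(2)$ never applies to them; and averaging over the position of the point is exactly what already produces $1-\lambda^{-2}$ (it is the worst case over $L$ in the bucket of the integral of the child width over the cut position), so no further sharpening of $P_{j,j}$ can be extracted from that source. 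In the paper the room below $0.7$ comes instead from the off-diagonal coefficients: for $j'<j$ the exact integral over the cut position combined with the lower bound $L\geq \lambda^{-j'-1/2}\delta^{-1}$ gives coefficients of the form $\lambda^{2(j'-j)}(1-\lambda^{-2})$ (see the derivation of \eqref{eq:5}), whose weighted sum is $\frac{1-\lambda^{-2}}{1-\lambda^{-3}}$, estimated there to lie below $0.7$ at $\lambda=1.1$; your cruder ``length of the admissible cut interval times maximal child area'' bound with prefactor $2(\lambda-1)\lambda^{-1/2}$ is lossier, and since the whole margin is only a few percent, this loss is precisely what pushes your total over the threshold. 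As written, the induction therefore does not close, and the proposed repair does not repair it.

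Two further points. First, your bound $P_{j',j}\lesssim \delta^{3}\lambda^{2j+j'}$ for $j'>j$ is false for short parents ($j'\geq 0$, $L\leq \delta^{-1}$): when the horizontal direction is drawn, the inserted block is a translate of $(0,L)\times(0,L\delta)$ and the leftover horizontal slabs $(0,L)\times(0,\alpha)$ can lie in $C_j$ while covering a fraction up to about $\lambda^{j+1/2}$ of the parent, with no $\delta$-suppression; this is exactly the term $0.5\,\lambda^{j+1/2}\,\E(V^k_{\geq 0})$ in the paper's \eqref{eq:5} and the reason the hypothesis requires $C$ large. The corrected term is still harmless, but only because it is divided by $C=100$ after comparison with the target $0.7\,C\lambda^{j}$ (the factors $\lambda^{j}$ cancel), not because $J_1$ is very negative, and certainly not because it is of order $\delta^{2}\lambda^{j}$. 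Second, beware of trying to rescue the diagonal and sub-diagonal terms through the direction probability: for long parents both direction choices remove a full-height strip (of width $\delta^{-1}$ or $\delta$) and hence produce long horizontal children with essentially the same cut-position statistics, so no factor $\tfrac12$ is gained there either; the sharpening genuinely has to come from the exact integral computation for the $j'\leq j$ coefficients, as in the paper's proof.
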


\begin{lem}
  \label{lem:lowerbound}
  Let $\mathcal{V}^k$ be as above. 
  Then it holds that
  \begin{align}
    \label{eq:400}
    \mathbb{E}(|\mathcal{V}^{k+1}|)\geq 0.7 \ \mathbb{E}(|\mathcal{V}^k|).
  \end{align}
\end{lem}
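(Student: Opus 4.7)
The plan is to exploit the fact that in one step of Algorithm \ref{ModelAmod} the building blocks are inserted \emph{independently} in the distinct rectangles $D_k^j \in \mathcal{C}(\mathcal{V}^k)$, conditionally on $\mathcal{V}^k$. Writing
$$\mathbb{E}(|\mathcal{V}^k|-|\mathcal{V}^{k+1}|\mid \mathcal{V}^k) = \sum_j \mathbb{E}(|\mathcal{B}_k^j|\mid D_k^j),$$
the lemma will follow by taking expectation provided one establishes the per-rectangle upper bound
$$\mathbb{E}(|\mathcal{B}_k^j|\mid D_k^j) \leq 0.3\, |D_k^j|.$$
This per-rectangle bound is mirror-symmetric to the lower bound in the proof of Theorem \ref{thm:exp}, which is what Remark \ref{rmk:optimal} alludes to.

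A scaling plus rotation argument reduces matters to computing a function of the aspect ratio $L := \max(\ell_1,\ell_2)/\min(\ell_1,\ell_2) \geq 1$ of $D_k^j$ alone. Moreover, conditional on the random direction $d_k^j$, the value $|\mathcal{B}_k^j|$ is \emph{deterministic}: the shift parameters $\delta_k^j$ and $\lambda_k^j$ in \eqref{eq:replace_0} and \eqref{eq:replace} only translate the strip within $D_k^j$ and never change its area. I would then split the analysis into the three regimes of Algorithm \ref{ModelAmod}. In the regime $L \leq \tfrac12\delta^{-1}$, formula \eqref{eq:replace_0} yields a covered fraction of $\delta L$ if $d_k^j$ is the long direction and $\delta/L$ if it is the short one; averaging with probabilities $\tfrac12,\tfrac12$ gives $\tfrac12(\delta L + \delta/L)$, which is maximized on $[1,\tfrac12\delta^{-1}]$ at $L = \tfrac12\delta^{-1}$ with value $\tfrac14 + \delta^2$. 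In the regime $L \geq 2\delta^{-1}$, \eqref{eq:replace} yields $\delta^{-1}/L$ for the long direction and \eqref{eq:replace_0} yields $\delta/L$ for the short direction; the average $\tfrac{\delta^{-1}+\delta}{2L}$ is maximized at $L=2\delta^{-1}$, giving $\tfrac14 + \tfrac{\delta^2}{4}$. In the intermediate regime $\tfrac12\delta^{-1} \leq L \leq 2\delta^{-1}$ only a single quadrant $Q$ is modified, so $|\mathcal{B}_k^j| \leq |Q| = |D_k^j|/4$.

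Collecting these three bounds, for $\delta \in (0,0.1)$ (as in Theorem \ref{thm:exp}) each regime gives the uniform upper bound $\mathbb{E}(|\mathcal{B}_k^j|\mid D_k^j) \leq (\tfrac14 + \delta^2)|D_k^j| \leq 0.3\,|D_k^j|$. Summing over $j$, using $\sum_j |D_k^j| = |\mathcal{V}^k|$, and taking expectations yields
$$\mathbb{E}(|\mathcal{V}^k|) - \mathbb{E}(|\mathcal{V}^{k+1}|) \leq 0.3\,\mathbb{E}(|\mathcal{V}^k|),$$
which is the stated conclusion.

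The main subtlety, and the main thing to verify carefully, is that in each regime the strip defined by \eqref{eq:replace_0} or \eqref{eq:replace} is entirely contained in $D_k^j$ (respectively in the quadrant $Q$) so that the area formulas used above are exact rather than subject to a boundary truncation. A direct check against the relevant threshold conditions on $L$ shows that the width $\delta\ell_{d_k^j}^j$ of the strip in \eqref{eq:replace_0} is at most $\ell_{(d_k^j)^\perp}^j$ exactly when \eqref{eq:replace_0} is invoked, and similarly that $\delta^{-1}\ell_{(d_k^j)^\perp}^j \leq \ell_{d_k^j}^j$ precisely when \eqref{eq:replace} is invoked, in the full and quadrant cases alike; these are precisely the conditions built into Algorithm \ref{ModelAmod} to select between the two formulas.
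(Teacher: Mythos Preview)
Your proposal is correct and follows essentially the same approach as the paper: reduce to a per-rectangle computation of the expected covered fraction, normalize to aspect ratio $L\geq 1$, split into the three regimes of Algorithm \ref{ModelAmod}, and bound the expected covered fraction by a constant below $0.3$. The paper is slightly coarser, bounding the short-direction fraction uniformly by $\delta$ and the long-direction fraction uniformly by $\tfrac12$ to get $c_j\leq \tfrac12\delta+\tfrac14$, whereas you optimize over $L$ in each regime to obtain the marginally sharper $\tfrac14+\delta^2$; both give the claim for $\delta\leq 0.1$.

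One small imprecision worth noting: in the intermediate regime the quadrant modification (2) of Algorithm \ref{ModelAmod} is invoked only when the \emph{long} direction is picked, not unconditionally as your phrasing suggests. When the short direction is picked, formula \eqref{eq:replace_0} still applies on the full rectangle and the covered fraction is $\delta/L\leq 2\delta^2$, which is well below $\tfrac14$. So your deterministic bound $|\mathcal{B}_k^j|\leq |D_k^j|/4$ in that regime remains valid, just for a slightly different reason in the short-direction sub-case.
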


\begin{rmk}
\label{rmk:comment_alg}
We remark that a failure of the lower bound \eqref{eq:400} corresponds to covering
a large volume fraction in a single iteration step of the algorithm, which at first sight seems very desirable. However,
by covering this large volume fraction we might possibly loose control of relative volume
fractions (e.g. it might be that the tail is not anymore relatively small).
We believe that \eqref{eq:400} remains true also for Algorithm \ref{ModelA}, but
our current method of proof for that case only allows to derive a suboptimal
lower bound by $0.6 \ \mathbb{E}(|\mathcal{V}^k|)$, which is not sufficient to close the
argument.
For simplicity of presentation we hence opted to modify the algorithm to cover a
lower fraction in the ``best case'' (leading to the condition (2) in Algorithm \ref{ModelAmod}).
We comment on some partial results for the unmodified case at the end of this section in Remark \ref{rmk:improve}.

We emphasize that in contrast to Lemma \ref{lem:lowerbound} the result of Lemma \ref{lem:gain} is valid for both Algorithms \ref{ModelA} and
\ref{ModelAmod} and, in particular, does not need the modifications from Algorithm \ref{ModelAmod}. 
\end{rmk}
The combination of Lemmas \ref{lem:gain} and \ref{lem:lowerbound} allows us to prove \eqref{eq:300}.

\begin{proof}[Proof of the claim \eqref{eq:300} using Lemmas \ref{lem:gain} and \ref{lem:lowerbound}]
We note that initially, that is for $k=0$, $V^k_j=0$ for all $j\leq J_1$ and
thus \eqref{eq:300} is trivially satisfied. We then aim to proceed by induction.
Suppose that \eqref{eq:300} holds for a given $k$ and with $\lambda=1.1$.
  Then by Lemma \ref{lem:gain} and Lemma \ref{lem:lowerbound} it holds that
  \begin{align*}
    \mathbb{E}(V^{k+1}_{j})\stackrel{\eqref{eq:10}}{\leq} C 0.7 \lambda^{j} \mathbb{E}(|\mathcal{V}^k|) \stackrel{\eqref{eq:400}}{\leq} C \lambda^{j} \frac{0.7}{0.7} \mathbb{E}(|\mathcal{V}^{k+1}|) = C \lambda^j \mathbb{E}(|\mathcal{V}^{k+1}|),
  \end{align*}
  and the estimate \eqref{eq:300} therefore also holds for $k+1$.
  We thus conclude by induction.
\end{proof}

It remains to prove Lemma \ref{lem:gain} and Lemma \ref{lem:lowerbound}.

\begin{proof}[Proof of Lemma \ref{lem:gain}]
  We argue similarly as in equation \eqref{eq:200} in the proof of Theorem
  \ref{thm:exp} and estimate $\mathbb{E}(V^{k+1}_j)$ in terms of
  $\mathbb{E}(V^{k}_{l})$, $l \in \Z \cap \{m \leq J\}$.
  Here we use that every rectangle in $\mathcal{V}^{k+1}$ is obtained as one of the
  connected components of a rectangle $R \in \mathcal{V}^{k}$ generated by
  inserting a building block (see Figure \ref{fig:Amod}).
  More precisely, let $j \leq J_1$ be arbitrary but fixed and let $R \in
  \mathcal{V}^k$ be a given rectangle. As in the proof of Theorem \ref{thm:exp}
  after rescaling and rotation we may assume that
  \begin{align*}
    R = (0,L) \times (0,1), L\geq 1.
  \end{align*}
  Then given the random point $\vc p \in R$ and direction $\vc e_d \in \{\vc e_1, \vc e_2\}$ we
  insert a building block $\mathcal{B}=\mathcal{B}(\vc p, \vc e_d)\subset R$, which
  divides
  \begin{align*}
   R \setminus \mathcal{B} =: R_1 \cup R_2, 
  \end{align*}
  into two connected components $R_1= R_1(\vc p, \vc e_d)$
  and $R_2= R_2(\vc p, \vc e_d)$ (if $\mathcal{B}$ touches the boundary of $R$ some of these
  components might be trivial).  
  We then compute the contribution of $R \in \mathcal{V}^k$ to $\mathbb{E}(V_j^{k+1})$ by determining for which $\vc p$ and $\vc e_d$
  it holds that $R_1 \in C_j$ or $R_2 \in C_j$ (and integrating $|R_1|$ and
  $|R_2|$ with respect to the probability density) and finally sum over all $R$.

  More precisely, we claim that for any $j\leq J_1$ it holds that
  \begin{align}
    \label{eq:5}
    \begin{split}
    \mathbb{E}(V_j^{k+1})&\leq (0.5\lambda^{j+1/2} \delta + 0.5 \lambda^{j+1/2}) \mathbb{E}(V_{\geq 0}^k) \\
                         & \quad + \delta^{2} \lambda^{j+1/2} \mathbb{E}(V_{0\geq l > j}^k) \\
                         & \quad + (1-\lambda^{-2}) \mathbb{E}(V_{j}^k) \\
                         &\quad +  \sum_{l<j} \lambda^{2l-2j} (1-\lambda^{-2}) \mathbb{E}(V_l^k).
  \end{split}
  \end{align}
  Here we used the short-hand notation
  \begin{align*}
  \mathbb{E}(V_{\geq 0}^k)  := \sum\limits_{l \geq 0} \mathbb{E}(V_{l}^k), \ 
  \mathbb{E}(V_{0\geq l > j}^k):=\sum\limits_{0\geq l >j} \mathbb{E}(V_{l}^k).
  \end{align*}
  Using \eqref{eq:300} and the fact that $C$ is large, we will argue that the main
  contribution on the right-hand-side of \eqref{eq:5} is given by the last two terms.
  More precisely, inserting the estimate \eqref{eq:300}, the last two contributions
  are controlled by
  \begin{align*}
    & \quad (1-\lambda^{-2})\sum_{l \geq j} C \lambda^{2j-3l}\mathbb{E}(|\mathcal{V}^k|) 
    = C \lambda^{-j} \frac{1-\lambda^{-2}}{1-\lambda^{-3}} \mathbb{E}(|\mathcal{V}^k|).
  \end{align*}
  In particular, we observe that
  \begin{align*}
    \beta(\lambda):= \frac{1-\lambda^{-2}}{1-\lambda^{-3}}= \frac{\lambda^{-1}+1}{\lambda^{-2}+\lambda^{-1}+1}
  \end{align*}
  approaches $\frac{2}{3}$ as $\lambda$ approaches $1$.
  Inserting these estimates into \eqref{eq:5} and choosing $\lambda=1.1$, we may thus deduce that
  \begin{align*}
   \mathbb{E}(V_j^{k+1}) \leq \lambda^{j} (\delta \lambda^{\frac{1}{2}}+\delta^2 \lambda^{\frac{1}{2}}+0.5 \lambda^{\frac{1}{2}}+ 0.68C) \mathbb{E}(|\mathcal{V}^k|) < 0.7 C \lambda^j \ \mathbb{E}(|\mathcal{V}^k|),
  \end{align*}
  provided $C$ is sufficiently large compared to $0.5$.

  It thus remains to prove the estimate \eqref{eq:5}.
  Let thus $j \leq J_1$ be arbitrary but fixed. As discussed above, for
  any $R \in \mathcal{V}^k$ we determine with which probability $R_1$ and $R_2$
  are in $C_j$ by estimating the probability of the associated sets of $(\vc p, \vc e_d)$. Using this, we compute the expectations of $|R_1| 1_{R_{1} \in C_j} + |R_2|
  1_{R_2 \in C_j}$ and then compare these to the volume $|R|$ of $R$.
  
  We remark that if the aspect ratio $1:L$, $L\geq 1$ of $R$ satisfies
  $\frac{1}{2}\delta^{-1}\leq L \leq 2 \delta^{-1}$ and we are thus in case
  $(2)$ of Algorithm \ref{ModelAmod}, then the rectangles $R_1, R_2$ are further
  rescaled by a factor $\frac{1}{2}$ and hence cover $\frac{1}{4}$ of the volume
  which they would else have occupied without this modification.
  Since we only require upper bounds on $|R_1| 1_{R_{1} \in C_j} + |R_2|
  1_{R_2\in C_j}$, this gain of a factor $\frac{1}{4}$ only improves the estimates. Thus, for
  simplicity of notation in the following we establish the stronger
  estimate for the algorithm without this second modification.  
  \medskip 

  In the following let always $R \in \mathcal{V}^k$ and without loss of
  generality, after rescaling, rotating and translating let 
  \begin{align*}
    R=(0,L) \times (0,1)
  \end{align*}
  with $L\geq 1$.
  
  \underline{The contribution by $V_{\geq 0}^k$:} Suppose that $R$ is such that
  $1\leq L\leq \delta^{-1}$ (and hence $R \in C_l$ for some $l \geq 0$).
  We then want to estimate the volume of the generated rectangles $R_1, R_2$ if
  they are in $C_j$.
  Here we say that $R_1$ (or $R_2$) is \emph{vertical} if it is a translate of
  $(0,a)\times (0,1)$ for some $a \in (0,1)$ (that is the $\vc e_2$ direction is the
  longest) and otherwise call it \emph{horizontal}.

  Let us first consider the case when $R_1$ (or $R_2$) is in $C_j$ and vertical.
  Then $R_1$ is a translate of $(0,\lambda^{j} \delta \gamma)\times (0,1)$ (see
  Figure \ref{fig:Amod} left) with $\gamma \in (\lambda^{-1/2},
  \lambda^{+1/2})$ (since the class $C_j$ was defined in this way).
  We may thus roughly bound its volume
  fraction by
  \begin{align*}
    \frac{\lambda^{j+1/2} \delta}{L} \leq \lambda^{j+1/2} \delta.
  \end{align*}

  Next suppose that $R_1$ (or $R_2$) is in $C_j$ and horizontal.
  Then (by the definition of $V^k_{\geq 0}$ and $C_j$ as well as the replacements explained in Algorithm \ref{ModelAmod}) $R_1$ is a translate of $(0,L)\times (0, \alpha)$ (see Figure
  \ref{fig:Amod} top right) with $\frac{\alpha}{L} \in (\lambda^{j-1/2}\delta^{-1},
  \lambda^{j+1/2}\delta^{-1})$ and thus in particular covers a volume fraction
  less than
  \begin{align*}
   \alpha \leq \lambda^{j+1/2}. 
  \end{align*}

  \underline{The contribution by $V_{l}^k$ with $0\geq l>j$:}
  Let again $R \in \mathcal{V}^k$ and suppose that $\delta^{-1}\leq L < \delta^{-1}\lambda^{-j-1/2}$ (and thus
  $R \in C_{l}$, $0\geq l> j$).
  Since $L\geq \delta^{-1}$, the generated rectangles  $R_1, R_2$ have height $1$
  and are thus translates of $(0,\alpha) \times (0,1)$ for some $\alpha \in
  (0,L)$ (see Figure \ref{fig:Amod} center).
  As $\alpha<L < \delta^{-1}\lambda^{-j-1/2}$, it is not possible for $R_1$ or
  $R_2$ to be horizontal rectangles in $C_j$ (since $\alpha$ is too small for the rectangles to be in $C_j$).
  If $R_1$ (or $R_2$) is a vertical rectangle and in $C_j$, it is a translate of
  $(0,\lambda^{j} \delta \gamma)\times (0,1)$ with $\gamma \in (\lambda^{-1/2},
  \lambda^{1/2})$ and hence covers a volume fraction at most
  \begin{align*}
    \frac{\lambda^{j}\delta \gamma}{L} \leq \lambda^{j+1/2} \delta^2.
  \end{align*}

  \underline{The contribution by $V_{l}^{k}$ with $l\leq j$:}
  Finally, let again $R \in \mathcal{V}^k$ and suppose that $R \in C_l$ and thus
  \begin{align*}
    \lambda^{-l-1/2}\delta^{-1}\leq L <  \lambda^{-l+1/2}\delta^{-1}.
  \end{align*}
  If the generated rectangle $R_1$ (or $R_2$) is vertical, it will only cover a
  volume fraction
  \begin{align}
    \frac{\delta \lambda^{j} \gamma}{L}\leq \delta^2 \lambda^{1+j+l},
  \end{align}
  which is negibible.
  
  In the following we thus focus on estimating the expected volume fraction covered by $R_1$ in
  $C_j$ being a horizontal rectangle (which by symmetry is the same volume
  fraction as covered by $R_2$).
  Here for concreteness we again fix
  \begin{align*}
    R=(0,L) \times (0,1)
  \end{align*}
  and let $R_1=R_1(\vc p, \vc e_d)$ be the rectangle generated on the left of the building block $\mathcal{B}$ which had been removed from $R$ (see Figure
  \ref{fig:Amod}).

  \underline{The case $l=j$:}
  By construction the building block $\mathcal{B}$ inserted depends on $\vc p$ only
  in terms of its $\vc e_1$ component $ p_1$.
  We thus ask for which $p_1$ (for given $\vc e_d$) it holds that $R_1 \in C_j$ and
  require an estimate of $|R_1(p_1, \vc e_d)|$ in that case.
  Since $p_1$ was chosen according to the Lebesgue measure, we then can compute
  the volume fraction for a given $\vc e_{d'}$ by
  \begin{align*}
  \mathbb{E}\left( \frac{|R_1|}{|R|}: \ R_1 \in C_j \mbox{ is horizontal}, \ d= d' \right) &= \frac{1}{|R|} \int_0^L |R_1(p_1, \vc e_{d'})| 1_{R_1 \in C_j} \frac{d p_1}{L}\\
   &= \frac{1}{L^2} \int_0^L  |R_1(p_1,\vc e_{d'})| 1_{R_1 \in C_j} dp_1.
  \end{align*}
  We first discuss the case when $p_1$ is not close to $0$ or $L$ and the
  building block $\mathcal{B}$ is thus centered at $\vc p$ (see the definitions of
  $\delta_k^j$ and $\lambda_k^j$ in Algorithm \ref{ModelAmod}).
  Then if $\vc e_d= \vc e_1$, $\mathcal{B}=(p_1-\delta^{-1}/2, p_1+\delta^{-1}/2)\times (0,1)$ and
  $R_1=(0,p_1-\delta^{-1}/2)\times(0,1)$.
  Similarly, if $\vc e_d= \vc e_2$, $\mathcal{B}=(p_1-\delta/2, p_1+\delta/2)\times (0,1)$ and
  $R_1=(0,p_1-\delta/2)\times(0,1)$.
  Thus, for $R_1$ to be in $C_j$, we need that either $p_1-\delta^{-1}/2 \in
  (\lambda^{-j-1/2}\delta^{-1}, \lambda^{-j+1/2} \delta^{-1})$ or $p_1-\delta/2 \in
  (\lambda^{-j-1/2}\delta^{-1}, \lambda^{-j+1/2} \delta^{-1})$, respectively.
  
  We remark that if $\mathcal{B}$ is not centered in $p_1$, it touches the right-boundary. Hence, the generated
  rectangle $R_1$ will only be shorter than it would be otherwise and we may
  hence bound from above by the previously derived formula.

  Introducing the new variables of integration $x_1=p_1-\delta^{-1}/2\leq L$ or
  $x_1=p_1-\delta/2 \leq L$, we may thus bound
  \begin{align}
  \label{eq:probab_jl}
  \begin{split}
    \frac{1}{L^2} \int_0^L  |R_1(p_1,e_d)| 1_{R_1 \in C_j} dp_1 &\leq \frac{1}{L^2} \int_{\lambda^{-j-1/2}\delta^{-1}}^L x_1 dx_1 \\
    &= \frac{x_1^2}{2L^2}|_{\lambda^{-j-1/2}\delta^{-1}}^L.
    \end{split}
  \end{align}
  We recall that by symmetry the volume fraction due to $R_2$ on the
  right-hand-side is of the same size and we can hence estimate the full volume
  fraction by:
  \begin{align*}
 & \mathbb{E}\left( \frac{|R_1|}{|R|}: \ R_1 \in C_j \mbox{ is horizontal} \right)+ \mathbb{E}\left( \frac{|R_2|}{|R|}: \ R_2 \in C_j \mbox{ is horizontal} \right)\\
  &\leq
    \frac{x_1^2}{L^2}|_{\lambda^{-j-1/2}\delta^{-1}}^L= 1- (\frac{\lambda^{-j-1/2}\delta^{-1}}{L})^2.
  \end{align*}
  We recall that $L \in (\lambda^{-j-1/2}\delta^{-1}, \lambda^{-j+1/2}
  \delta^{-1})$ and thus this is zero if $L$ is on the smaller end of the range and
  bounded above by
  \begin{align*}
    1-\lambda^{-2}.
  \end{align*}

  \underline{The case $l<j$:}
  We argue analogously as in the case $l=j$ except that the upper limit of the
  interval of integration in the analogue of \eqref{eq:probab_jl} is given by $\lambda^{-j+1/2}\delta^{-1} < L$ instead: Thus, in this case,
  \begin{align*}
   & \mathbb{E}\left( \frac{|R_1|}{|R|}: \ R_1 \in C_j \mbox{ is horizontal} \right)+ \mathbb{E}\left( \frac{|R_2|}{|R|}: \ R_2 \in C_j \mbox{ is horizontal} \right)\\
  &\leq
    \frac{x_1^2}{L^2}|_{\lambda^{-j-1/2}\delta^{-1}}^{\lambda^{-j+1/2}\delta^{-1}} = \frac{\lambda^{-2j+1}\delta^{-2} - \lambda^{-2j-1}\delta^{-2}}{L^2}.
  \end{align*}
  Since $L^2\geq \lambda^{-2l-1}\delta^{-2}$ and $l\leq j-1$ this can be
  estimated by
  \begin{align*}
    \frac{\lambda^{-2j+1} - \lambda^{-2j-1}}{\lambda^{-2l-1}}\leq \lambda^{-2j+2l} (1-\lambda^{-2}).
  \end{align*}
  This concludes the proof of the claim \eqref{eq:5} and thus of the lemma.
\end{proof}

It remains to prove Lemma \ref{lem:lowerbound}.
Due to the modification $(2)$ in the definition of Algorithm \ref{ModelAmod} we
here obtain a very short, straightforward proof.
Subsequently we discuss how to obtain similar results for Algorithm \ref{ModelA}
using more sophisticated methods.

\begin{proof}[Proof of Lemma \ref{lem:lowerbound}]
  We argue similarly as in the derivation of equation \eqref{eq:200} and claim
  that there exist constants such that we obtain the following \emph{upper} bound on
  the volume covered:
  \begin{align}
    \label{eq:21}
    \mathbb{E}(|\mathcal{V}^k|)-\mathbb{E}(|\mathcal{V}^{k+1}|) \leq \sum_{j} c_j \mathbb{E}(V^k_{j}),
  \end{align}
  for constants $c_j >0$ which are independent of $k$.
  Since $\sum_j \mathbb{E}(V_j^k)=\mathbb{E}(|\mathcal{V}^k|)$, a (possibly highly suboptimal) upper bound of the  right-hand-side is given by
  \begin{align*}
    \max(c_j) \mathbb{E}(|\mathcal{V}^k|).
  \end{align*}
  It hence follows that
  \begin{align*}
    \mathbb{E}(|\mathcal{V}^{k+1}|) \geq (1-\max(c_j)) \mathbb{E}(|\mathcal{V}^k|).
  \end{align*}
  We now claim that due to the second modification in the definition of
  Algorithm \ref{ModelAmod} it holds that $\max(c_j)\leq 0.3$ and the result
  hence follows.

  In order to compute the constants $c_j$, we again individually consider each
  rectangle $R \in \mathcal{V}^k$ and after rescaling and possibly rotating by
  $\frac{\pi}{2}$ may assume that
  \begin{align*}
    R= (0,L) \times (0,1)
  \end{align*}
  with $L \geq 1$.
  If we pick the vertical direction, $\vc e_d= \vc e_2$, (which occurs with probability $\frac{1}{2}$),
  we insert a translate of $(0,\delta)\times(0,1)$ and hence cover a very small
  fraction
  \begin{align*}
  \frac{\delta}{L}\leq \delta.
  \end{align*}

  If we instead pick the horizontal direction, $\vc e_d= \vc e_1$, (which also occurs with probability
  $\frac{1}{2}$), we expect to cover a larger volume fraction of $R$.
  We distinguish three cases:
  \begin{itemize}
  \item 
  If $L<\delta^{-1}/2$, the inserted rectangle is a translate of $(0,L)\times
  (0,L \delta)$ and hence covers a volume fraction
  \[L\delta \leq \frac{1}{2}.\]
  See Figure \ref{fig:Amod} top right.
\item 
  Similarly, if $L\geq 2 \delta^{-1}$, we insert a translate of 
  $(0,\delta^{-1})\times(0,1)$ and thus cover a volume fraction
  \[\frac{\delta^{-1}}{L}\leq \frac{1}{2}.\]
  See Figure \ref{fig:Amod} center right.
\item 
  Finally, if $\frac{\delta^{-1}}{2}\leq L < 2 \delta^{-1}$, we are in the case
  $(2)$ of Algorithm \ref{ModelAmod} (see Figure \ref{fig:Amod} bottom).
  As we only modify $R$ inside one quadrant we cover at most
  \[\frac{1}{4}\]
  of the volume.
\end{itemize}
Thus, in all theses cases for $\vc e_d= \vc e_1$ we cover at most $\frac{1}{2}$ of the
volume.

Combining the estimates for $\vc e_d= \vc e_2$ and $\vc e_d= \vc e_1$ (each with probability
$1/2$), then yields the bound
  \begin{align*}
  c_j \leq \frac{1}{2} \delta + \frac{1}{2} \frac{1}{2} = \frac{1}{2} \delta + \frac{1}{4} \leq 0.3,
  \end{align*}
  provided $\delta\leq 0.1$.
\end{proof}

\begin{rmk}
\label{rmk:improve}
Finally, let us briefly comment on some additional challenges in carrying out
the tail estimates \eqref{eq:300} \emph{without} the quadrant modification $(2)$ in Algorithm
\ref{ModelAmod}.
Consider a rectangle  $R \in \mathcal{V}^k$, $R=(0,L)\times(0,1)$ with
$\frac{1}{2}\delta^{-1} \leq L \leq 2 \delta^{-1}$ and suppose we picked
$\vc e_d= \vc e_1$.
Then, if $\frac{1}{2}\delta^{-1}\leq L\leq \delta^{-1}$, we insert a translate of $(0,L)\times(0,L \delta)$
and cover a fraction $L\delta \in [\frac{1}{2},1]$.
Similarly, if $\delta^{-1}\leq L\leq 2 \delta^{-1}$, we insert a translate of
$(0,\delta^{-1})\times (0,1)$ and cover a fraction $\frac{\delta^{-1}}{L} \in
[\frac{1}{2},1]$.
Therefore, the best naive upper bound for $\max(c_j)$ as in the proof of Lemma
\ref{lem:gain} we can achieve is given by
\begin{align*}
  \frac{1}{2} \delta + \frac{1}{2} 1, 
\end{align*}
and hence
\begin{align*}
  \mathbb{E}(|\mathcal{V}^{k+1}|)\geq (1-\frac{1}{2} \delta - \frac{1}{2} 1) \mathbb{E}(|\mathcal{V}^{k}|)= (0.5-\frac{1}{2} \delta)  \mathbb{E}(|\mathcal{V}^{k}|).
\end{align*}
Unlike the factor $0.7$ obtained in Lemma \ref{lem:lowerbound} this estimate is
not sufficient to close the inductive argument for \eqref{eq:300}.

In order to improve this bound, we thus need to exploit that the estimate in
terms of $\max(c_j)$ is very rough and not actually attained.
Indeed, we may employ an approach similar to the one of Lemma
\ref{lem:gain} to show that
\begin{align*}
  \mathbb{E}(V^{k}_0) \leq \theta \ \mathbb{E}(|\mathcal{V}^{k}|),
\end{align*}
for an explicit constant $\theta \in (0,1)$.
That is, only some part of the total volume is covered by rectangles $R \in
C_0$.
Then instead of bounding by $\max(c_j)$ we may use
\begin{align*}
  \theta (\frac{1}{2} \delta + \frac{1}{2} 1) + (1-\theta) \max_{j\neq 0} c_j.
\end{align*}
Unfortunately, while these and further improvements allow us to deduce that
$\mathbb{E}(|\mathcal{V}^{k+1}|)\geq 0.6\ \mathbb{E}(|\mathcal{V}^{k}|)$, this
still is not sufficient to close the inductive estimate \eqref{eq:300}. We thus
opted to simplify discussions by considering the modified Algorithm \ref{ModelAmod}.
\end{rmk}

\section{Simulations}
\label{sec:sim}

In this final section, we discuss the numerical implementation of our models from Algorithms \ref{ModelA} and \ref{ModelB} and compare it to the results in \cite{BCH15} and \cite{TIVP17} on the one hand and to our simulations from \cite{RTZ19} on the other hand. 

\subsection{Strategy}
\label{sec:sim_strat}

In order to run the simulations, we perform the two following simplifications to our models, which significantly reduce the computational cost of the algorithms.\\

\paragraph{\emph{Change 1:}} For Model A we change the definition of the sets $\mathcal{B}_k^j$ whenever $\delta\ell_{d_k^j}^j\geq \ell_{(d_k^j)^\perp}^j$ (this is the degenerate case). In this case we define $\alpha_k^j:= \frac{\ell_{(d_k^j)^\perp}^j}{ \ell_{d_k^j}^j}$, $N_k^j = \left\lfloor\frac{\delta}{\alpha_k^j}\right\rfloor$ and 
$$
\mathcal{B}_k^j:= \bigl\{\vc x \in D_k^j \colon \vc x\cdot \vc e_{d_k^j} \in (\vc p_k^j\cdot \vc e_{d_k^j} - \delta_k^j N_k^j\ell_{(d_k^j)^\perp}^j,\vc p_k^j\cdot \vc e_{d_k^j} + (1-\delta_k^j)N_k^j\ell_{(d_k^j)^\perp}^j)\bigr\},
$$
where 
\begin{align*}
\delta_k^j&:=\argmin\left\{\left|s-\frac12\right|\colon s\in(0,1) \right.\\
& \left. \qquad  \qquad \text{ and both }\vc p_k^j- s N_k^j\ell_{(d_k^j)^\perp}^j\vc e_{d_k^j},\vc p_k^j + (1-s)N_k^j\ell_{(d_k^j)^\perp}^j\vc e_{d_k^j} \in D_k^j	\right\}.
\end{align*}
See Figure \ref{fig:Ncopies} for an illustration.
Similarly, for Model B we set $\alpha_k:= \frac{\ell_{(d_k)^\perp}}{ \ell_{d_k}}$, $N_k = \left\lfloor\frac{\delta}{\alpha_k}\right\rfloor$ and 
$$
\mathcal{B}_k:= \bigl\{\vc x \in \mathcal{C}(\mathcal{V}_{k-1},\vc p_k) \colon \vc x\cdot \vc e_{d_k} \in (\vc p_k\cdot \vc e_{d_k} - \delta_k N_k\ell_{(d_k)^\perp},\vc p_k\cdot \vc e_{d_k} + (1-\delta_k)N_k\ell_{(d_k )^\perp})\bigr\},
$$
whenever $\delta\ell_{d_k}\geq \ell_{(d_k)^\perp}$. Again, here
\begin{align*}
\delta_k&:=\argmin\left\{\left|s-\frac12\right|\colon s\in(0,1) \right. \\
& \left. \qquad \qquad \text{ and both }\vc p_k- s N_k\ell_{(d_k)^\perp}\vc e_{d_k},\vc p_k + (1-s)N_k\ell_{(d_k)^\perp}\vc e_{d_k} \in \mathcal{C}(\mathcal{V}_{k-1},\vc p_k)	\right\}.
\end{align*}

We remark that, in both cases, one can repeat exactly the same proofs as in Section \ref{sec:conv-algor}, where the only difference here is that 
$$
\tilde{c}_A := \max\left\{p + (1-p)\max\{2^{-1},(1-\delta)\},(1-p) + p\max\{2^{-1},(1-\delta)\}\right\}\in(0,1).
$$
That means, this change deteriorates the rate of convergence in the case that $\delta>\frac12.$  
\medskip

\begin{figure}[htb]
  \centering
  \includegraphics[width=\linewidth,page=18]{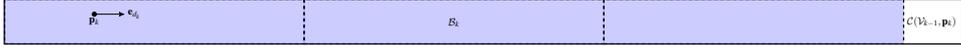}
  \caption{If $\delta l^j_{d_k^j}\geq l^{j}_{(d_k^j)^\perp}$, that is, in the
    degenerate case, in the Algorithms \ref{ModelA} and \ref{ModelB} we
    prescribed that we cover all of $D_k^j$. In order to simplify the numerical
    implementation we modify our construction to instead insert a maximal number of
    copies of the building blocks (in this example $N_k^j=3$).}
  \label{fig:Ncopies}
\end{figure}
\paragraph{\emph{Change 2:}} 
We define
$$
\Omega_1 = (0,1)\times(0,\delta),\qquad \Omega_2 = (0,\delta)\times(0,1),
$$
and we construct, following \cite{DPR}, two solutions $\vc z_1,\vc z_2\in W^{1,\infty}(\Omega_i;\R^2)\cap W^{1+s,p}(\Omega_i;\R^2)$ to 
\begin{align*}
\nabla \vc u &\in K \mbox{ a.e. in } \Omega_i, \\
\vc u & = \mt M \vc x \mbox{ on } \partial \Omega_i, 
\end{align*}
for $i=1,2$, and where $K$ is given by \eqref{defK_intro} and $\mt M\in K^{qc}$. Here, as in Theorem \ref{ThmCI}, $\theta_0\in(0,1)$ and $(s,p)\in [0,1)\times(1,\infty)$ are arbitrary and such that $sp<\theta_0$. Then, every time we have to make a replacement construction in the rectangle $\mathcal{B}$ (that is in one of the rectangles $\mathcal{B}_k^j$ in Model A, or in one of the rectangles $\mathcal{B}_k$ for Model B) we argue as follow:
\begin{itemize}
\item[(i)] if $\mathcal{B}=\vc c_0 + \lambda\Omega_i$ for some $\vc c_0 \in \R^2$, $\lambda\in(0,1]$, then we set $\vc y_k(\vc x) = \lambda  \vc z_i\left(\frac{\vc x-\vc c_0}{\lambda} \right)$ in $\mathcal{B}$. Indeed, we remark that, in $\mathcal{B}$, $\vc z_{\mathcal{B}}(\vc x):= \lambda \vc z_i\left(\frac{\vc x-\vc c_0}{\lambda} \right)$ satisfies $\nabla \vc z_{\mathcal{B}}\in K$ a.e., $\vc z_{\mathcal{B}}(\vc x) = \mt M\vc x$ on $\partial\Omega$. Therefore, the convergence of the model to the desired limiting stress free deformation is not affected by this change.

\item[(ii)] if $\mathcal{B}\neq \vc c_0 + \lambda\Omega_i$ for any $\vc c_0\in\R^2,\lambda\in(0,1]$, according to the changes to the model in the above paragraph, we have the existence of $\vc c_0\in\R^2,\lambda\in(0,1], N\in\mathbb{N}$ such that $\mathcal{B} = \bigcup_{n=0}^{N-1}\left(\vc c_0 + n\lambda \vc e_i + \lambda\Omega_i\right)$. In this case, as in the above one, we set $\vc y_k(\vc x) = \lambda \vc z_i\left(\frac{\vc x-\vc c_0- n\lambda \vc e_i}{\lambda} \right)$ for any $\vc c_0 + n\lambda \vc e_i + \lambda\Omega_i$ and $n=0,\dots,N-1.$ Again the convergence of the algorithm to the desired limiting stress free deformation is not affected by this change.
\end{itemize}
Regarding the regularity of the resulting microstructure, as in the proof of Theorem \ref{RegA}, we have to ensure that the modification of our construction still satisfies an estimate of the form 
\begin{align*}
\int\limits_{\Omega} \int\limits_{\Omega} \frac{|\nabla \mt v_k(\vc x)- \nabla \mt v_k(\vc y)|^p}{|\vc x - \vc y|^{2+sp}}\mathrm{d}\vc x \mathrm{d}\vc y 
&\leq \sum\limits_{j} \int\limits_{\mathcal{B}_k^j} \int\limits_{\mathcal{B}_k^j}  \frac{|\nabla \mt v_k(\vc x)-\nabla \mt v_k( \vc y)|^p}{|\vc x - \vc y|^{2+sp}} \mathrm{d}\vc x \mathrm{d}\vc y \\
& \quad + 2 \sum\limits_{j} \int\limits_{\mathcal{B}_k^j} \int\limits_{(\mathcal{B}_k^j)^c}  \frac{|\nabla\mt v_k(\vc x)- \nabla \mt v_k(\vc y)|^p}{| \vc x -  \vc y|^{2+sp}}\mathrm{d}\vc x \mathrm{d} \vc y \\
& \leq c \sum\limits_{j}\Per(\mathcal{B}_k^j)|\mathcal{B}_k^j|^{1-sp},
\end{align*} 
where, in our modified construction, we have to replace the old building blocks $\mathcal{B}_k^j$ by the blocks $\mathcal{B}$ described above. Since the second contribution is estimated ``generically'', not using properties of $\mt v_k$ (see the proof of Theorem \ref{RegA}), it suffices to discuss contributions of the form
\begin{align*}
 \int\limits_{\mathcal{B}} \int\limits_{\mathcal{B}}  \frac{|\nabla \mt v_k(\vc x)-\nabla \mt v_k( \vc y)|^p}{|\vc x - \vc y|^{2+sp}} \mathrm{d}\vc x \mathrm{d}\vc y .
\end{align*}

To this end, we consider the two cases (i), (ii) described above:
First, by stacking $N$ blocks of the microstructures on top of each other (that means in case $\mathcal{B} = \bigcup_{n=0}^{N-1}\left(\vc c_0 + n\lambda \vc e_i + \lambda\Omega_i\right)=:\bigcup\limits_{n=1}^{N-1} \Omega_i^n$), we have that for $\mt v_k:= \vc y_{k+1} - \vc y_k$ (cf. proof of Theorem \ref{RegA})
\begin{align*}
&\int_{\mathcal{B}}\int_{\mathcal{B}}\frac{|\nabla \mt v_{k}(\vc x) - \nabla \mt v_{k}(\hat{\vc x})|^p}{|\vc x - \hat{\vc x}|^{2+sp}}\,\mathrm{d}\vc x\,\mathrm{d}\hat{\vc x} \\
& = \sum\limits_{n=0}^{N-1} \int\limits_{\Omega_i^n}\int\limits_{\Omega_i^n}\frac{|\nabla \mt v_{k}(\vc x) - \mt v_{k}(\hat{\vc x})|^p}{|\vc x - \hat{\vc x}|^{2+sp}}\,\mathrm{d}\vc x\,\mathrm{d}\hat{\vc x} 
+ \sum\limits_{n=0}^{N-1}  \int\limits_{\mathcal{B}\setminus\Omega_i^n} \int\limits_{\Omega_i^n}\frac{|\nabla \mt v_{k}(\vc x) - \nabla \mt v_{k}(\hat{\vc x})|^p}{|\vc x - \hat{\vc x}|^{2+sp}}\,\mathrm{d}\vc x\,\mathrm{d}\hat{\vc x}\\
&\leq N \left|\nabla \vc z_i\left(\frac{\vc x-\vc c_0}{\lambda} \right)\right|^p_{\dot{W}^{s,p}(\vc c_0 + \lambda\Omega_i)} +c N \Per(\Omega_i^n)^{sp} |\Omega_i^n|^{1-sp}\\
&\leq N \left|\nabla \vc z_i\left(\frac{\vc x-\vc c_0}{\lambda} \right)\right|^p_{\dot{W}^{s,p}(\vc c_0 + \lambda\Omega_i)}
 + C\left( N \alpha \ell_d\right)^{sp}(|\mathcal{B}|)^{1-sp}\\
&\leq N \left|\nabla \vc z_i\left(\frac{\vc x-\vc c_0}{\lambda} \right)\right|^p_{\dot{W}^{s,p}(\vc c_0 + \lambda\Omega_i)} + C \left(\Per(\mathcal{B})\right)^{sp}(|\mathcal{B}|)^{1-sp}.
\end{align*}

But since 
\begin{equation}
\label{Scaling}
\begin{split}
&\int_{\lambda\Omega_i}\int_{\lambda\Omega_i}\frac{|\nabla \vc z_i (\lambda^{-1}\vc x) - \nabla \vc z_i (\lambda^{-1}\hat{\vc x})|^p}{|\vc x - \hat{\vc x}|^{2+sp}}\,\mathrm{d}\vc x\,\mathrm{d}\hat{\vc x} \\
& = \lambda^{2-sp}\int_{\Omega_i}\int_{\Omega_i}\frac{|\nabla \vc z_i (\vc x) - \nabla \vc z_i (\hat{\vc x})|^p}{|\vc x - \hat{\vc x}|^{2+sp}}\,\mathrm{d}\vc x\,\mathrm{d}\hat{\vc x}
\leq 
\lambda^{2-sp}\left|\nabla \vc z_i\right|^p_{W^{s,p}(\Omega_i)},
\end{split}
\end{equation}
and since in this case 
$$
N\lambda^{2-sp} = (N\lambda)^{sp}(N\lambda^2)^{1-sp}\leq c\left(\Per(\mathcal{B})\right)^{sp} |\mathcal{B}|^{1-sp},
$$ 
we obtain 
\begin{align*}
&\int_{\mathcal{B}}\int_{\mathcal{B}}\frac{|\nabla \mt v_k(\vc x) - \nabla \mt v_k (\hat{\vc x})|^p}{|\vc x - \hat{\vc x}|^{2+sp}}\,\mathrm{d}\vc x\,\mathrm{d}\hat{\vc x} \\
&\leq c(\Per(\mathcal{B}))^{sp}|\mathcal{B}|^{1-sp} \left|\nabla \vc z_i\right|^p_{W^{s,p}(\Omega_i)} + \left(\Per(\mathcal{B})\right)^{sp} |\mathcal{B}|^{1-sp}.
\end{align*}
We now notice that, by \eqref{Scaling}, also when $\mathcal{B}= \vc c_0 + \lambda\Omega_i$ we have 
$$
\left|\nabla \mt v_k\right|^p_{W^{s,p}(\mathcal{B})} \leq \lambda^{2-sp} \left|\nabla \vc z_i\right|^p_{W^{s,p}(\Omega_i)} \leq c|\mathcal{B}|^{1-sp}\left(\Per(\mathcal{B})\right)^{sp} \left|\nabla \vc z_i\right|^p_{W^{s,p}(\Omega_i)} .
$$
Therefore, we still infer \eqref{stima big} in the proof of Theorem \ref{RegA}, and hence we deduce the same regularity result as in Section \ref{sec:reg_sol} above also under this implementation of the model.

The outlined modifications of the algorithms thus have the \emph{computational advantage} that in the degenerate case it suffices to have the two ``standard'' convex integration solutions $\vc z_1, \vc z_2$ which can be inserted into the covering instead of having to produce new convex integration solutions for each aspect ratio. This is a substantial numerical improvement, since the production of the convex integration building blocks is the computationally most expensive part in our simulations. As explained this change does \emph{not} effect major changes in our theoretical regularity estimates for the solutions.

\subsection{Output of the simulations and comparison}

In this section, we present some of the output of our simulations of the described Algorithms \ref{ModelA} and \ref{ModelB}. 
As explained above, this combines a random covering by horizontal or vertical rectangles of a fixed width-to-length ratio $\delta>0$ and the filling of these by the convex integration building blocks from Theorem \ref{ThmCI}. For given boundary data $\mt M\approx \begin{pmatrix}
      0.939 & 0\\
      0 & 1.064
    \end{pmatrix}$ and $\gamma=0.5$,
     the horizontal and vertical convex integration building blocks are illustrated in Figures \ref{fig:buildingblocks} and \ref{fig:buildingblocks1}, respectively.

\begin{figure}[t]
\includegraphics[height= 4 cm]{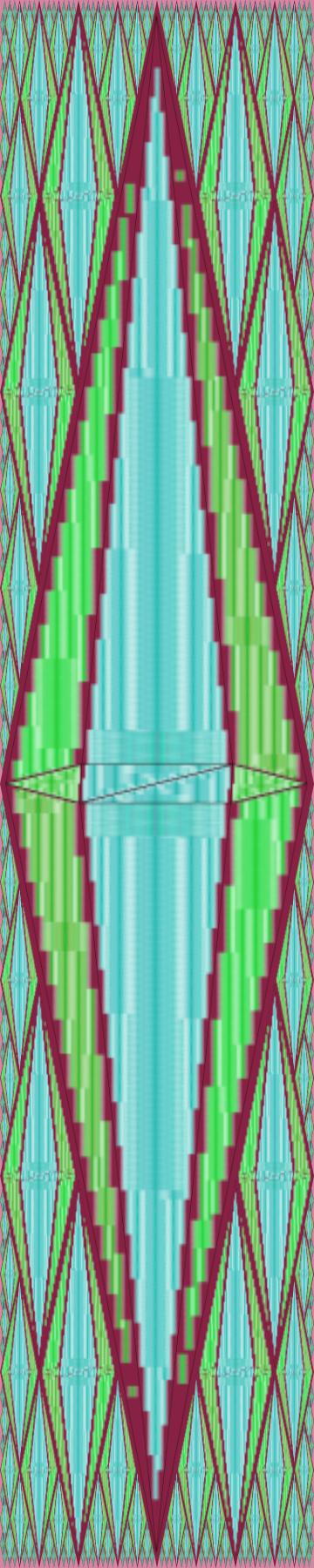}
\hspace{0.5 cm}
\includegraphics[height = 4cm]{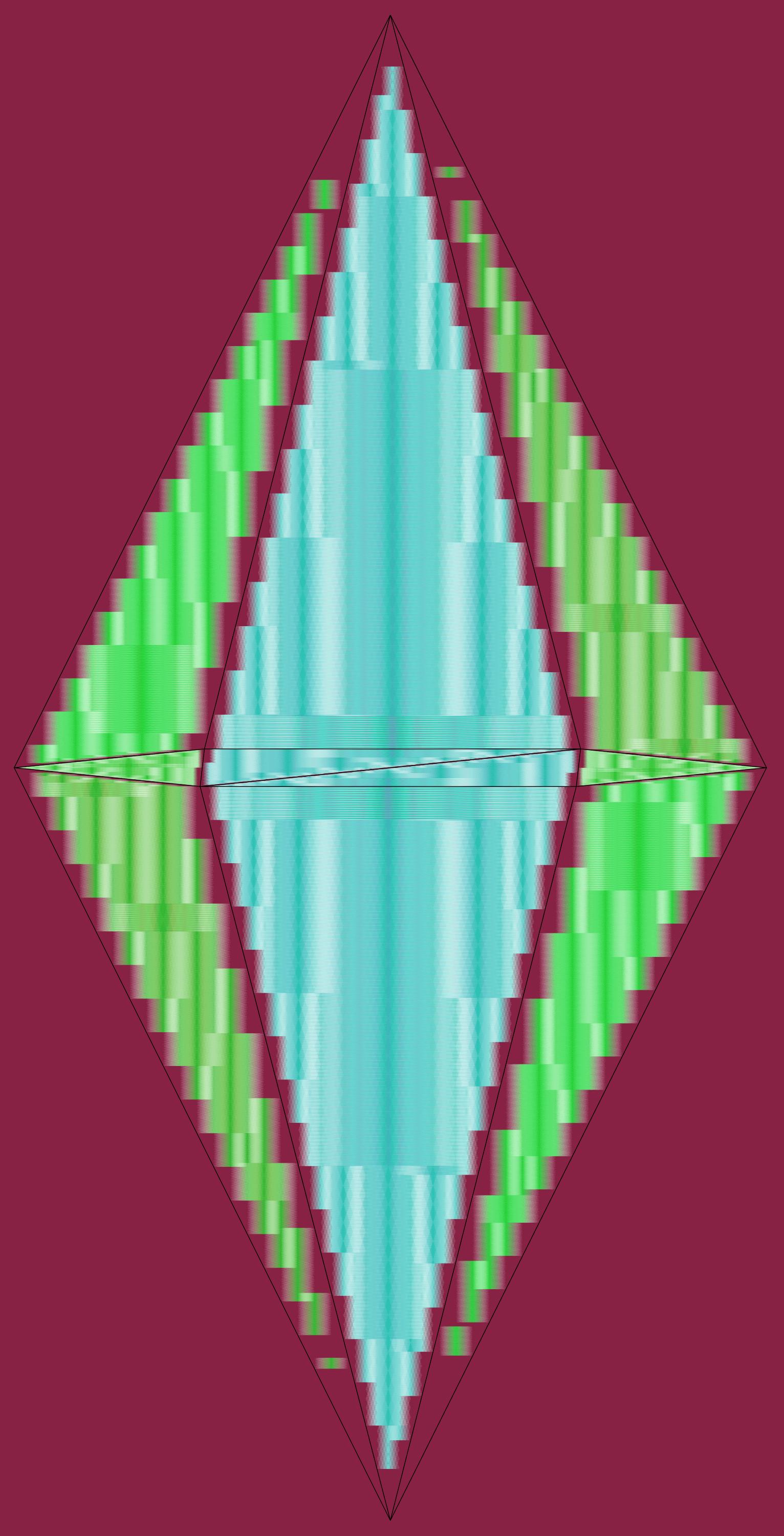}
\caption{The vertical building blocks: The thin needle structures in their actually used size (left) and a blown-up version of this (right). The illustration shows the building block after several iterations of the convex integration scheme. }
\label{fig:buildingblocks1}
\end{figure}

For the Algorithm \ref{ModelA} this yields structures as depicted in Figure \ref{fig:ModelA} left. Here fractal structures emerge similarly as in \cite{BCH15, CH18} and \cite{TIVP17} with the main difference that the use of convex integration building blocks results in a \emph{stress-free} solution in the limit $k \rightarrow \infty$ (which thus involves further fine length scales in the building blocks from Theorem \ref{ThmCI}).

\begin{figure}[ht]
\includegraphics[width=0.4 \textwidth]{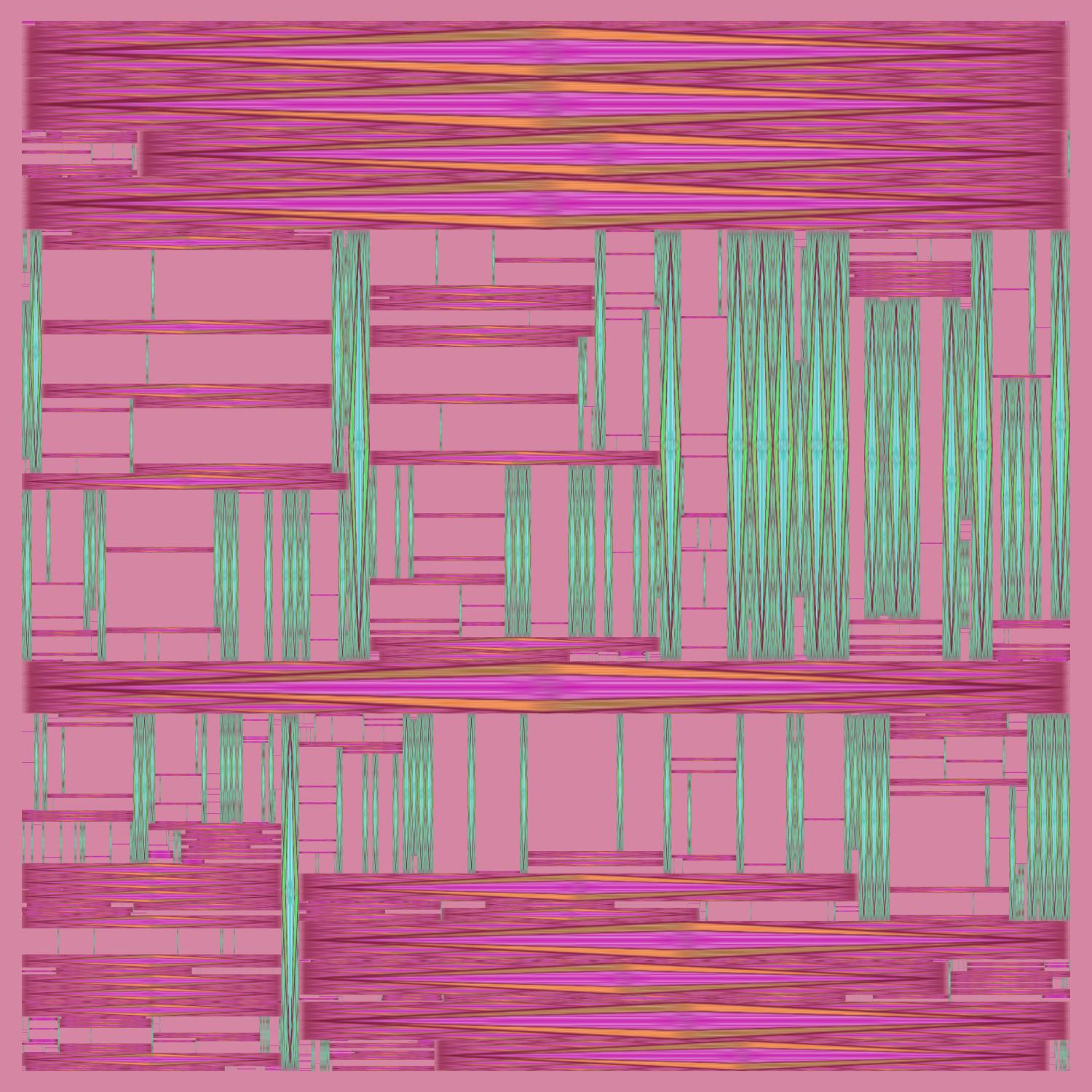}
\includegraphics[width=0.4 \textwidth]{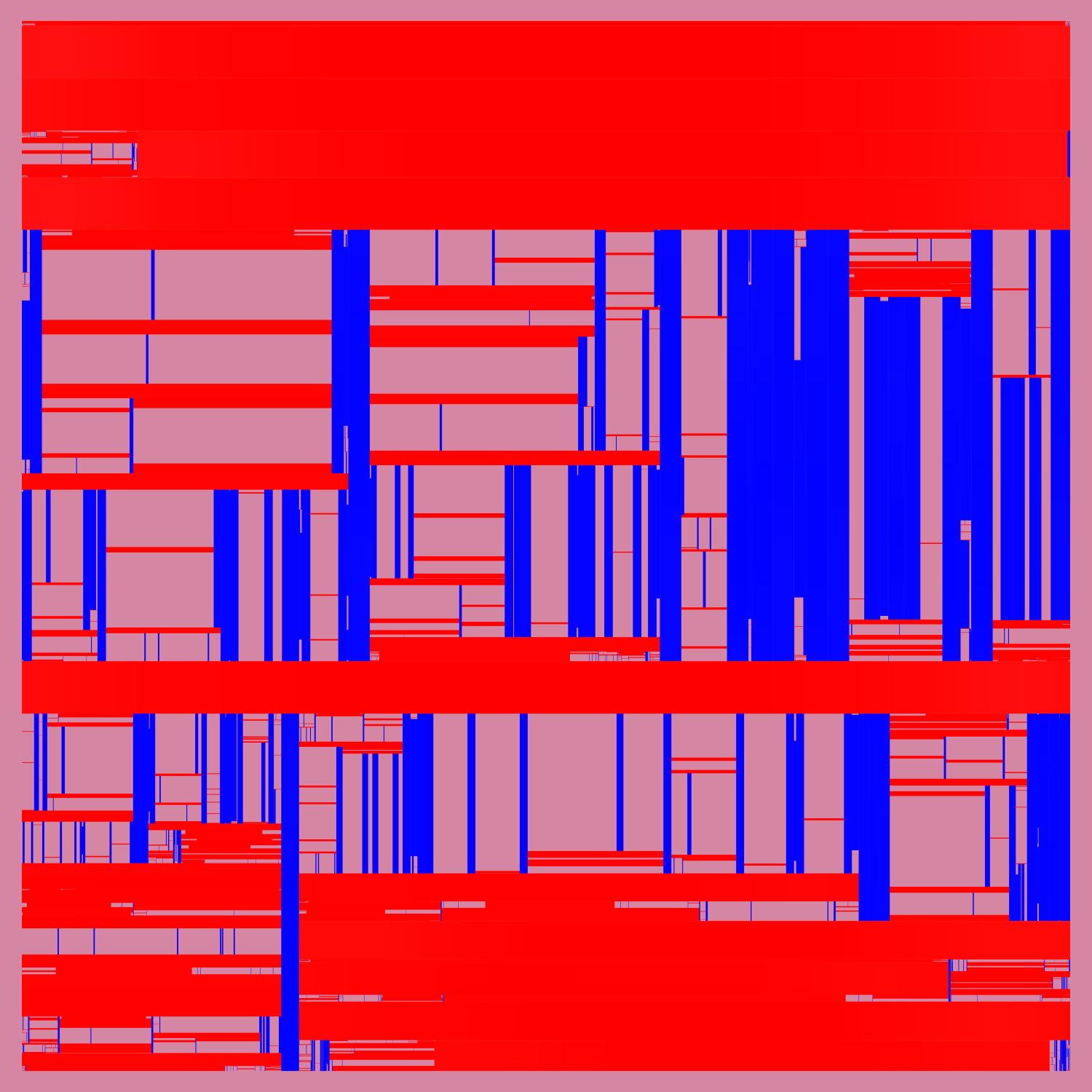}
 \caption{The random convex integration solution produced by Algorithm
   \ref{ModelA} (left) and a random packing without interior structure generated
   by the same random covering arguments as in Algorithm \ref{ModelA} (right)
   for $M\approx \diag(0.939, 1.064)$, $\gamma = 0.5$. 
By using the building blocks from Theorem \ref{ThmCI}, the structures which are obtained in the limit $k \rightarrow \infty$ of the Algorithm \ref{ModelA} become exactly stress-free solutions to the differential inclusion \eqref{eq:diff_incl1}. The illustration shows the microstructure after 11 iterations of the covering procedure. Thus, we have inserted roughly 2000 building block structures according to the iteration rules of Algorithm \ref{ModelA}. }
\label{fig:ModelA}
\end{figure}

For the Algorithm \ref{ModelB} we also obtain highly fractal structures (see Figure \ref{fig:ModelB}). As already observed in \cite{CH18}, after inserting the same number of rectangles, these however are more homogeneous than the ones from Algorithm \ref{ModelB}. Compared to the illustrations in \cite{CH18} this is still strongly observable but possibly slightly less pronounced in our illustrations than in \cite{CH18} due to the presence of a finite width.

\begin{figure}[ht]
\includegraphics[width=0.4 \textwidth]{StochCoverB.jpg}
\includegraphics[width=0.4 \textwidth]{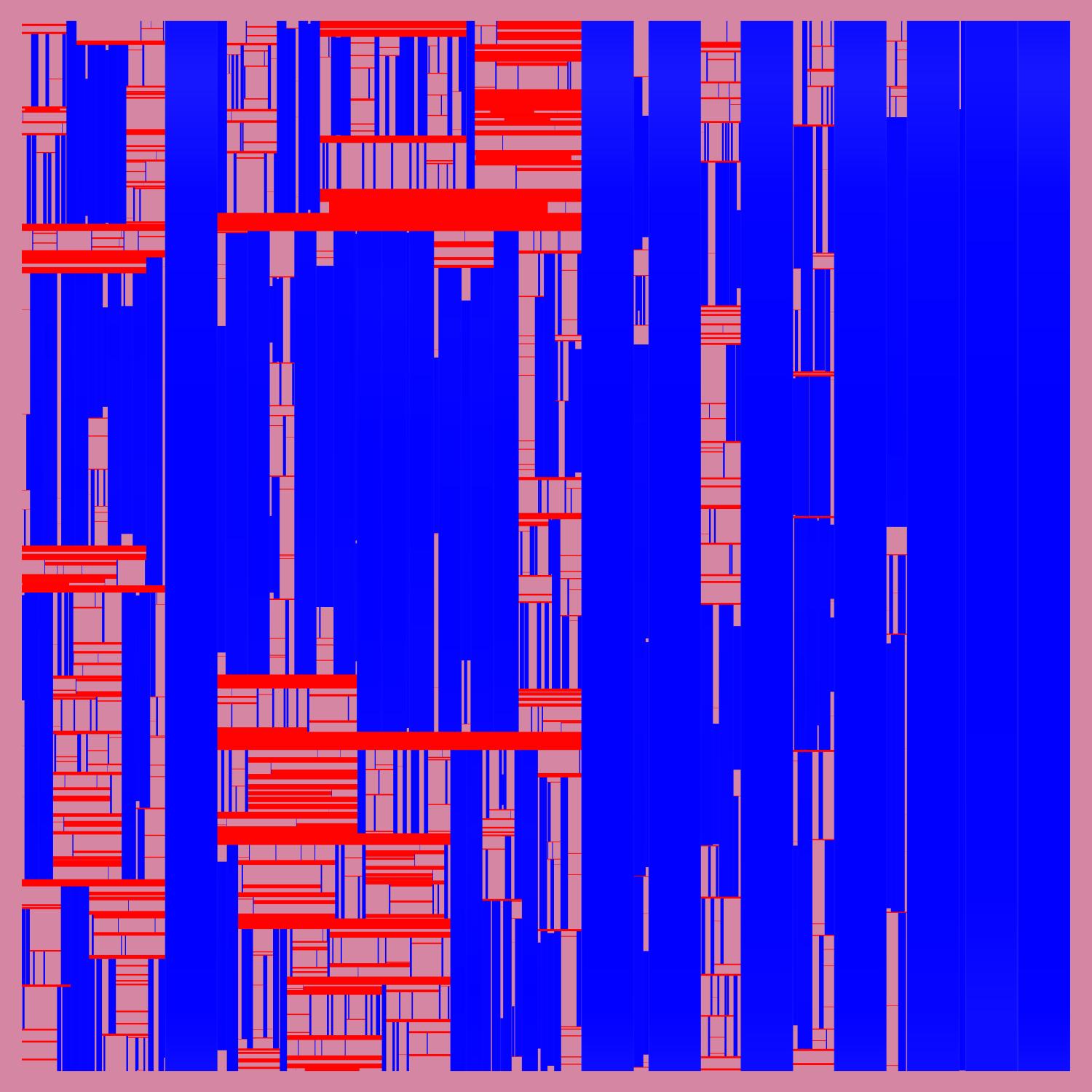}
\caption{The random convex integration solution for the boundary data 
$M\approx \diag(0.939, 1.064)$, $\gamma = 0.5$.
   produced by Algorithm \ref{ModelB} (left) and a random
   packing without interior structure generated by the same random covering
   arguments as in Algorithm \ref{ModelB} (right).
  As in the setting of the Algorithm \ref{ModelA} the fact that we rely on building blocks with convex integration structure implies that in the limit $k \rightarrow \infty$ the deformations are exactly stress-free solutions to the differential inclusion \eqref{eq:diff_incl1}. In the illustration here we have iterated the algorithm roughly 1000 times and have thus introduced roughly 1000 covering rectangles. Due to the iteration scheme of Algorithm \ref{ModelB} the covering boxes are distributed much more uniformly than in Algorithm \ref{ModelA} and, on average, cover a larger volume fraction of the domain after the same number of boxes have been introduced.}
\label{fig:ModelB}
\end{figure}

\subsection{Length scale statistics obtained in the Algorithms \ref{ModelA} and \ref{ModelB}}
\label{sec:sim_impl}

In order to eventually compare our results to the experimental data (see for instance \cite{VOMLRPP94,CMOPV98}, where universal exponents are obtained for each phase transformation), below we present length scale statistics of our solutions after a finite number of iterations of our algorithms. Here as a measure of the lengths we consider the long side of the rhombi-constructions as a respective measure. The distribution of the lengths involved in the random covering is analogous to the ones obtained in \cite{CH18} or \cite{TIVP17}. Further, finer length scales are however involved in the individual rhombi-building blocks (which themselves are obtained through iterative algorithms, see for instance \cite{DPR} or \cite{RZZ18}, which here are illustrated up to a third order iteration). Due to the use of the ``infinite iteration building blocks'' in the rhombi-constructions (see \cite{RTZ19} for this notation), we however remark that while these statistics may eventually serve as a comparison to the experimental data, they are not directly linked to the regularity exponents of the convex integration solutions as in the case of the ``finite convex iteration building blocks'' which had been discussed in \cite{RTZ19}.

\subsection{Combined length scale distributions}
\label{sec:distr}
A quantity that is of considerable experimental interest is the number and strength of acoustic emissions during the nucleation process \cite{PMLV13, VOMLRPP94}. It is believed that this is related to the length scale distribution of the microstructure which emerges upon nucleation \cite{CH18}. In order to eventually allow for comparisons of our theoretic findings with experimental results, we also analyze this quantity and present some numerical experiments on its computation.

In our algorithms essentially two length scale distributions enter in the computation of the overall length scale distribution: On the one hand, we consider the lengths scales of the outer random packing (this essentially has the same distribution as the length scale distribution from \cite{TIVP17}). On the other hand these also have an internal length scale distribution, since there is structure also within our building blocks. The inner structure in turn is again determined by two lengths scale distributions which consist of a covering of the given domain by model rectangles and the covering of the model rectangles by the rhombi-constructions.

A heuristic computation shows that in general the length scale distribution will be given by a competition of the involved length scales. We next discuss this for the case of two nested scales: Let us assume that for $y \in (0,1)$ the length scale distribution of the outer blocks is described by a function $f(y)$ which counts the number of blocks (of ratio $\delta$) of size $y$ (in this back-of-the-envelope calculation we exclude the degenerate setting). Let us further suppose that each block of the size $(0,1)\times (0,\delta)$ has an interior length scale distribution modelled by a function $g(x)$ which assigns to every structure of length scale $x$ the number $g(x)$ of such scales (again we assume that there is a certain non-degeneracy of our structures here). Thus, for every fixed outer structure of the size $(0,y)\times (0,\delta y)$ (up to translation), by scaling, the inner structure has a length scale distribution given by $g(\frac{x}{y})$.

As a consequence, the overall number of structures of lengths $x$ are roughly given by $\int\limits_{0}^1 f(y) g(\frac{x}{y}) dy$. If both distributions $f, g$ are power laws, e.g. $f(y) = y^{\alpha}$, $g(x) = x^{\beta}$, we thus infer that
\begin{align}
\label{eq:lengths}
\int\limits_{0}^1 f(y) g\left(\frac{x}{y} \right) dy
= c(\alpha,\beta) x^{\beta}(1- x^{\alpha+1-\beta}) \sim \max\{x^{\alpha + 1}, x^{\beta}\}.
\end{align}

Numerically, we observe that, indeed, at least at our finite numerical resolution of the random convex integration scheme, the length scales can be well-approximated by power laws: Considering the boundary data
\begin{align*}
\mt M \approx \begin{pmatrix}
      0.939 & 0\\
      0 & 1.064
    \end{pmatrix},
\end{align*}
and $\gamma=0.5$ we obtain the following length scale distributions:
\begin{itemize}
\item[(i)] The histrogram of the length scale distribution inside a diamond building block (as in Figure \ref{fig:buildingblocks1}) is shown in Figure \ref{fig:histoinner}. A least square fit gives $g(x)\sim C x^{-2.107}$. 
\item[(ii)] Using this diamond we dyadically fill a rectangle as in Figure \ref{fig:dyadicpack}, which yields a length scale distribution $f(y)\sim C y^{-1}$ (see Lemma \ref{lem:packing} below for a more detailed argument). 
\item[(iii)] These rectangles then serve as building blocks in our stochastic packing, which itself exhibits length scale distribitions $h_A(z) \sim C z^{-1.49}$ and $h_B(z) \sim C z^{-1.48}$ (see Figure \ref{fig:histostoch}) for the Algorithms \ref{ModelA} and \ref{ModelB}, respectively.
\end{itemize}
The combination of the first two items (i), (ii) provides the overall length scale distribution of the building block constructions from Theorem \ref{ThmCI}.

Inserting the described distributions into (an iterated version of) \eqref{eq:lengths}, we 
 obtain an overall power law with exponent $\alpha = -2.107$. It is thus the length scale of the convex integration building blocks which dominate the overall length scales in our model.

\begin{figure}
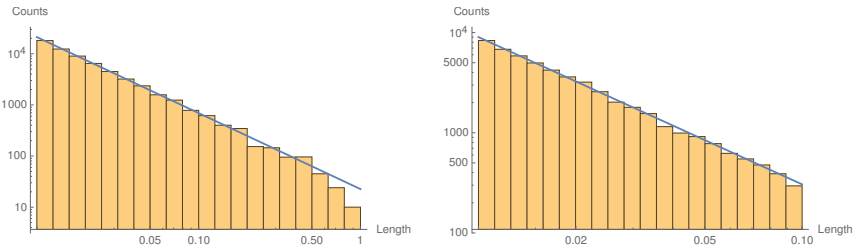

\begin{center}
\begin{subfigure}[b]{0.45\textwidth}
\includegraphics[width=0.95\textwidth,page=20]{figures.pdf}
\caption{Histogram of length scales from Algorithm \ref{ModelA}, with fitting $\frac{22.55}{x^{1.486}}$. An average volume fraction of approximately $0.914$ was covered over these realisations.}
\end{subfigure}
\begin{subfigure}[b]{0.45\textwidth}
\includegraphics[width=0.95\textwidth,page=21]{figures.pdf}
\caption{Histogram of length scales from Algorithm \ref{ModelB}, with fitting $\frac{10.34}{x^{1.470}}$. An average volume fraction of approximately $0.926$ was covered over these realisations.}
\end{subfigure}
\end{center}
\caption{Histograms of length scales of inclusions from the Algorithms \ref{ModelA}, \ref{ModelB} on a rectangle of aspect ratio $0.05$. The particular implementation is exact, in the sense that the algorithm terminates only when it is impossible to generate an inclusion of length greater than $10^{-2}$, which is the range shown. In both cases, the histograms are generated over 10 realisations. Fitting curves are found by least squares regression of the histograms, using only the data range of $10^{-1}$ to $10^{-2}$ to avoid noise during the burn-in of the algorithm.}
\label{fig:histostoch}
\end{figure}

\begin{figure}
\centering
\includegraphics[width=0.6\textwidth,page=22]{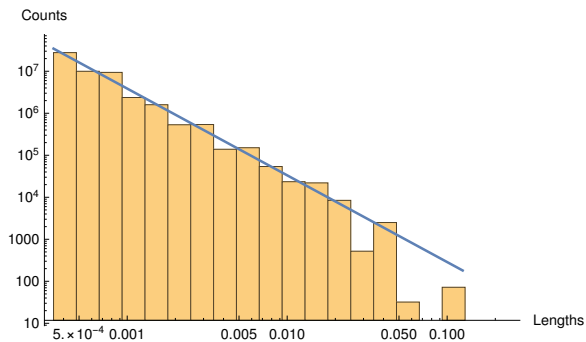}
\caption{Histogram of length scales for the convex integration algorithm using the matrix 
$M\approx \diag(0.939, 1.064)$, $\gamma = 0.5$.
This corresponds to the positive definite square root of $\mt C_1$ for the parameters $\lambda,\mu=0.1$ in the construction of \cite{DPR}. The histogram is shown with a fit of the form $\frac{2.47}{x^{2.107}}$, obtained by linear regression on the log-log histograms, using only data between $3.5\times 10^{-4}$ to $10^{-2}$ to avoid noise from burn-in. The data is exact on the data shown, in the sense that the implementation only terminates when all possible inclusions of length scale greater than $3.5\times 10^{-4}$ have been generated.}
\label{fig:histoinner}
\end{figure}

Although, in general, it is not yet proven that the functions $f(y)$ and $g(x)$ must be power laws, we conclude from the back-of-the-envelope computation from above, that the details of the interior structure and thus of the \emph{compatibility} requirement has an interesting, measureable impact on the experimentally measured length scale distributions in the described covering algorithms. In particular including compatibility thus provides important new and experimentally measurable information on the models of \cite{BCH15,CH18,TIVP17}.

\begin{appendix}
\section{A covering result}

Last but not least, for completeness, we discuss the covering result used in (ii) in Section \ref{sec:distr}.

\begin{lem}
\label{lem:packing}
Consider an axis-parallel rectangle $R$ of lengths $1:\delta$ and its greedy covering by dyadically rescaled copies of our diamond domain (illustrated in Figure \ref{fig:dyadicpack}).
Then for any $n\geq 1$ there are $8 \cdot 2^n$ diamonds with length scale $4^{-1}2^{-n+1}$.
\end{lem}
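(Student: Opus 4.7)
The plan is to proceed by induction on the level $n$, exploiting the self-similar fractal structure produced by the greedy dyadic packing. The central conceptual point is that the greedy algorithm applied to the triangular gaps left behind at each level should behave like a scaled copy of the algorithm applied to those gaps at the previous level, so that the counting reduces to a single doubling identity.

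First I would establish the base case $n=1$ by direct inspection of the largest diamonds that can be inscribed greedily in $R$. These have length scale $4^{-1}\cdot 2^0 = 1/4$, and by explicit counting (in line with Figure \ref{fig:dyadicpack}) one verifies that exactly $16 = 8\cdot 2^1$ such diamonds fit, after which the uncovered complement of $R$ is a disjoint union of 16 congruent triangular gaps, each similar to a canonical ``corner triangle'' associated to inscribing the diamond in its bounding rectangle.

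For the inductive step, I would formulate the following self-similarity claim: after level $n$, the uncovered region is a disjoint union of $8\cdot 2^n$ triangular gaps, each geometrically similar to the canonical corner triangle from the base case, rescaled by the factor $2^{-(n-1)}$. The greedy step applied inside each such triangle inscribes exactly one diamond of half the previous length, namely $4^{-1}\cdot 2^{-n}$, and splits the parent triangle into two similar triangles of half size. This produces $2\cdot(8\cdot 2^n)=8\cdot 2^{n+1}$ new diamonds and the same number of new triangular gaps at level $n+1$, thereby closing the induction on both the count and the prescribed length scale.

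The main obstacle is verifying the geometric self-similarity claim rigorously: one must check that the greedy choice inside each triangular gap really is a scaled copy of the diamond from one level up, and that inscribing it splits the gap into exactly two congruent sub-triangles of half the linear size with no additional residual region. This essentially reduces to an explicit affine computation in coordinates adapted to the rhombi-construction of \cite{DPR}, confirming that the maximal inscribable diamond in each such triangle is unique and exhibits the required halving of scale. Once this local geometric lemma is established, the conclusion follows immediately by iterating the identity $N_{n+1}=2N_n$ from the base value $N_1=16$, giving $N_n = 8\cdot 2^n$ diamonds of length $4^{-1}2^{-n+1}$ as claimed.
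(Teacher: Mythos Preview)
Your inductive approach via self-similarity is essentially the same idea as the paper's: both arguments exploit that the residual triangular gaps reproduce themselves at half the linear scale, so the diamond count doubles from level to level. The paper phrases this as a tree structure rooted at a small collection of initial diamonds rather than as a formal induction, but the content is identical.

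There is, however, a geometric slip in your description of the base case. The greedy covering does not begin with diamonds of scale $1/4$: the very first step inscribes a \emph{single} large diamond of length scale $1$ in the centre of $R$, covering half the total area and leaving four congruent triangular corners. (This central diamond is not counted by the lemma, which only addresses $n\geq 1$.) According to the paper, into each of these four corner triangles one then places \emph{two} diamonds rescaled by the factor $1/4$, giving $8$ diamonds at scale $1/4$ --- not $16$ as you assert. It is from each of these $8$ ``root'' diamonds that the binary tree then grows. So you should not describe the scale-$1/4$ diamonds as ``the largest diamonds that can be inscribed greedily in $R$'', and you need to revisit the count against the figure. Once the correct initial configuration (one scale-$1$ diamond, then the corner diamonds of scale $1/4$) is in hand, your doubling induction goes through unchanged. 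Note also that there is an indexing imprecision between the lemma's formula $8\cdot 2^n$ and the proof's explicit count of $8$ at what it calls ``the case $n=1$''; the self-similar doubling is the substantive content either way, and your induction captures that correctly.
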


\begin{figure}
\centering
\includegraphics[width=0.5\textwidth,page=19]{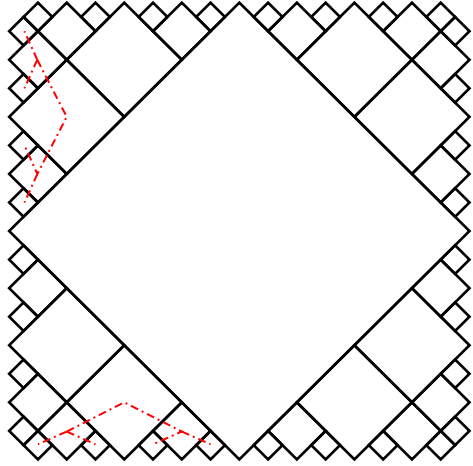} 
\caption{Example of the dyadic packing, showing two lineages.}
\label{fig:dyadicpack}
\end{figure}

\begin{proof}
We note that the largest diamond in the center has length scale $1$ and covers $\frac{1}{2}$ of the total volume.
In each of the four remaining regions we then insert two copies of diamond rescaled by a factor $\frac{1}{4}$, which corresponds to the case $n=1$.
As illustrated in Figure \ref{fig:dyadicpack} starting from each of these $8$ diamonds we obtain a tree-like structure of $2^n$ diamonds rescaled by a factor $\left(\frac{1}{2}\right)^n$, which concludes the proof.

We remark that by the same argument at step $N$ we have covered a total volume
\begin{align*}
\frac{1}{2} + \frac{1}{2}\sum_{n=1}^N 2^n 4^{-n} = 1 - 2^{-N-2},
\end{align*}
while the size of the boundary grows proportionally to $N$.
\end{proof}

\end{appendix}

\section*{Acknowledgements}

Francesco Della Porta and Angkana Rüland would like to thank the MPI MIS where part of this work was carried out. Angkana Rüland gratefully acknowledges funding by the Deutsche Forschungsgemeinschaft (DFG, German Research Foundation) -- project number RU 2049/1-1 within the SPP 2256 ``Variational Methods for Predicting Complex Phenomena in Engineering Structures and Materials''. She is a member of the Heidelberg STRUCTURES Cluster of Excellence (which is part of Germany’s Excellence Strategy, EXC-2181/1 - 390900948).
Jamie M. Taylor has been partially supported by the Basque Government through the BERC 2018-2021 program; and by Spanish Ministry of Economy and Competitiveness MINECO through BCAM Severo Ochoa excellence accreditation SEV-2017-0718 and through project MTM2017-82184-R funded by (AEI/FEDER, UE) and acronym ``DESFLU".
The work was partially written while Christian Zillinger was at BCAM. During that time his research was supported by the ERCEA under the grant 014 669689-HADE and also by the Basque Government through the BERC 2014-2017 program and by Spanish Ministry of Economy and Competitiveness MINECO: BCAM Severo Ochoa excellence accreditation SEV-2013-0323. Christian Zillinger gratefully acknowledges support by the DFG through the CRC 1173.

\bibliographystyle{alpha}
\bibliography{citations1}

\end{document}